\newtheorem{theorem}{Theorem}[section]
\newtheorem{lemma}[theorem]{Lemma}
\theoremstyle{definition}
\newtheorem{example}[theorem]{Example}
\newtheorem{prop}{Proposition}
\newtheorem{corollary}{Corollary}
\newtheorem{hypothesis}{Conjecture}
\newtheorem{rem}{Remark}
\theoremstyle{remark}
\numberwithin{equation}{section}
\renewcommand{\Re}{{\rm Re}}
\newcommand{\ii}{{\rm i}}
\newcommand{\ee}{{\rm e}}
\newcommand{\1}{\mathbbm{1}}
\renewcommand{\d}{{\rm d}}
\newcommand{\R}{\mathbb{R}}
\newcommand{\C}{\mathbb{C}}
\newcommand{\N}{\mathbb{N}}
\newcommand{\Q}{\mathbb{Q}}
\newcommand{\E}{\mathbb{E}}
\newcommand{\Z}{\mathbb{Z}}
\newcommand{\Za}{\mathbb{Z}^{\ast}}
\newcommand{\od}{\overset{{\rm d}}{=}}
\newcommand{\todistr}{\overset{{\rm d}}{\longrightarrow}}
\newcommand{\toas}{\overset{{\rm a.s.}}{\longrightarrow}}
\begin{document}

\title[Almost periodic stochastic processes]{Almost periodic stochastic processes with applications to analytic number theory}

\author{Alexander Iksanov}
\address{Faculty of Computer Science and Cybernetics, Taras Shevchenko National University of Kyiv, Kyiv 01601, Ukraine}
\email{iksan@univ.kiev.ua}

\author{Zakhar Kabluchko}

\address{Institute of Mathematical Stochastics, Department of Mathematics and Computer Science, University of M\"{u}n\-ster, Orl\'{e}ans-Ring 10, D-48149 M\"{u}n\-ster, Germany}
\email{zakhar.kabluchko@uni-muenster.de}

\author{Alexander Marynych}
\address{Faculty of Computer Science and Cybernetics, Taras Shevchenko National University of Kyiv, Kyiv 01601, Ukraine}
\curraddr{School of Mathematical Sciences, Queen Mary University of London, Mile End Road, London E14NS, United Kingdom}
\email{marynych@knu.ua,o.marynych@qmul.ac.uk}

\subjclass[2020]{Primary: 11N37, 60F17; secondary: 60G10, 11M26}


\dedicatory{This paper is dedicated to the anniversaries of Professors A.~V.~Skorokhod, V.~S.~Korolyuk and Yu.~V.~Kozachenko}

\keywords{Almost periodic functions; Ergodic stochastic process; Functional limit theorem; Mertens conjecture; M\"{o}bius function; stationary process; von Mangoldt function; zeros of zeta-function; zeta-function}

\begin{abstract}
A classical fact of the theory of almost periodic functions is the existence of their asymptotic
distributions. In probabilistic terms, this means
that if $f$ is a Besicovitch almost periodic function and
$V$ is a random variable uniformly distributed on $[-1,1]$, then the random variables $f(L\cdot V)$ converge in distribution, as $L\to\infty$, to a proper 
non-degenerate random variable. We prove
a functional extension of this result for the random processes $(f(L\cdot V+t))_{t\in\mathbb{R}}$ in the space of Besicovitch almost periodic functions, and also in the sense of weak convergence of finite-dimensional distributions.  We further investigate the properties of the limiting stationary process and demonstrate applications in analytic number theory by extending the one-dimensional results of [Limiting distributions of the classical error terms of prime number theory, Quart. J. Math. 65 (2014), 743--780] and earlier works.
\end{abstract}

\maketitle

\section{Introduction}

The theory of almost periodic functions, inspired by the seminal contribution of Bohr~\cite{Bohr}, has attracted enormous attention due to various applications in dynamical systems, ergodic theory, number theory and other fields. Since the appearance of Bohr's original definition of almost periodic functions, nowadays referred to as {\it uniformly almost periodic functions}, many generalizations have been introduced with the most 
widespread being Stepanov's, Weyl's and Besicovitch's almost periodic functions. All these spaces of almost periodic functions can be obtained through the procedure of completion of the set of trigonometric polynomials
$$
\mathcal{T}:=\left\{\sum_{k=1}^{n}a_k\ee^{\ii \lambda_k t}\,:\,n\in\mathbb{N},\; a_k\in\mathbb{C},\; \lambda_k\in\mathbb{R},\; \lambda_i\neq\lambda_j\;\text{for}\;i\neq j\right\}
$$
with respect to various norms. For example, completion of $\mathcal{T}$ with respect to the uniform norm $\|f\|_{\infty}:=\sup_{t\in\mathbb{R}}|f(t)|$ 
gives a closed subspace of the space $C_b(\mathbb{R},\mathbb{C})$ of continuous bounded complex-valued functions, which coincides with the space of Bohr's uniformly almost periodic functions. In what follows we denote this space by $U$. Completion of $\mathcal{T}$ with respect to the Marcinkiewicz seminorm, see~\eqref{eq:marcinkiewicz_norm} below, leads to the space $B_2$ of Besicovitch's almost periodic functions, a basic object for the present paper.

Fix a probability space $(\Omega,\mathcal{F},\mathbb{P})$. Given the space $U$, or any other space $S$ of almost periodic functions defined via the completion procedure described above, we introduce its Borel $\sigma$-algebra and define any measurable mapping from $\Omega$ to $S$ as an \emph{almost periodic stochastic process}. To the best of our knowledge, the earliest work on almost periodic stochastic processes dates back to the late 1930s, specifically Slutsky's paper~\cite{Slutsky:1938}, where the author investigated what can now be recognized as Besicovitch’s almost periodic processes. These were expressed as convergent (Fourier) series of generalized trigonometric polynomials with random coefficients. In~\cite{Hunt}, Hunt studied random Fourier transforms and derived fundamental sample path properties for Fourier series with independent, centered random coefficients, assuming that the series were square-\-summ\-able. As we shall see, such Fourier series represent certain functional averages of Besicovitch’s almost periodic functions. A comprehensive treatment of random Fourier series can be found in Chapter 5 of~\cite{Kahane:1985}. Alternative approaches to almost periodicity in the context of stationary stochastic processes are discussed in~\cite{Gladyshev:1963} and other sources. For an in-depth exploration of so called $p$-th mean almost periodic random processes, the monograph~\cite{Bezandry+Diagana:2011} serves as an excellent resource.

A motivation for an investigation of almost periodic processes comes from analytic number theory. Arithmetic functions are deterministic. Nevertheless, many of these admit limiting distributions (in a sense to be defined below). Knowing this it is natural to ask for functional limit theorems, that is, for the existence of limiting processes. As we shall demonstrate in Section~\ref{sec:examples}, the limiting processes for several classical arithmetic functions turn out to be stationary almost periodic processes with a discrete spectrum.

\section{A brief reminder on almost periodic functions}

For the purpose of the present paper we recall briefly a construction of the space $B_2$ of Besicovitch's almost periodic functions, which are our main tools. On this path we follow a comprehensive source~\cite{Corduneanu}, see Section 2.5 therein.

\subsection{Besicovitch almost periodic functions}

Let $L_2^{{\rm loc}}(\mathbb{R},\mathbb{C})$ denote the space of locally square-integrable measurable complex-valued functions defined on $\mathbb{R}$. The mapping $\|\cdot\|_{M_2}:L_2^{{\rm loc}}(\mathbb{R},\mathbb{C})\mapsto [0,+\infty]$ given by
\begin{equation}\label{eq:marcinkiewicz_norm}
\|f\|_{M_2}:=\limsup_{L\to+\infty}\left(\frac{1}{2L}\int_{-L}^L |f(t)|^2 {\rm d}t\right)^{1/2}
\end{equation}
is a semi-norm on the set $M_2(\mathbb{R},\mathbb{C}):=\{f\in L_2^{{\rm loc}}(\mathbb{R},\mathbb{C})\,:\,\|f\|_{M_2}<\infty\}$. The set $Z:=\{f\in M_2(\mathbb{R},\mathbb{C}):\|f\|_{M_2}=0\}$ is closed with respect to $\|\cdot\|_{M_2}$. Hence, the quotient
$$
\mathcal{M}_2(\mathbb{R},\mathbb{C}):=M_2(\mathbb{R},\mathbb{C})/Z
$$
is a normed vector space with respect to the norm $\|f+Z\|_{M_2}:=\|f\|_{M_2}$. This space is complete and known in the literature as a {\it Marcinkiewicz space}, see Propositions 2.18 and 2.19 in~\cite{Corduneanu}. Similarly to the $L_p$ spaces, with a slight abuse of notation, we shall think of elements in $\mathcal{M}_2(\mathbb{R},\mathbb{C})$ as functions even though these elements are in fact equivalence classes. Note that two functions in the same class may take different values everywhere on sets of positive measure and even on sets of infinite measure. For example, the function $t\mapsto \ee^{-|t|}$ belongs to $Z$ and is therefore equivalent to the zero function.

The Besicovitch space $B_2$ is defined as closure of $\mathcal{T}$ (or, more precisely, of $\mathcal{T}+Z$) in $\mathcal{M}_2(\mathbb{R},\mathbb{C})$. The space $B_2$ is complete but it is not separable. Indeed, $\{t\mapsto \ee^{\ii \lambda t}: \lambda\in\mathbb{R}\}\subset B_2$ is an uncountable set such that $\|\ee^{\ii \lambda_1 t}-\ee^{\ii \lambda_2 t}\|_{M_2}=\sqrt{2}$ if $\lambda_1\neq\lambda_2$. It is clear that the space $U$ of uniformly almost periodic functions is a dense (with respect to $\|\cdot\|_{M_2}$) subspace of $B_2$.

\subsection{Mean values and Fourier series}
Let $V_{-1,1}$ be a random variable defined on the probability space $(\Omega,\mathcal{F},\mathbb{P})$, which has 
the uniform distribution on $[-1,\,1]$. Put $V_{-L,L}:=L\cdot V_{-1,1}$, so that $V_{-L,L}$ is uniformly distributed on $[-L,L]$. It is known that any $f\in B_2$ possesses a mean value $M(f)$ defined by
\begin{equation}\label{eq:mean_value_def}
M(f):=\lim_{L\to+\infty}\frac{1}{2L}\int_{-L}^{L}f(t){\rm d}t=\lim_{L\to+\infty}\mathbb{E}[f(V_{-L,L})].
\end{equation}
According to Theorem 2.9 in~\cite{Akbary+Ng+Shahabi:2014}, see also Theorem 7 in~\cite{Duy:2013}, a more general fact holds true, namely, for any globally Lipschitz and globally bounded function $g:\mathbb{C}\to \mathbb{C}$, the limit
\begin{equation}\label{eq:gf_limit_mean_value}
\lim_{L\to+\infty}\mathbb{E}[g(f(V_{-L,L}))].
\end{equation}
exists. In other words, the following convergence in distribution holds
\begin{equation}\label{eq:one_dim_convergence}
f(V_{-L,L})~\todistr~\mathcal{M}_f(0),\quad L\to+\infty,
\end{equation}
where $\mathcal{M}_f(0)$ is a complex random variable with the distribution implicitly defined by
$$
\mathbb{E}[g(\mathcal{M}_f(0))]:=\lim_{L\to+\infty}\mathbb{E}[g(f(V_{-L,L}))].
$$
Since $U$ is a subset of $B_2$,~\eqref{eq:one_dim_convergence} obviously holds true also for $f\in U$.

Note that the aforementioned Theorem 2.9 in~\cite{Akbary+Ng+Shahabi:2014} is, in fact, an elementary consequence of the following observation. If $f\in B_2$, then $g\circ f\in B_2$ for any bounded globally Lipschitz function $g$ as follows immediately from the equivalent definition of the Besicovitch space $B_2$ using translation numbers, see p.~78 in the seminal Besicovitch book~\cite{Besicovitch}. Therefore, the limit in~\eqref{eq:gf_limit_mean_value} exists and is equal to $M(g\circ f)$.

Let $a\in\mathbb{R}$ be fixed and put $f(a;t):=f(t)\ee^{-\ii a t}$ for $t\in\mathbb{R}$. According to~\eqref{eq:mean_value_def}, the limit
$$
F_f(a):=M(f(a;t)):=\lim_{L\to+\infty}\frac{1}{2L}\int_{-L}^{L}f(t)\ee^{-\ii a t}{\rm d}t
$$
exists. This limit is non-zero for at most countably many $a\in\mathbb{R}$, see Proposition 4.1 in~\cite{Corduneanu}. The set
$$
\mathcal{S}(f):=\{a\in\mathbb{R}:\,F_f(a)\neq 0\}
$$
is called the Fourier spectrum of $f$, and the numbers $\{F_f(a)\,:\,a\in \mathcal{S}(f)\}$ are called the Fourier coefficients of $f$. The formal series $\sum_{a\in \mathcal{S}(f)}F_f(a)\ee^{\ii a t}$
is called the Fourier series of $f$ and the pairs $\{(a,F_f(a))\,:\,a\in \mathcal{S}(f)\}$ uniquely define $f$ (or, more precisely, $f+Z$), see p.~104 in~\cite{Corduneanu}. The notation
$$
f(t)~\sim~\sum_{a\in \mathcal{S}(f)}F_f(a)\ee^{\ii a t}
$$
is used to denote that $f$ has the formal Fourier series $\sum_{a\in \mathcal{S}(f)}F_f(a)\ee^{\ii a t}$. The Fourier coefficients of $f\in B_2$ satisfy a Parseval equality
\begin{equation}\label{eq:square_suumable}
\sum_{a\in\mathcal{S}(f)}|F_f(a)|^2=M(|f|^2)=\|f\|^2_{M_2}<\infty.
\end{equation}
It can be checked that the Fourier spectrum of a trigonometric polynomial $\sum_{k=1}^{n}a_k\ee^{\ii \lambda_k t}$ with $a_k\neq 0$, $k=1,\ldots,n$, is $\{\lambda_1,\ldots,\lambda_n\}$.

\subsection{Besicovitch functions with a restricted spectrum}
For a subset $A\subset\mathbb{R}$, put
$$
\mathcal{T}(A):=\{f\in\mathcal{T}\,:\,\mathcal{S}(f)\subseteq A\}=\left\{\sum_{k=1}^{n}a_k\ee^{\ii \lambda_k t}\,:\,n\in\mathbb{N},\;a_k\in\mathbb{C},\;\lambda_k\in A,\;\lambda_i\neq\lambda_j\;\text{for}\;i\neq j\right\}
$$
and let $B_2(A)$ be the closure of $\mathcal{T}(A)$ in $B_2$ with respect to $\|\cdot\|_{M_2}$.

\begin{prop}
For every $A\subseteq\mathbb{R}$, $B_2(A)=\{f\in B_2:\,\mathcal{S}(f)\subseteq A\}$.
\end{prop}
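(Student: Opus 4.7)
The plan is to prove the two inclusions separately. For $B_2(A)\subseteq\{f\in B_2:\mathcal{S}(f)\subseteq A\}$, the key observation is that, for every fixed $a\in\mathbb{R}$, the Fourier-coefficient map $f\mapsto F_f(a)$ is $1$-Lipschitz with respect to $\|\cdot\|_{M_2}$. Indeed, $F_f(a)-F_g(a)=M\bigl((f-g)(t)\ee^{-\ii a t}\bigr)$, and the elementary bound $|M(h)|\le \|h\|_{M_2}$ (apply Cauchy--Schwarz to $\frac{1}{2L}\int_{-L}^L |h(t)|\,\d t\le\bigl(\frac{1}{2L}\int_{-L}^L|h(t)|^2\,\d t\bigr)^{1/2}$ and take $\limsup$) gives $|F_f(a)-F_g(a)|\le \|f-g\|_{M_2}$. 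Therefore, for every $a\notin A$, the set $\{f\in B_2:F_f(a)=0\}$ is closed in $B_2$; it contains $\mathcal{T}(A)$ and hence its $\|\cdot\|_{M_2}$-closure $B_2(A)$. Intersecting over all $a\notin A$ yields the first inclusion.

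For the reverse inclusion I would invoke the classical Bochner--Fej\'er approximation theorem: for every $f\in B_2$ one can construct an explicit sequence of trigonometric polynomials $\sigma_n[f]$, the Bochner--Fej\'er means of the formal Fourier series of $f$, such that $\|\sigma_n[f]-f\|_{M_2}\to 0$ and each $\sigma_n[f]$ has Fourier spectrum contained in $\mathcal{S}(f)$; see Section~2.5 of~\cite{Corduneanu}. Given $f\in B_2$ with $\mathcal{S}(f)\subseteq A$, each $\sigma_n[f]$ then lies in $\mathcal{T}(A)$, so $f$ lies in the $\|\cdot\|_{M_2}$-closure of $\mathcal{T}(A)$, i.e.\ $f\in B_2(A)$.

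The entire conceptual weight of the argument sits in the Bochner--Fej\'er theorem, and that is the only genuinely non-trivial ingredient; since it is a foundational result in the theory of Besicovitch almost periodic functions and is already available in the Corduneanu reference the paper relies on, I would invoke it rather than reprove it. Everything else is bookkeeping: continuity of the mean value in $\|\cdot\|_{M_2}$ handles the easy inclusion, and the ``spectral compatibility'' of Bochner--Fej\'er summation (the frequencies of the approximants automatically lie in $\mathcal{S}(f)$) immediately yields the hard one.
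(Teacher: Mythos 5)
Your proposal is correct and follows essentially the same route as the paper: the easy inclusion rests on the Cauchy--Schwarz bound $|M(h)|\le\|h\|_{M_2}$ (which the paper applies directly to $f-P_m$ rather than packaging it as Lipschitz continuity of $f\mapsto F_f(a)$, but the computation is identical), and the hard inclusion is handled in both cases by the spectral compatibility of the Bochner--Fej\'er approximants. No gaps.
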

\begin{proof}
We first show that $f\in B_2(A)$ and $a\in \mathcal{S}(f)$ imply $a\in A$. We argue by contradiction and suppose that $a\notin A$. Take a sequence of trigonometric polynomials $(P_m)_{m\in\mathbb{N}}\subset \mathcal{T}(A)$ such that $\|f-P_m\|_{M_2}\to 0$, as $m\to\infty$. Then, for every $m\in\mathbb{N}$,
\begin{equation}\label{eq:prop1_proof1}
\lim_{L\to+\infty}\frac{1}{2L}\int_{-L}^L f(a;t){\rm d}t=\lim_{L\to+\infty}\frac{1}{2L}\int_{-L}^L f(t)\ee^{-\ii a t}{\rm d}t=\lim_{L\to+\infty}\frac{1}{2L}\int_{-L}^{L} (f(t)-P_m(t))\ee^{-\ii a t}{\rm d}t,
\end{equation}
but, by the Cauchy--Schwartz inequality,
$$
\lim_{L\to+\infty}\frac{1}{2L}\left|\int_{-L}^L (f(t)-P_m(t))\ee^{-\ii a t}{\rm d}t\right|\leq \limsup_{L\to+\infty} \left(\frac{1}{2L}\int_{-L}^L |(f(t)-P_m(t)|^2{\rm d}t\right)^{1/2}=\|f-P_m\|_{M_2}.
$$
Sending $m\to\infty$ shows that the left-hand side of~\eqref{eq:prop1_proof1} is equal to zero which contradicts $a\in \mathcal{S}(f)$.

In the other direction, suppose that $f\in B_2$ and $\mathcal{S}(f)\subseteq A$. It is known, see Theorem II on p.~105 in~\cite{Besicovitch}, that the Bochner--Fej\'{e}r sequence $(\sigma_m(f))_{m\in\mathbb{N}}$ of trigonometric polynomials constructed from the function $f$ belongs to $\mathcal{T}(A)$ and converges to $f$ with respect to the $\|\cdot\|_{M_2}$ norm. Thus, $f\in B_2(A)$.
\end{proof}
If the set $A=\{\lambda_1,\ldots,\lambda_n\}$ is finite, the mapping
$$
B_2(A)\ni f\mapsto (F_f(\lambda_1),\ldots,F_f(\lambda_n))\in \mathbb{C}^n
$$
defines an isometry between $B_2(A)$ and $\mathbb{C}^{n}$ viewed as inner product spaces (notation $B_2(A) \simeq \mathbb{C}^{n}$). Here, $B_2(A)$ is endowed with the inner product
\begin{equation}\label{eq:inner_product_B_2(A)}
\langle f,g\rangle_2:=\limsup_{L\to+\infty}\frac{1}{2L}\int_{-L}^{L}f(t)\overline{g(t)}{\rm d}t,\quad f,g\in B_2(A).
\end{equation}
Indeed,
$$
\langle f,g\rangle_2=\limsup_{L\to+\infty}\frac{1}{2L}\int_{-L}^{L}\sum_{i,j=1}^{n}F_f(\lambda_i)\overline{F_g(\lambda_j)}\ee^{\ii(\lambda_i-\lambda_j)t}{\rm d}t=\sum_{i=1}^{n}F_f(\lambda_i)\overline{F_g(\lambda_i)}.
$$
Similarly, if the set $A$ is countably infinite,~\eqref{eq:square_suumable} implies that $B_2(A)$ endowed with the inner product~\eqref{eq:inner_product_B_2(A)} is isometric to the infinite-dimensional (separable) Hilbert space $\ell_2(\mathbb{C})$ of square-\-summ\-able sequences over $\mathbb{C}$. A particularly important case is obtained by letting $A$ be the spectrum $\mathcal{S}(f)$ of a {\em fixed} Besicovitch almost periodic function $f$. Summarizing, we arrive at the following result.
\begin{prop}\label{prop:isometry}
Let $f\in B_2$ be a fixed Besicovitch almost periodic function. The space $B_2(\mathcal{S}(f))$ is a complete separable Hilbert space over $\mathbb{C}$ with respect to the inner product~\eqref{eq:inner_product_B_2(A)}. If $|\mathcal{S}(f)|<\infty$, then $B_2(\mathcal{S}(f))\simeq \mathbb{C}^{|\mathcal{S}(f)|}$, whereas if $\mathcal{S}(f)$ is countably infinite, then $B_2(\mathcal{S}(f))\simeq \ell_{2}(\mathbb{C})$. In both cases the isometry $\kappa_f$ is given by
$$
B_2(\mathcal{S}(f))\ni g\longmapsto \kappa_f(g):=(F_g(\lambda))_{\lambda\in \mathcal{S}(f)}.
$$
\end{prop}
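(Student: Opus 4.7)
The plan is to leverage the preceding proposition, which identifies $B_2(\mathcal{S}(f))$ with $\{g\in B_2 : \mathcal{S}(g)\subseteq\mathcal{S}(f)\}$. First I would observe that $B_2(\mathcal{S}(f))$, being by definition a $\|\cdot\|_{M_2}$-closed subspace of the complete Marcinkiewicz space, is itself complete. Parseval's identity~\eqref{eq:square_suumable} then shows that for every $g\in B_2(\mathcal{S}(f))$ the sequence $\kappa_f(g)=(F_g(\lambda))_{\lambda\in\mathcal{S}(f)}$ lies in $\ell_2(\mathbb{C})$ (or in $\mathbb{C}^n$ if $|\mathcal{S}(f)|=n$) and satisfies $\|\kappa_f(g)\|^2=\|g\|^2_{M_2}$, so $\kappa_f$ is norm-preserving. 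Linearity of $\kappa_f$ is immediate from the linearity of the mean-value functional $g\mapsto F_g(\lambda)=M(g(\cdot)\ee^{-\ii\lambda\cdot})$.

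Next I would upgrade the norm identity to the full inner-product identity $\langle g,h\rangle_2=\sum_{\lambda\in\mathcal{S}(f)}F_g(\lambda)\overline{F_h(\lambda)}$. The finite-spectrum case is already carried out in the excerpt preceding the proposition. For countably infinite $\mathcal{S}(f)$ I would fix $g,h\in B_2(\mathcal{S}(f))$ and approximate them in $\|\cdot\|_{M_2}$ by Bochner--Fej\'er polynomials $g_m,h_m\in\mathcal{T}(\mathcal{S}(f))$, which exist by Theorem II on p.~105 of~\cite{Besicovitch} as recalled in the previous proposition's proof. The Cauchy--Schwarz bound for $\|\cdot\|_{M_2}$ used there shows that $g\mapsto F_g(\lambda)$ and $g\mapsto\langle g,h\rangle_2$ are both $\|\cdot\|_{M_2}$-continuous, and combined with the already established norm identity this legitimately lets me pass to the limit $m\to\infty$ in the finite-dimensional inner product formula. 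In particular the $\limsup$ in~\eqref{eq:inner_product_B_2(A)} is an honest limit on $B_2(\mathcal{S}(f))$.

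For surjectivity of $\kappa_f$ the finite case is trivial by direct construction. In the infinite case I would fix an enumeration $\mathcal{S}(f)=\{\lambda_k:k\in\mathbb{N}\}$ and, given any $(c_\lambda)_{\lambda}\in\ell_2(\mathbb{C})$, set $P_n(t):=\sum_{k=1}^{n}c_{\lambda_k}\ee^{\ii\lambda_k t}\in\mathcal{T}(\mathcal{S}(f))$. The finite-spectrum isometry yields $\|P_n-P_m\|^2_{M_2}=\sum_{k=m+1}^{n}|c_{\lambda_k}|^2$, so $(P_n)$ is Cauchy in $\mathcal{M}_2$, and by completeness of $B_2(\mathcal{S}(f))$ converges to some $g\in B_2(\mathcal{S}(f))$. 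Continuity of the Fourier coefficient functional on each coordinate then forces $F_g(\lambda_k)=c_{\lambda_k}$ for every $k$, i.e.\ $\kappa_f(g)=(c_\lambda)_\lambda$. Separability of $B_2(\mathcal{S}(f))$ is finally inherited from $\ell_2(\mathbb{C})$ (or is trivial in the finite case) via the isometry.

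The main obstacle I anticipate is the passage from the finite-dimensional inner-product identity to the infinite-dimensional one: one must control simultaneously a $\limsup$-defined bilinear form, an infinite series of Fourier products, and the fact that a priori the identity is only known on the dense subspace $\mathcal{T}(\mathcal{S}(f))$. The key technical input that resolves this is the Cauchy--Schwarz estimate for $\|\cdot\|_{M_2}$, already exploited in the proof of the preceding proposition, which ensures that every map in sight is $\|\cdot\|_{M_2}$-continuous and thus reduces everything to the finite-spectrum identity.
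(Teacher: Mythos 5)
Your proof is correct and follows essentially the same route as the paper, which states this proposition as a summary of the preceding discussion (the finite-spectrum computation with exponentials plus the Parseval identity~\eqref{eq:square_suumable}) without a separate formal proof. The details you supply --- the Cauchy--Schwarz continuity of the Fourier-coefficient and inner-product functionals, and the Riesz--Fischer-type completeness argument for surjectivity --- are exactly the standard ones implicit in the paper's sketch.
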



\section{Almost periodic stochastic processes}\label{sec:ap_processes}

Let $f\in B_2$ be a fixed Besicovitch almost periodic function. A stochastic process $(f(V_{-L,L}(\omega)+t))_{t\in\R}$ is a translation of $f$ by a random number $V_{-L,L}$ chosen uniformly from $[-L,L]$. Here, $\omega \in \Omega$ denotes an element of the probability space $(\Omega,\mathcal{F},\mathbb{P})$ on which the random variables $V_{-L,L}$ are defined. It is clear that $f(V_{-L,L}(\omega)+\cdot)\in B_2$ and the spectrum of $f(V_{-L,L}(\omega)+\cdot)$ coincides with the spectrum of $f(\cdot)$, for every fixed $\omega\in\Omega$. The latter follows from the equality\footnote{This argument also shows that arbitrary translations (random or deterministic) do not change the spectrum.}
\begin{equation}\label{eq:Fourier_coefficient_shifted}
F_{f(V_{-L,L}+\cdot)}(a)=\limsup_{T\to+\infty}\frac{1}{2T}\int_{-T}^{T}f(V_{-L,L}+t)\ee^{-\ii t a}{\rm d}t=\ee^{\ii a V_{-L,L}}F_f(a).
\end{equation}
Thus, for every $\omega\in\Omega$, $f(V_{-L,L}(\omega)+\cdot)$ is an element of $B_2(\mathcal{S}(f))$. Let $\mathcal{F}_{\mathcal{S}(f)}$ be the Borel sigma-algebra on $B_2(\mathcal{S}(f))$. The mapping
$$
\mathbb{R}\times\Omega\ni (t,\omega)\mapsto f(V_{-L,L}(\omega)+t)\in\mathbb{C} 
$$
defines a random element taking values in the measurable space $(B_2(\mathcal{S}(f)),\mathcal{F}_{\mathcal{S}(f)})$.

A simpler approach could have involved defining $t\mapsto f(V_{-L,L}+t)$ as a random element taking values in $B_2$. However, the non-separability of $B_2$ complicates the analysis, as standard probabilistic tools (tightness of a probability measure, Prohorov's and Skorokhod's representation theorems, etc.) are not readily applicable in non-separable spaces.

\subsection{Convergence in distribution on \texorpdfstring{$B_2$}{B2}}

Theorem~\ref{thm:conv_in_B2} below is a functional version of~\eqref{eq:one_dim_convergence} in the subspace $B_2(\mathcal{S}(f))$ of $B_2$. It is the starting observation for the present paper. Even though the result is rather simple, we have not been able to locate it in the literature.

Let $\mathbb{T}$ be the unit circle in $\mathbb{C}$ regarded as a topological group and
$$
\mathbb{T}_f:=\bigtimes_{\lambda\in\mathcal{S}(f)}\mathbb{T}_{\lambda}
$$
the torus obtained by taking (either finite if $|\mathcal{S}(f)|<\infty$, or infinite if $|\mathcal{S}(f)|=\infty$) product of copies $\mathbb{T}_{\lambda}$ of $\mathbb{T}$ endowed with the product topology. The collection $(\ee^{\ii \lambda V_{-L,L}})_{\lambda\in\mathcal{S}(f)}$ is a random element taking values in $\mathbb{T}_f$. According to Theorem 3.1 in~\cite{Duy:2012}, it converges in distribution on $\mathbb{T}_f$, as $L\to\infty$, to a random variable, to be denoted by $\mathcal{V}=(\mathcal{V}_{\lambda})_{\lambda\in\mathcal{S}(f)}$, whose distribution can be uniquely characterized by the Fourier transform on the Pontryagin dual group $\widehat{\mathbb{T}_f}\simeq \mathbb{Z}^{|\mathcal{S}(f)|}$, see the proof of Theorem~\ref{cor:conv_in_B2_one_sided} below and, particularly, formula~\eqref{eq:fourier_transform_of_V}.

If the collection $\mathcal{S}(f)\subset\mathbb{R}$ is linearly independent over $\mathbb{Q}$, then $(\mathcal{V}_{\lambda})_{\lambda\in\mathcal{S}(f)}$ has the same distribution as a collection $(\mathcal{U}_{\lambda})_{\lambda\in\mathcal{S}(f)}$ of mutually independent and identically distributed random variables, each with the uniform distribution on $\mathbb{T}$, see~\cite{Laurinchikas4,Laurinchikas5,Laurinchikas7,Laurinchikas2,Laurinchikas6,Laurinchikas3} and also~\eqref{eq:Kronecker-Weyl} below. A bit more general, yet useful for our purposes, is the situation when $\mathcal{S}(f)$ possesses a decomposition $\mathcal{S}(f)=\mathcal{C}\cup(-\mathcal{C})\cup \mathcal{R}\cup\{0\}$ or $\mathcal{S}(f)=\mathcal{C}\cup(-\mathcal{C})\cup \mathcal{R}$, where the sets on the right-hand side are pairwise disjoint and $\mathcal{C}\cup\mathcal{R}$ (or, equivalently $(-\mathcal{C})\cup\mathcal{R}$) is linearly independent over $\mathbb{Q}$. The latter, in particular, implies that $\mathcal{R}\cap(-\mathcal{R})=\varnothing$. In other words, $\mathcal{S}(f)$ may now contain $0$ and the opposite pairs $-\lambda,\lambda$ which are obviously linearly dependent over $\mathbb{Q}$. The union $\mathcal{C}\cup(-\mathcal{C})$ contains all such pairs, whereas $\mathcal{R}$ consists of all $\lambda\in\mathcal{S}(f)$ such that $-\lambda\notin\mathcal{S}(f)$. In the most general situation of the described case, the collection $\mathcal{V}=(\mathcal{V}_{\lambda})_{\lambda\in\mathcal{S}(f)}$ is comprised of four subcollections $(\overline{\mathcal{U}_{-\lambda}})_{\lambda\in(-\mathcal{C})}$, $(\mathcal{U}_{\lambda})_{\lambda\in\mathcal{C}}$, $(\mathcal{U}_{\lambda})_{\lambda\in\mathcal{R}}$ and $\mathcal{V}_0=1$, with $(\mathcal{U}_{\lambda})_{\lambda\in\mathcal{C}\cup\mathcal{R}}$ being mutually independent uniformly distributed on $\mathbb{T}$ random variables, and $\overline{z}$ is the complex conjugate of $z$.  A typical example is $\mathcal{S}(f)=\mathcal{C}\cup(-\mathcal{C})$, for $\mathcal{C}$ such that $\mathcal{C}$ is linearly independent over $\mathbb{Q}$, where the spectrum is symmetric around the origin and does not contain $0$.

\begin{theorem}\label{thm:conv_in_B2}
Suppose that $f\in B_2$. Let $\mathcal{S}(f)$ be the spectrum of $f$ and
$$
f(t)~\sim~\sum_{\lambda\in\mathcal{S}(f)}f_{\lambda}\ee^{\ii \lambda t},\quad f_{\lambda}=F_f(\lambda)=M(f(t)\ee^{-\ii \lambda t}),\quad \lambda\in\mathcal{S}(f),
$$
be the formal Fourier series of $f$. Then, as $L\to+\infty$, the stochastic processes $f(V_{-L,L}+\cdot)$ converge in distribution on the space $B_2(\mathcal{S}(f))$ to a random element $\mathcal{M}_{f}\in B_2(\mathcal{S}(f))\subset B_2$ uniquely defined by the formal Fourier series
$$
\mathcal{M}_{f}(t)~\sim~\sum_{\lambda\in\mathcal{S}(f)}\mathcal{V}_{\lambda}f_{\lambda}\ee^{\ii \lambda t}.
$$
\end{theorem}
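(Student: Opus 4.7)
The plan is to transport the problem to a standard Hilbert space via the isometry $\kappa_f$ from Proposition~\ref{prop:isometry} and then invoke the continuous mapping theorem, exploiting the fact that the randomness enters only through the complex phases $\ee^{\ii\lambda V_{-L,L}}$. Specifically, formula~\eqref{eq:Fourier_coefficient_shifted} identifies
$$
\kappa_f(f(V_{-L,L}+\cdot)) = (\ee^{\ii\lambda V_{-L,L}}\, f_\lambda)_{\lambda\in\mathcal{S}(f)},
$$
viewed as a random element of $\ell_2(\mathbb{C})$ (or of $\mathbb{C}^{|\mathcal{S}(f)|}$ when the spectrum is finite). Since $\kappa_f$ is an isometric isomorphism, weak convergence on $B_2(\mathcal{S}(f))$ is equivalent to weak convergence of these coordinate vectors.

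Next I would invoke Theorem 3.1 of~\cite{Duy:2012}, which already delivers weak convergence of the phase collection $(\ee^{\ii\lambda V_{-L,L}})_{\lambda\in\mathcal{S}(f)}$ to the random element $\mathcal{V}$ in the torus $\mathbb{T}_f$ equipped with the product topology. The bridge between this convergence on $\mathbb{T}_f$ and the desired convergence in $\ell_2(\mathbb{C})$ is the multiplier map $T:\mathbb{T}_f \to \ell_2(\mathbb{C})$ defined by $T((z_\lambda)_\lambda) := (z_\lambda f_\lambda)_\lambda$. This map is well-defined by Parseval's identity~\eqref{eq:square_suumable} and sends $\mathbb{T}_f$ into the sphere of radius $\|f\|_{M_2}$ in $\ell_2(\mathbb{C})$, because $|z_\lambda|=1$ deterministically.

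The heart of the argument is to verify that $T$ is continuous. If $z^{(n)}\to z$ coordinate-wise in $\mathbb{T}_f$, then for each $\lambda$ the quantity $|z_\lambda^{(n)}-z_\lambda|^2 |f_\lambda|^2$ tends to zero while being dominated by the summable envelope $4|f_\lambda|^2$; dominated convergence for the counting measure on $\mathcal{S}(f)$ then yields $\|T(z^{(n)})-T(z)\|_{\ell_2}\to 0$. The continuous mapping theorem now transfers the weak convergence on the torus to $\ell_2$-convergence of $T((\ee^{\ii\lambda V_{-L,L}})_\lambda)$ towards $T(\mathcal{V})=(\mathcal{V}_\lambda f_\lambda)_\lambda$, and pulling back by $\kappa_f^{-1}$ identifies the weak limit with the element $\mathcal{M}_f\in B_2(\mathcal{S}(f))$ whose Fourier series is described in the statement.

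The sole non-trivial step is the upgrade from coordinate-wise (product topology) convergence on $\mathbb{T}_f$ to $\ell_2$-norm convergence: without a square-summability input the upgrade would fail, and it is precisely Parseval's equality~\eqref{eq:square_suumable} that supplies the summable dominating envelope making the continuity of $T$ an elementary dominated-convergence computation rather than a tightness problem.
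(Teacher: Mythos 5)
Your proposal is correct and follows essentially the same route as the paper: reduce to $\ell_2(\mathbb{C})$ via the isometry of Proposition~\ref{prop:isometry}, invoke Theorem~3.1 of~\cite{Duy:2012} for convergence of the phases on $\mathbb{T}_f$, and upgrade coordinate-wise convergence to $\ell_2$-convergence by dominated convergence against the Parseval envelope $4|f_\lambda|^2$. The only cosmetic difference is that you package the last step as continuity of the multiplier map $T$ followed by the continuous mapping theorem, whereas the paper applies the Skorokhod representation theorem and runs the identical dominated-convergence computation almost surely.
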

\begin{proof}
In view of~\eqref{eq:Fourier_coefficient_shifted} the formal Fourier series of $f(V_{-L,L}+\cdot)$ is given by
$$
f(V_{-L,L}+t)~\sim~\sum_{\lambda\in\mathcal{S}(f)}\ee^{\ii \lambda V_{-L,L}}f_{\lambda}\ee^{\ii \lambda t}.
$$
In the subsequent proof we assume that $|\mathcal{S}(f)|=\infty$. A proof in the case of finite $\mathcal{S}(f)$ requires only minor modifications and is, therefore, omitted.

By the isometry established in Proposition~\ref{prop:isometry} it suffices to check that $\ell_2(\mathbb{C})$-valued random variables $(\ee^{\ii \lambda V_{-L,L}}f_{\lambda})_{\lambda\in\mathcal{S}(f)}$ converge in distribution, as $L\to+\infty$, to $(\mathcal{V}_{\lambda}f_{\lambda})_{\lambda\in\mathcal{S}(f)}$. By the Skorokhod representation theorem, retaining the original notation for the versions, we can and do assume that, for each $\lambda\in\mathcal{S}(f)$,
$$
\ee^{\ii \lambda V_{-L,L}}~\to~\mathcal{V}_{\lambda}\quad\text{a.s.}
$$
Using that $\sum_{\lambda\in\mathcal{S}(f)}|f_{\lambda}|^2<\infty$, we obtain, by the Lebesgue dominated convergence theorem, that
\begin{multline*}
\lim_{L\to+\infty}\sum_{\lambda\in\mathcal{S}(f)}|\ee^{\ii \lambda V_{-L,L}}f_{\lambda}-\mathcal{V}_{\lambda}f_{\lambda}|^2=\lim_{L\to+\infty}\sum_{\lambda\in\mathcal{S}(f)}|f_{\lambda}|^2 |\ee^{\ii \lambda V_{-L,L}}-\mathcal{V}_{\lambda}|^2\\
=\sum_{\lambda\in\mathcal{S}(f)}|f_{\lambda}|^2 \lim_{L\to+\infty}|\ee^{\ii \lambda V_{-L,L}}-\mathcal{V}_{\lambda}|^2=0\quad\text{a.s.}
\end{multline*}
The proof is complete.
\end{proof}
On many occasions it is more convenient to work with a slightly different, one-sided averages defined by $t\mapsto f(V_{0,L}+t)$, where $V_{0,L}=L\cdot V_{[0,1]}$ and  $V_{[0,1]}$ is a random variable on $(\Omega,\mathcal{F},\mathbb{P})$ with the uniform distribution on $[0,1]$. It turns out that this type of averaging leads to the same limiting process $\mathcal{M}_{f}$. In order to check this statement, we 
first prove a more general version of Theorem~\ref{thm:conv_in_B2}.

\begin{theorem}\label{cor:conv_in_B2_one_sided}
Under the assumptions of Theorem~\ref{thm:conv_in_B2}, there is joint convergence in distribution
$$
\left(f(V_{-L,L}+\cdot),V_{-1,1}\right)~\Longrightarrow~(\mathcal{M}_{f},\mathcal{U}),\quad L\to+\infty,
$$
on the product space $B_2(\mathcal{S}(f))\times \mathbb{R}$, where $\mathcal{U}$ has the uniform distribution on $[-1,1]$ and is independent of $\mathcal{M}_{f}$.
\end{theorem}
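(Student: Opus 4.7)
The second marginal $V_{-1,1}$ has law $\mathrm{Unif}[-1,1]$ independently of $L$, and the first marginal converges to $\mathcal{M}_f$ by Theorem~\ref{thm:conv_in_B2}, so the only new content here is the \emph{asymptotic independence} of the two components in spite of the deterministic relation $V_{-L,L}=L\cdot V_{-1,1}$. My plan is to prove the joint convergence first at the level of the torus $\mathbb{T}_f\times\R$ via a mixed character / characteristic-function computation, and then transfer it to $B_2(\mathcal{S}(f))\times\R$ via the isometry $\kappa_f$ of Proposition~\ref{prop:isometry}, exactly as in the proof of Theorem~\ref{thm:conv_in_B2}. Concretely, if I establish
$$
\bigl((\ee^{\ii\lambda V_{-L,L}})_{\lambda\in\mathcal{S}(f)},\,V_{-1,1}\bigr)~\Longrightarrow~(\mathcal{V},\mathcal{U})\quad\text{on}\quad \mathbb{T}_f\times\R
$$
with $\mathcal{V}$ independent of $\mathcal{U}\sim\mathrm{Unif}[-1,1]$, then applying Skorokhod representation to this joint convergence and running the $\ell_2$ dominated-convergence step from the proof of Theorem~\ref{thm:conv_in_B2} verbatim (the additional second coordinate passes through as is) yields the desired convergence on $B_2(\mathcal{S}(f))\times\R$.

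The core computation is the mixed Fourier transform on $\mathbb{T}_f\times\R$. For finitely supported $(k_\lambda)_{\lambda\in\mathcal{S}(f)}\in\bigoplus_{\lambda\in\mathcal{S}(f)}\Z$ and $t\in\R$, setting $\Lambda:=\sum_{\lambda\in\mathcal{S}(f)}k_\lambda\lambda$ and using $V_{-L,L}=L\cdot V_{-1,1}$,
$$
\E\Bigl[\prod_{\lambda\in\mathcal{S}(f)}\ee^{\ii k_\lambda\lambda V_{-L,L}}\,\ee^{\ii t V_{-1,1}}\Bigr]=\E\bigl[\ee^{\ii V_{-1,1}(L\Lambda+t)}\bigr]=\frac{\sin(L\Lambda+t)}{L\Lambda+t}.
$$
Taking $t=0$ recovers (by Pontryagin-dual uniqueness on $\mathbb{T}_f$) the marginal characters $\E\bigl[\prod_\lambda\mathcal{V}_\lambda^{k_\lambda}\bigr]=\1_{\{\Lambda=0\}}$, which is precisely the formula~\eqref{eq:fourier_transform_of_V} anticipated in the excerpt. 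For general $t$: if $\Lambda=0$ the transform is identically $(\sin t)/t=\E[\ee^{\ii t\mathcal{U}}]$, while if $\Lambda\neq 0$ it tends to $0$ as $L\to+\infty$. In either case the joint limit factors as $\E\bigl[\prod_\lambda\mathcal{V}_\lambda^{k_\lambda}\bigr]\cdot\E[\ee^{\ii t\mathcal{U}}]$, which is exactly the mixed Fourier transform of the product law with independent components.

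Marginal tightness on $\mathbb{T}_f\times\R$ is automatic ($\mathbb{T}_f$ is compact-metrizable and $V_{-1,1}$ has a fixed distribution), so the joint family is tight. Any subsequential weak limit has its mixed Fourier transform equal to the one computed above, hence must agree with the law of $(\mathcal{V},\mathcal{U})$ with independent coordinates; this identifies the subsequential limit uniquely and upgrades subsequential convergence to full weak convergence on $\mathbb{T}_f\times\R$. The main obstacle — and really the only point requiring an idea rather than bookkeeping — is the Fourier computation: the representation $V_{-L,L}=L\cdot V_{-1,1}$ is what allows the two oscillatory factors to be merged into a single Dirichlet kernel expectation, and the decay $\sin(L\Lambda+t)/(L\Lambda+t)\to 0$ for $\Lambda\neq 0$ is exactly the mechanism producing asymptotic independence.
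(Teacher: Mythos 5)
Your proposal is correct and follows essentially the same route as the paper: reduce to the joint convergence of $((\ee^{\ii\lambda V_{-L,L}})_{\lambda},V_{-1,1})$ on $\mathbb{T}_f\times\R$, compute the mixed character $\E[\ee^{\ii V_{-1,1}(L\Lambda+t)}]=\sin(L\Lambda+t)/(L\Lambda+t)$, and observe that it factors in the limit into $\1_{\{\Lambda=0\}}\cdot(\sin t)/t$, then transfer to $B_2(\mathcal{S}(f))\times\R$ via the isometry and the $\ell_2$ dominated-convergence step. Your added remarks on tightness and subsequential uniqueness only make explicit what the paper leaves implicit.
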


Note that the non-obvious part of the statement is {\em independence} of the components on the right-hand side!

\begin{proof}
It suffices to check that
\begin{equation}\label{cor:conv_in_B2_one_sided_proof1}
\left((\ee^{\ii \lambda V_{-L,L}})_{\lambda\in\mathcal{S}(f)},V_{-1,1}\right)~\Longrightarrow~(\mathcal{V},\mathcal{U}),\quad L\to\infty,
\end{equation}
on the product space $\mathbb{T}_f\times\mathbb{R}$ with $\mathcal{V}$ and $\mathcal{U}$ independent.
The subsequent argument is an adaptation of the proof of Theorem 3.1 in~\cite{Duy:2012}. Without loss of generality, assume that $|\mathcal{S}(f)|=\infty$.
The Pontryagin dual group of $\mathbb{T}_f\times \mathbb{R}$ is isomorphic to $\left(\oplus_{\lambda\in\mathcal{S}(f)}\mathbb{Z}_{\lambda}\right)\oplus \mathbb{R}$, where $\mathbb{Z}_{\lambda}$, for $\lambda\in\mathcal{S}(f)$, are copies of $\mathbb{Z}$. Let ${\bf k}=(k_{\lambda})_{\lambda\in\mathcal{S}(f)}\in\oplus_{\lambda\in\mathcal{S}(f)}\mathbb{Z}_{\lambda}$ be such that only finitely many integers $k_{\lambda}$ are non‐zero and let $t\in\mathbb{R}$ be a fixed real number. The pair $({\bf k},t)$ (identified with the character on $\mathbb{T}_f\times \mathbb{R}$) acts on $\mathbb{T}_f\times\mathbb{R}$ by the rule
$$
\mathbb{T}_f\times\mathbb{R}\ni ((v_{\lambda})_{\lambda\in\mathcal{S}(f)},u)\longmapsto \ee^{\ii t u}\prod_{\lambda\in\mathcal{S}(f)}v_{\lambda}^{k_{\lambda}}.
$$
The Fourier transform $g_L(({\bf k},t))$ of the left-hand side of~\eqref{cor:conv_in_B2_one_sided_proof1} is equal to
\begin{multline*}
g_L(({\bf k},t))=\mathbb{E}\left[\ee^{\ii t V_{-1,1}}\prod_{\lambda\in\mathcal{S}(f)}\ee^{\ii \lambda k_{\lambda} V_{-L,L}}\right]=\frac{1}{2L}\int_{-L}^{L}\ee^{\ii x(t/L+\sum_{\lambda\in\mathcal{S}(f)}\lambda k_{\lambda})}{\rm d}
x\\
=\frac{1}{2L}\int_{-L}^{L}\cos\left(x\left(t/L+\sum_{\lambda\in\mathcal{S}(f)}\lambda k_{\lambda}\right)\right){\rm d}x.
\end{multline*}
As $L\to+\infty$, the right-hand side converges to the product $g({\bf k})\sin(t)/t$ of the (usual) Fourier transform $\sin t/t$ of $\mathcal{U}$ and the Fourier transform $g({\bf k})$ of $\mathcal{V}$ given by
\begin{equation}\label{eq:fourier_transform_of_V}
g({\bf k})=\begin{cases}
1,&\text{if }\sum_{\lambda\in\mathcal{S}(f)}\lambda k_{\lambda}=0,\\
0,&\text{otherwise}.\end{cases}
\end{equation}
The proof is complete.
\end{proof}

Below is the promised limit theorem for the one-sided averages $t\mapsto f(V_{0,L}+t)$.
\begin{corollary}\label{cor:one_sided_convergence_in_B2}
Under the assumptions of Theorem~\ref{thm:conv_in_B2}, the stochastic processes $f(V_{0,L}+\cdot)$ converge in distribution on the space $B_2(\mathcal{S}(f))$ to $\mathcal{M}_{f}$, as $L\to+\infty$.
\end{corollary}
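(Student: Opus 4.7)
My plan is to mimic the proof of Theorem~\ref{thm:conv_in_B2} almost verbatim, with the two-sided average $V_{-L,L}$ replaced throughout by $V_{0,L}$. The identity~\eqref{eq:Fourier_coefficient_shifted} is insensitive to whether the shift is centred, so the (random) Fourier series of $f(V_{0,L}+\cdot)$ reads
$$
f(V_{0,L}+t)~\sim~\sum_{\lambda\in\mathcal{S}(f)}\ee^{\ii \lambda V_{0,L}}f_{\lambda}\ee^{\ii \lambda t},
$$
and via the isometry $\kappa_f$ of Proposition~\ref{prop:isometry} the assertion reduces to convergence in distribution of $(\ee^{\ii \lambda V_{0,L}}f_{\lambda})_{\lambda\in\mathcal{S}(f)}$ in $\ell_2(\mathbb{C})$ towards $(\mathcal{V}_{\lambda}f_{\lambda})_{\lambda\in\mathcal{S}(f)}$.

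The only substantively new step is to verify that $(\ee^{\ii \lambda V_{0,L}})_{\lambda\in\mathcal{S}(f)}\Longrightarrow \mathcal{V}$ on $\mathbb{T}_f$, for which I would replay the Pontryagin-dual argument from the proof of Theorem~\ref{cor:conv_in_B2_one_sided}. For a character $\mathbf{k}=(k_{\lambda})_{\lambda\in\mathcal{S}(f)}\in\bigoplus_{\lambda\in\mathcal{S}(f)}\mathbb{Z}_{\lambda}$ with only finitely many nonzero entries, setting $\mu:=\sum_{\lambda}\lambda k_{\lambda}$, the Fourier transform of the pre-limit at $\mathbf{k}$ equals
$$
\mathbb{E}\prod_{\lambda\in\mathcal{S}(f)}\ee^{\ii \lambda k_{\lambda} V_{0,L}}=\frac{1}{L}\int_{0}^{L}\ee^{\ii \mu x}\,\d x,
$$
which is $1$ when $\mu=0$ and has modulus bounded by $2/(|\mu| L)$ when $\mu\neq 0$. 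As $L\to\infty$ this converges pointwise on the dual group to the Fourier transform $g(\mathbf{k})$ of $\mathcal{V}$ given in~\eqref{eq:fourier_transform_of_V}, so the continuity theorem on the compact abelian group $\mathbb{T}_f$ yields the required convergence in distribution.

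With the $\mathbb{T}_f$-convergence established, the conclusion follows exactly as in the final part of the proof of Theorem~\ref{thm:conv_in_B2}: the Skorokhod representation theorem upgrades it to coordinatewise almost sure convergence $\ee^{\ii \lambda V_{0,L}}\to\mathcal{V}_{\lambda}$ for each $\lambda\in\mathcal{S}(f)$, and the Parseval bound $\sum_{\lambda}|f_{\lambda}|^2<\infty$ from~\eqref{eq:square_suumable} then feeds the dominated convergence theorem in $\ell_2(\mathbb{C})$. I do not anticipate any genuine obstacle; the whole corollary is really a one-character-at-a-time verification on the Pontryagin dual. A tempting shortcut through Theorem~\ref{cor:conv_in_B2_one_sided} via the distributional identity $V_{0,L}\od V_{-L/2,L/2}+L/2$ would introduce an $L$-dependent deterministic shift whose effect on the Fourier coefficients is multiplication by $\ee^{\ii \lambda L/2}$; handling this factor forces essentially the same character computation, so the direct route above is cleanest.
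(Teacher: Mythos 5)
Your proof is correct, but it takes a genuinely different route from the paper. The paper deliberately passes through Theorem~\ref{cor:conv_in_B2_one_sided}: it first proves the \emph{joint} convergence $(f(V_{-L,L}+\cdot),V_{-1,1})\Rightarrow(\mathcal{M}_f,\mathcal{U})$ with the limits independent, and then obtains the corollary by observing that the law of $f(V_{0,L}+\cdot)$ is the conditional law of $f(V_{-L,L}+\cdot)$ given $V_{-1,1}\geq 0$, so that for a continuity set $A$ one has $\mathbb{P}\{f(V_{0,L}+\cdot)\in A\}\to 2\,\mathbb{P}\{\mathcal{M}_f\in A,\ \mathcal{U}\geq 0\}=\mathbb{P}\{\mathcal{M}_f\in A\}$ by independence. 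You instead rerun the whole machinery of Theorem~\ref{thm:conv_in_B2} for the one-sided average: the character computation $\frac{1}{L}\int_0^L \ee^{\ii\mu x}\,\d x\to\1_{\{\mu=0\}}$ on the dual group $\bigoplus_{\lambda}\mathbb{Z}_{\lambda}$ correctly identifies the limit of $(\ee^{\ii\lambda V_{0,L}})_{\lambda}$ on the compact group $\mathbb{T}_f$ as the same $\mathcal{V}$ of~\eqref{eq:fourier_transform_of_V}, and the Skorokhod-plus-dominated-convergence step in $\ell_2(\mathbb{C})$ transfers verbatim. Your route is more self-contained and slightly shorter if one only wants the corollary; the paper's route costs the extra Theorem~\ref{cor:conv_in_B2_one_sided} but yields as a by-product the asymptotic independence of $f(V_{-L,L}+\cdot)$ and $V_{-1,1}$, which the authors single out as the genuinely non-obvious content and which makes the conditioning argument one line. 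Your closing remark about the shift $V_{0,L}\od V_{-L/2,L/2}+L/2$ is also a fair assessment: the deterministic phase $\ee^{\ii\lambda L/2}$ would have to be absorbed by essentially the same dual-group computation, so nothing is saved that way.
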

\begin{proof}
Note that the distribution of $f(V_{0,L}+\cdot)\in B_2(\mathcal{S}(f))$ coincides with the conditional distribution of $f(V_{-L,L}+\cdot)$ given $V_{-1,1}\geq 0$. Therefore, for every $A\in\mathcal{F}_{\mathcal{S}(f)}$ satisfying
$\mathbb{P}\{\mathcal{M}_{f}\in\partial A\}=0$,
$$
\mathbb{P}\{f(V_{0,L}+\cdot)\in A\}=\frac{\mathbb{P}\{f(V_{-L,L}+\cdot)\in A, V_{-1,1}\geq 0\}}{\mathbb{P}\{V_{-1,1}\geq 0\}}~\to~2\mathbb{P}\{\mathcal{M}_{f}\in A, \mathcal{U}\geq 0\}=\mathbb{P}\{\mathcal{M}_{f}\in A\},
$$
as $L\to+\infty$, having utilized independence of $\mathcal{M}_{f}$ and $\mathcal{U}$ for the last equality.
\end{proof}

\subsection{The limit process for randomly translated almost periodic function}
From now on we 
always assume that $f\in B_2\setminus\{0\}$ satisfies:
\begin{equation}\label{eq:lin_independence_gen}
\begin{aligned}
&\mathcal{S}(f)\text{ does not have finite accumulation points and is decomposed into pairwise}\\
&\text{ disjoint sets }\mathcal{S}(f)=\mathcal{C}\cup(-\mathcal{C})\cup\mathcal{R},\text{ where }\mathcal{C}\cup\mathcal{R}\text{ is linearly independent over }\mathbb{Q}.
\end{aligned}
\end{equation}
In particular, we assume that $0\notin\mathcal{S}(f)$. This does not reduce generality: if $0\in\mathcal{S}(f)$ and $f_0$ is the corresponding Fourier coefficient, then $g(t):=f(t)-f_0$ satisfies $\mathcal{S}(g)=\mathcal{S}(f)\setminus\{0\}$. As has already been discussed, the components of the decomposition in~\eqref{eq:lin_independence_gen} necessarily satisfy $\mathcal{R}\cap(-\mathcal{R})=\varnothing$ and, without loss of generality, we can and do assume that $\mathcal{C}\subset [0,+\infty)$. Since 
$\mathcal{S}(f)$ does not have finite accumulation points by assumption, we can enumerate the points in $\mathcal{C}=\{\lambda_{k}^{(c)}\,:\,k\geq 1\}$ in the increasing order
\begin{equation}\label{eq:lambda_c_enumeration}
0 < \lambda_1^{(c)} < \lambda_2^{(c)}<\cdots<+\infty.
\end{equation}
The same applies to $\mathcal{R}$ but the corresponding sequence is, in general, two-sided $\mathcal{R}=\{\lambda_{k}^{(r)}\,:\,k\in\Za\}$ with $\Za:=\mathbb{Z}\setminus\{0\}$ and
\begin{equation}\label{eq:lambda_r_enumeration}
-\infty <\cdots <\lambda_{-2}^{(r)} < \lambda_{-1}^{(r)}<0<\lambda_{1}^{(r)}<\lambda_{2}^{(r)}<\cdots<+\infty.
\end{equation}
Naturally, the sequence given in~\eqref{eq:lambda_c_enumeration} is allowed to be finite and the sequence given in~\eqref{eq:lambda_r_enumeration} is allowed to be finite or one-sided infinite.

With the introduced notation, the Fourier series of $f$ and $\mathcal{M}_f$ can be written now as
\begin{align*}
&f(t)~\sim~\sum_{k\geq 1}\left(f_{\lambda^{(c)}_k}\ee^{\ii \lambda_k^{(c)} t}+f_{-\lambda^{(c)}_k}\ee^{-\ii \lambda_k^{(c)} t}\right)+\sum_{k\in\Za}f_{\lambda_k^{(r)}}\ee^{\ii \lambda_k^{(r)} t},\\
&\mathcal{M}_{f}(t)~\sim~\sum_{k\geq 1}\left(\mathcal{U}_{k}^{(c)}f_{\lambda^{(c)}_k}\ee^{\ii \lambda_k^{(c)} t}+\overline{\mathcal{U}_{k}^{(c)}}f_{-\lambda^{(c)}_k}\ee^{-\ii \lambda_k^{(c)} t}\right)+\sum_{k\in\Za}\mathcal{U}_{k}^{(r)}f_{\lambda_k^{(r)}}\ee^{\ii \lambda_k^{(r)} t},
\end{align*}
respectively, where $(\mathcal{U}_{k}^{(c)})_{k\geq 1}$ and $(\mathcal{U}_{k}^{(r)})_{k\in\Za}$ are independent sequences of independent identically distributed random variables, each with the uniform distribution on the unit circle $\mathbb{T}$.

Under~\eqref{eq:lin_independence_gen}, the formal Fourier series of $\mathcal{M}_{f}$ is the sum of two independent random series, denoted in what follows by $\mathcal{M}_f^{(c)}(t)$ and $\mathcal{M}_f^{(r)}(t)$, each of which is itself a sum of mutually independent random variables. By Kolmogorov's two series theorem they both converge a.s. for every fixed $t\in\mathbb{R}$. A less obvious fact is that they both converge with probability one {\em almost everywhere} in $t$ with respect to Lebesgue measure on $\mathbb{R}$, see p.~296-297 in~\cite{Jessen} and, therefore, define a random function $\mathcal{M}_{f}$. By Theorem 1 in~\cite{Hunt}, $\mathcal{M}_{f}\in \bigcap_{p\geq 1} L_p^{{\rm loc}}(\mathbb{R},\mathbb{C})$ with probability one. Thus, we can write
\begin{align*}
\mathcal{M}_{f}^{(c)}(t)~=~\sum_{k\geq 1}\left(\mathcal{U}_{k}^{(c)}f_{\lambda^{(c)}_k}\ee^{\ii \lambda_k^{(c)} t}+\overline{\mathcal{U}_{k}^{(c)}}f_{-\lambda^{(c)}_k}\ee^{-\ii \lambda_k^{(c)} t}\right),\quad\mathcal{M}_{f}^{(r)}(t)~=~\sum_{k\in\Za}\mathcal{U}_{k}^{(r)}f_{\lambda_k^{(r)}}\ee^{\ii \lambda_k^{(r)} t},\\
\end{align*}
and
$$
\mathcal{M}_{f}(t)=\mathcal{M}_{f}^{(c)}(t)+\mathcal{M}_{f}^{(r)}(t),
$$
with $=$ instead of $\sim$ everywhere, and regard $\mathcal{M}_{f}^{(c)}$, $\mathcal{M}_{f}^{(r)}$ and $\mathcal{M}_{f}$ as random functions on $\mathbb{R}$, rather than just elements of $B_2$.


\begin{theorem}\label{thm:ergodic}
Assume that $f\in B_2$ satisfies~\eqref{eq:lin_independence_gen}. The random process $(\mathcal{M}_{f}(t))_{t\in\mathbb{R}}$ is a centered stationary process with the covariance
\begin{equation}\label{eq:covariance_gen}
K_{\mathcal{M}_{f}}(s):=\E [\mathcal{M}_{f}(s) \overline{\mathcal{M}_{f}(0)}]=\sum_{\lambda\in\mathcal{S}(f)}|f_{\lambda}|^2\ee^{\ii \lambda s},\;\; \E [\mathcal{M}_{f}(s) \mathcal{M}_{f}(0)]=\sum_{k\geq 1}f_{\lambda_k^{(c)}}f_{-\lambda_k^{(c)}}\ee^{\ii \lambda_k^{(c)} s},\;\; s\in\mathbb{R}.
\end{equation}
Moreover, the process $\mathcal{M}_{f}$ is ergodic in the usual sense: with $\mathfrak{m}$ denoting the distribution of $\mathcal{M}_{f}(0)$,
$$
\lim_{T\to+\infty}\frac{1}{T}\int_0^T \phi(\mathcal{M}_{f}(t)){\rm d}t=
\E \phi(\mathcal{M}_{f}(0))=\int_{\mathbb{C}}\phi(x)\mathfrak{m}({\rm d}x)\quad\text{a.s.}
$$
for every $\phi\in L_1(\mathbb{C},\mathfrak{m})$. The process $\mathcal{M}_{f}$ is not mixing.
\end{theorem}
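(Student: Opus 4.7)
My plan is to address the four assertions — centering with stationarity, the two covariance identities, ergodicity, and failure of mixing — by exploiting the explicit Fourier representation of $\mathcal{M}_f$ and viewing the process as a factor of a Kronecker flow on a torus. Centering is immediate: since $\sum_{\lambda\in\mathcal{S}(f)}|f_{\lambda}|^2<\infty$, the Fourier series for $\mathcal{M}_f(t)$ converges in $L^2(\mathbb{P})$, so $\E$ passes through the sum and $\E\mathcal{U}_k^{(c)}=\E\mathcal{U}_k^{(r)}=0$ gives $\E\mathcal{M}_f(t)=0$. For stationarity I will check that the shifted Fourier coefficients $(\mathcal{V}_{\lambda} \ee^{\ii\lambda h})_{\lambda\in\mathcal{S}(f)}$ have the same joint distribution as $(\mathcal{V}_{\lambda})_{\lambda\in\mathcal{S}(f)}$; this follows from rotation invariance of the uniform law on $\mathbb{T}$ applied to the independent families $(\mathcal{U}_k^{(c)})_k$ and $(\mathcal{U}_k^{(r)})_k$, and respects the conjugate pairing because $\overline{\mathcal{U}_k^{(c)}}\,\ee^{-\ii\lambda_k^{(c)} h}=\overline{\mathcal{U}_k^{(c)} \ee^{\ii\lambda_k^{(c)} h}}$. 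The two covariance identities then follow by a termwise computation using the orthogonality relations $\E[\mathcal{V}_{\lambda}\overline{\mathcal{V}_{\mu}}]=\mathbbm{1}_{\{\lambda=\mu\}}$ and $\E[\mathcal{V}_{\lambda}\mathcal{V}_{\mu}]=\mathbbm{1}_{\{\lambda\in\mathcal{C}\cup(-\mathcal{C}),\,\mu=-\lambda\}}$, which are immediate consequences of $\E\mathcal{U}=\E\mathcal{U}^2=0$, $\E|\mathcal{U}|^2=1$ for a uniform $\mathcal{U}$ on $\mathbb{T}$ together with independence across indices and across the $(c)$ and $(r)$ families; exchange of $\E$ with the double sum is again justified by square-summability.

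For ergodicity, my plan is to realise $\mathcal{M}_f$ as a measurable factor of a Kronecker flow on a compact abelian group. Let $G:=\mathbb{T}^{\mathcal{C}}\times\mathbb{T}^{\mathcal{R}}$ carry its product Haar measure $\mu$, and define the continuous measure-preserving flow $(T_t)_{t\in\mathbb{R}}$ on $G$ by coordinatewise multiplication with $(\ee^{\ii\lambda_k^{(c)} t})_k$ and $(\ee^{\ii\lambda_k^{(r)} t})_k$. Let $F:G\to\mathbb{C}$ be the function obtained by substituting a point of $G$ into the Fourier series defining $\mathcal{M}_f(0)$; $F$ is defined $\mu$-a.s.\ by the Jessen-type almost-sure convergence cited earlier, and with $\mathcal{W}\sim\mu$ the identification $\mathcal{M}_f(t)=F(T_t\mathcal{W})$ holds for Lebesgue-a.e.\ $t$, almost surely. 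To prove that $(T_t,\mu)$ is ergodic I will test the shift against characters of $G$: the character indexed by finitely supported $(m,n)\in\bigoplus_{\mathcal{C}}\mathbb{Z}\oplus\bigoplus_{\mathcal{R}}\mathbb{Z}$ is multiplied under $T_t$ by $\ee^{\ii t(\sum_k m_k\lambda_k^{(c)}+\sum_k n_k\lambda_k^{(r)})}$, which is identically $1$ in $t$ only if $\sum_k m_k\lambda_k^{(c)}+\sum_k n_k\lambda_k^{(r)}=0$. Linear independence of $\mathcal{C}\cup\mathcal{R}$ over $\mathbb{Q}$, assumed in~\eqref{eq:lin_independence_gen}, then forces $(m,n)=0$, so only constants lie in the $T_t$-invariant subspace of $L^2(\mu)$ and the flow is ergodic. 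Birkhoff's ergodic theorem applied to $\phi\circ F\in L_1(\mu)$ yields the stated a.s.\ limit for every $\phi\in L_1(\mathbb{C},\mathfrak{m})$, because the push-forward of $\mu$ by $F$ is $\mathfrak{m}$.

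Non-mixing follows directly from the covariance formula. Since $\sum_{\lambda}|f_{\lambda}|^2<\infty$, the series $K_{\mathcal{M}_f}(s)=\sum_{\lambda\in\mathcal{S}(f)}|f_{\lambda}|^2 \ee^{\ii\lambda s}$ converges absolutely and uniformly in $s\in\mathbb{R}$, hence $K_{\mathcal{M}_f}$ is a Bohr uniformly almost periodic function. With $\varepsilon=K_{\mathcal{M}_f}(0)/2=\|f\|_{M_2}^2/2>0$ the $\varepsilon$-almost periods of $K_{\mathcal{M}_f}$ form a relatively dense subset of $\mathbb{R}$, so one can pick $t_n\to+\infty$ with $|K_{\mathcal{M}_f}(t_n)-K_{\mathcal{M}_f}(0)|<K_{\mathcal{M}_f}(0)/2$; in particular $K_{\mathcal{M}_f}(t_n)\not\to 0$. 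But mixing of a centered $L^2$-stationary process would force $K_{\mathcal{M}_f}(t)\to 0$ as $t\to\infty$ (via the observables $x\mapsto x$ and $x\mapsto\overline{x}$), a contradiction. I expect the main obstacle to be the setup of the ergodic step: verifying joint measurability of $(t,\omega)\mapsto F(T_t\mathcal{W}(\omega))$ and the identification $\mathcal{M}_f(t)\stackrel{\text{a.s.}}{=}F(T_t\mathcal{W})$ in a sense strong enough to invoke the continuous-time Birkhoff theorem; once in place, the spectral/Kronecker argument based on~\eqref{eq:lin_independence_gen} is routine.
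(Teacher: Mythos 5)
Your proof is correct, and the interesting divergence from the paper is in the ergodicity step. The paper does not establish ergodicity of the underlying flow directly: it invokes Birkhoff's theorem to get a.s.\ convergence to \emph{some} limit and then identifies that limit by a three-step approximation --- an $L_1$-martingale argument showing that functions of finitely many coordinates are dense in $L_1(\mathbb{T}_f^{(c)},\mathfrak{H})$, the uniform bound (operator norm $\le 1$) on the averaging operators $\mathcal{A}_f^T$, and the finite-dimensional continuous Kronecker--Weyl equidistribution theorem on the dense set. You instead prove outright that the Kronecker flow on the infinite torus is ergodic by testing invariance against characters: a character indexed by a finitely supported integer vector is carried by $T_t$ onto itself times $\ee^{\ii t(\sum_k m_k\lambda_k^{(c)}+\sum_k n_k\lambda_k^{(r)})}$, and~\eqref{eq:lin_independence_gen} forces this phase to be nonconstant unless the character is trivial, so the $T_t$-invariant subspace of $L^2$ consists of constants and a single application of Birkhoff finishes the argument. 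Your route is shorter and identifies the limit and the ergodicity of the flow in one stroke; the paper's route buys the ability to work only with equidistribution on finite-dimensional tori and makes the $L_1$-approximation explicit. You also treat the $(c)$ and $(r)$ coordinates inside one group $G$ rather than splitting $\mathcal{M}_f=\mathcal{M}_f^{(c)}+\mathcal{M}_f^{(r)}$ and invoking independence as the paper does; this is legitimate because $\overline{\mathcal{U}_k^{(c)}}$ is a function of $\mathcal{U}_k^{(c)}$ and the whole family is Haar-distributed on $G$. The stationarity argument (rotation invariance of the uniform law on $\mathbb{T}$) and the non-mixing argument (the covariance is a nonzero Bohr almost periodic function, hence does not tend to $0$) coincide with the paper's. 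One small remark on the covariance: your orthogonality relation $\E[\mathcal{V}_\lambda\mathcal{V}_\mu]=\1_{\{\lambda\in\mathcal{C}\cup(-\mathcal{C}),\,\mu=-\lambda\}}$ yields $\E[\mathcal{M}_f(s)\mathcal{M}_f(0)]=\sum_{\lambda\in\mathcal{C}\cup(-\mathcal{C})}f_\lambda f_{-\lambda}\ee^{\ii\lambda s}$, i.e.\ both exponentials $\ee^{\pm\ii\lambda_k^{(c)}s}$ appear; the display~\eqref{eq:covariance_gen} records only the positive-frequency half, so your termwise computation is actually the more complete one.
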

\begin{rem}
Formula~\eqref{eq:covariance_gen} together with $\sum_{\lambda\in\mathcal{S}(f)}|f_{\lambda}|^2<\infty$ demonstrates that $K_{\mathcal{M}_{f}}(s)$ is a uniform over $s\in\mathbb{R}$ limit of generalized trigonometric polynomials. Thus, $K_{\mathcal{M}_{f}}$ is Bohr's uniformly almost periodic function.
\end{rem}
\begin{proof}
Formulae~\eqref{eq:covariance_gen} follow by direct calculations using that $\E|\mathcal{U}_{\lambda}|^2=1$ and $\E \mathcal{U}_{\lambda}=\E \mathcal{U}_{\lambda}^2=\E \overline{\mathcal{U}_{\lambda}^2}=0$ for every $\lambda\in\mathcal{C}\cup\mathcal{R}$.

It suffices to justify the remaining claims separately for $\mathcal{M}_{f}^{(c)}$ and $\mathcal{M}_{f}^{(r)}$ and then use independence. We only do this for $\mathcal{M}_{f}^{(c)}$, leaving the easier case of $\mathcal{M}_{f}^{(r)}$ to the reader. We first check stationarity. By rotational invariance and independence,
$$
(\mathcal{U}_{k}^{(c)}\ee^{\ii b_{k}})_{k\geq 1}~\od~(\mathcal{U}^{(c)}_{k})_{k\geq 1}
$$
for any collection of real numbers $(b_{k})_{k\geq 1}$. Thus, for every fixed $s\in\R$, 
\begin{multline*}
(\mathcal{M}_{f}^{(c)}(t+s))_{t\in\R}=\left(\sum_{k\geq 1}\left(\mathcal{U}_{k}^{(c)}\ee^{\ii\lambda_k^{(c)} s}f_{\lambda_k^{(c)}}\ee^{\ii \lambda_k^{(c)} t}+\overline{\mathcal{U}_{k}^{(c)}\ee^{\ii\lambda_{k}^{(c)} s}}f_{-\lambda_k^{(c)}}\ee^{-\ii \lambda_k^{(c)} t}\right)\right)_{t\in\R}\\
\od~\left(\sum_{k\geq 1}\left(\mathcal{U}_{k}^{(c)}f_{\lambda_k^{(c)}}\ee^{\ii \lambda_k^{(c)} t}+\overline{\mathcal{U}_{k}}f_{-\lambda_k^{(c)}}\ee^{-\ii \lambda_k^{(c)} t}\right)\right)_{t\in\R}=(\mathcal{M}_{f}^{(c)}(t))_{t\in\R}.
\end{multline*}
In order to prove ergodicity of $\mathcal{M}_f^{(c)}$, recall that, under 
~\eqref{eq:lin_independence_gen}, $(\mathcal{U}_k^{(c)})_{k\geq 1}$ is distributed according to the Haar measure on the compact Abelian group
$$
\mathbb{T}_f^{(c)}:=\bigtimes_{\lambda\in\mathcal{C}}\mathbb{T}_{\lambda}=\bigtimes_{k\geq 1}\mathbb{T}_{\lambda_k^{(c)}}.
$$
Denote this Haar measure by $\mathfrak{H}$. The triple $(\mathbb{T}_f^{(c)},\mathcal{B}(\mathbb{T}_f^{(c)}),\mathfrak{H})$ is a probability space and, on this space, $(\mathcal{U}_k^{(c)})_{k\geq 1}$ may be defined as the identity mapping. The mappings
$$
T^f_t:\mathbb{T}_f^{(c)}\mapsto \mathbb{T}_f^{(c)},\quad T^f_t((z_{\lambda_k^{(c)}})_{k\geq 1}):=(\ee^{\ii \lambda_k^{(c)} t}z_{\lambda_k^{(c)}})_{k\geq 1}, \quad t\in\mathbb{R}
$$
preserve the measure $\mathfrak{H}$ and form a flow $(T^f_t)_{t\in\mathbb{R}}$ which satisfies $T^f_0={\rm Id}$, $T^f_{t+s}=T^f_t \circ T^f_s$, $t,s\in\mathbb{R}$. Note that each $T^f_t$ is an infinite direct product of irrational rotations. Let $g_{f}:\mathbb{T}_f^{(c)}\to\mathbb{C}$ be a function defined by
$$
g_{f}((z_{\lambda_k^{(c)}})_{k\geq 1}=\sum_{k\geq 1}\big(f_{\lambda_k^{(c)}} z_{\lambda_k^{(c)}}+f_{-\lambda_k^{(c)}}\overline{z_{\lambda_k^{(c)}}}\big).
$$
In view of 
$\sum_{k\geq 1}(|f_{\lambda_k^{(c)}}|^2+|f_{-\lambda_k^{(c)}}|^2)\leq\sum_{\lambda\in\mathcal{S}(f)}|f_{\lambda}|^2<\infty$, the mapping $g_{f}$ is well-defined for $\mathfrak{H}$-almost all $(z_{\lambda})_{\lambda\in\mathcal{C}}\in \mathbb{T}_f^{(c)}$ (for $\mathfrak{H}$-almost all realizations ${\bf z}:=(z_{\lambda_k^{(c)}})_{k\geq 1}$ of $(\mathcal{U}_k^{(c)})_{k\geq 1}$) by Kolmogorov's two series theorem.

Observe that $\mathcal{M}_{f}^{(c)}(t)=g_{f}(T^f_t((\mathcal{U}_k^{(c)})_{k\geq 1}))$, $t\in\mathbb{R}$. We need to check that, for every $\phi\in L_1(\mathbb{C},\mathfrak{m})$,
\begin{equation}\label{eq:qrgodicity_proof1}
\lim_{T\to+\infty}\frac{1}{T}\int_0^T (\phi\circ g_{f})(T^f_t((\mathcal{U}_k^{(c)})_{k\geq 1})){\rm d}t=\E \phi(\mathcal{M}_{f}^{(c)}(0))\quad\mathfrak{H}-\text{a.s.}
\end{equation}
Assume we have already proved that, for every $h:\mathbb{T}_f^{(c)}\to\mathbb{C}$ satisfying $h\in L_1(\mathbb{T}_f^{(c)},\mathfrak{H})$, 
\begin{equation}\label{eq:qrgodicity_proof2}
\lim_{T\to+\infty}\frac{1}{T}\int_0^T h(T^f_t((\mathcal{U}_k^{(c)})_{k\geq 1})){\rm d}t=\int_{\mathbb{T}_f^{(c)}}h({\bf z})\mathfrak{H}({\rm d}{\bf z})\quad\mathfrak{H}-\text{a.s.}
\end{equation}
Then limit relation~\eqref{eq:qrgodicity_proof1} follows by choosing $h=\phi\circ g_{f}$ and noting that
$$
\int_{\mathbb{T}_f^{(c)}}\phi(g_{f}({\bf z}))\mathfrak{H}({\rm d}{\bf z})=\E \phi(\mathcal{M}_{f}^{(c)}(0)).
$$

Now we prove~\eqref{eq:qrgodicity_proof2}. First of all note that the $\mathfrak{H}-\text{a.s.}$ convergence to {\em some} limit holds by Birkhoff's theorem and we only need to verify that this limit is given by the right-hand side of~\eqref{eq:qrgodicity_proof2}. We shall do this in a sequence of steps. Observe that, for every fixed $T>0$, the mapping
$$
L^1(\mathbb{T}_f^{(c)},\mathfrak{H})\ni h\longmapsto \frac{1}{T}\int_0^T h(T^f_t(\cdot)){\rm d}t\in L^1(\mathbb{T}_f^{(c)},\mathfrak{H})=:\mathcal{A}^T_f(h),\quad T > 0,
$$
is a linear operator on $L^1(\mathbb{T}_f^{(c)},\mathfrak{H})$ with the operator norm bounded by $1$.

\vspace{2mm}
\noindent
{\sc Step 1.} We shall first verify that the set of measurable functions from $\mathbb{T}_f^{(c)}$ to $\mathbb{C}$, which depend only on finitely many coordinates, is dense in $L_1(\mathbb{T}_f^{(c)},\mathfrak{H})$ and hence the subset of continuous functions, which depend only on finitely many coordinates, is also dense in $L_1(\mathbb{T}_f^{(c)},\mathfrak{H})$. To see this, fix $h\in L_1(\mathbb{T}_f^{(c)},\mathfrak{H})$ and, for $n\in\mathbb{N}$, let $\mathcal{F}_n$ be the $\sigma$-algebra generated by $\mathcal{U}^{(c)}_{1},\ldots,\mathcal{U}^{(c)}_{n}$. The sequence of random variables
$$
\E_{\mathfrak{H}}[h((\mathcal{U}_k^{(c)})_{k\geq 1})|\mathcal{F}_n],\quad n\in\mathbb{N}
$$
is an $L_1(\mathbb{T}_f^{(c)},\mathfrak{H})$-bounded martingale and converges in $L_1(\mathbb{T}_f^{(c)},\mathfrak{H})$ to $h((\mathcal{U}_k^{(c)})_{k\geq 1})$, as $n\to\infty$, see Proposition 10.8.6 in~\cite{Resnick}. This can be recast as
$$
\lim_{n\to\infty}\int_{\mathbb{T}_f^{(c)}}|h_n(z_{\lambda_1^{(c)}},\ldots,z_{\lambda_n^{(c)}})-h({\bf z})|\mathfrak{H}({\rm d}{\bf z})=0,
$$
where
$$
h_n(u_1,\ldots,u_n):=\int_{\mathbb{T}_f^{(c)}}h(u_1,\ldots,u_n,z_{n+1},z_{n+2},\ldots)\mathfrak{H}({\rm d} {\bf z}),\quad n\in\N.
$$

\vspace{2mm}
\noindent
{\sc Step 2.} Assume we have proved~\eqref{eq:qrgodicity_proof2} on the dense subset from Step 1, that is,
\begin{equation}\label{eq:Kronecker-Weyl}
\lim_{T\to+\infty}\frac{1}{T}\int_0^T \widetilde{h}(\mathcal{U}_{1}^{(c)}\ee^{\ii \lambda_{1}^{(c)} t},\ldots,\mathcal{U}^{(c)}_{n}\ee^{\ii \lambda_n^{(c)} t}){\rm d}t=\int_{\mathbb{T}_f^{(c)}(n)}\widetilde{h}(z_{1},\ldots,z_{n}){\rm d}z_{1}\cdots{\rm d}z_n\quad\mathfrak{H}-\text{a.s.}
\end{equation}
for any continuous $\widetilde{h}:\mathbb{T}_f^{(c)}(n) \to \mathbb{C}$, where $\mathbb{T}_f^{(c)}(n):=\mathbb{T}_{\lambda^{(c)}_{1}}\times\cdots\times \mathbb{T}_{\lambda^{(c)}_{n}}$. In other words, that the sequence of linear operators $\mathcal{A}_f^T$ converges $\mathfrak{H}-\text{a.s.}$ pointwise on a dense subset of $L_1(\mathbb{T}_f^{(c)},\mathfrak{H})$ to the right-hand side of~\eqref{eq:qrgodicity_proof2}, as $T\to+\infty$. Since $h$ in our dense subset are continuous we infer, using the Lebesgue dominated convergence theorem, that $\mathcal{A}_f^T$ converges to the right-hand side of~\eqref{eq:qrgodicity_proof2} also in $L_1(\mathbb{T}_f^{(c)},\mathfrak{H})$ pointwise on a dense subset of $L_1(\mathbb{T}_f^{(c)},\mathfrak{H})$, as $T\to+\infty$. Since the operator norms of $\mathcal{A}_f^T$ are bounded by $1$, this convergence also holds on the whole space $L_1(\mathbb{T}_f^{(c)},\mathfrak{H})$. This argument identifies the limit in~\eqref{eq:qrgodicity_proof2}.

\vspace{2mm}
\noindent
{\sc Step 3. Proof of~\eqref{eq:Kronecker-Weyl}.} Formula~\eqref{eq:Kronecker-Weyl} follows by the continuous multidimensional Kronecker-Weyl equidistribution theorem, see, for example, Eq.~(1.21) in~\cite{Beck:2018}, with the right-hand side replaced by
$$
\int_{\mathbb{T}_f^{(c)}(n)} \widetilde{h}(\mathcal{U}_{1}^{(c)} z_{1},\ldots,\mathcal{U}_{n}^{(c)} z_n){\rm d}z_{1}\cdots{\rm d}z_n.
$$
This theorem is applicable because $\{\lambda_{1}^{(c)},\ldots,\lambda_n^{(c)}\}$ are assumed linearly independent over $\mathbb{Q}$. It remains to note that changing the variables in the  expression above gives
$$
\int_{\mathbb{T}_f^{(c)}(n)} \widetilde{h}(\mathcal{U}_{1}^{(c)} z_{1},\ldots,\mathcal{U}_{n}^{(c)} z_n){\rm d}z_{1}\cdots{\rm d}z_n=\int_{\mathbb{T}_f^{(c)}(n)} \widetilde{h}(z_{1},\ldots,z_n){\rm d}z_{1}\cdots{\rm d}z_n,\quad\mathfrak{H}-\text{a.s.}
$$
This proves~\eqref{eq:Kronecker-Weyl} and ergodicity of $\mathcal{M}_f^{(c)}$.

It remains to prove that $\mathcal{M}_{f}$ is not mixing. To this end, it suffices to check that the covariance function does not converge to $0$, as $s\to+\infty$. But this is clearly the case, since $K_{\mathcal{M}_{f}}$ is
non-zero Bohr's almost periodic and does not have a limit as $s\to+\infty$.
\end{proof}

The next result demonstrates that the choice of notation for the limiting random variable in~\eqref{eq:one_dim_convergence} was not coincident.

\begin{prop}\label{prop:convergence_at_fixed_point}
Assume that $f\in B_2$ satisfies~\eqref{eq:lin_independence_gen}. Then
$$
f(V_{-L,L})~\todistr~\mathcal{M}_{f}(0)=\sum_{k\geq 1}\left(\mathcal{U}_{k}^{(c)}f_{\lambda_k^{(c)}}+\overline{\mathcal{U}_{k}^{(c)}}{f_{-\lambda_k^{(c)}}}\right)+\sum_{k\in\Za}\mathcal{U}_{k}^{(r)}f_{\lambda_k^{(r)}},\quad L\to\infty,
$$
with the series on the right-hand side being a.s.~convergent.
\end{prop}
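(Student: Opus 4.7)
The plan is to establish two things: (a) almost sure convergence of the random series on the right-hand side, and (b) identification of the distributional limit in~\eqref{eq:one_dim_convergence} as this series. Part (a) is immediate from Kolmogorov's two-series theorem: the summands $\mathcal{U}_k^{(c)}f_{\lambda_k^{(c)}}+\overline{\mathcal{U}_k^{(c)}}f_{-\lambda_k^{(c)}}$ (for $k\ge 1$) and $\mathcal{U}_k^{(r)}f_{\lambda_k^{(r)}}$ (for $k\in\Za$) are independent, centered, and have variances bounded above by $|f_{\lambda_k^{(c)}}|^2+|f_{-\lambda_k^{(c)}}|^2$ and $|f_{\lambda_k^{(r)}}|^2$ respectively, whose total sum equals $\sum_{\lambda\in\mathcal{S}(f)}|f_\lambda|^2<\infty$ by~\eqref{eq:square_suumable}.

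For (b), the \emph{main obstacle} is that point evaluation at $t=0$ is not well-defined on $B_2(\mathcal{S}(f))$, since its elements are equivalence classes modulo $Z$; hence Theorem~\ref{thm:conv_in_B2} does not directly produce convergence of the pointwise values $f(V_{-L,L})$. To bypass this I would pass through finite-spectrum approximations. Let $(P_n)_{n\ge 1}$ be the Bochner--Fej\'{e}r polynomials associated with $f$, which satisfy $\mathcal{S}(P_n)\subset\mathcal{S}(f)$ finite and $\|f-P_n\|_{M_2}\to 0$. For fixed $n$, $P_n(V_{-L,L})$ is a finite linear combination of the $\ee^{\ii\lambda V_{-L,L}}$ with $\lambda\in\mathcal{S}(P_n)$, so the joint convergence on the finite torus $\mathbb{T}^{|\mathcal{S}(P_n)|}$ recalled before Theorem~\ref{thm:conv_in_B2} combined with the continuous mapping theorem yields
$$
P_n(V_{-L,L})~\todistr~\mathcal{M}_{P_n}(0),\quad L\to+\infty,
$$
where $\mathcal{M}_{P_n}(0)$ is the finite partial sum arising from the series on the right-hand side of the proposition.

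The final step is a standard triangular-approximation argument based on three ingredients: (i) $\mathcal{M}_{P_n}(0)\to\mathcal{M}_f(0)$ a.s.\ as $n\to\infty$, which follows from part (a); (ii) the just-established weak convergence $P_n(V_{-L,L})\todistr \mathcal{M}_{P_n}(0)$ as $L\to\infty$ for each fixed $n$; and (iii) the uniform-in-$L$ $L^2$-closeness of $f(V_{-L,L})$ and $P_n(V_{-L,L})$, which follows from the identity
$$
\E|f(V_{-L,L})-P_n(V_{-L,L})|^2=\frac{1}{2L}\int_{-L}^L|f(t)-P_n(t)|^2\,\d t,
$$
whose $\limsup$ as $L\to+\infty$ equals $\|f-P_n\|_{M_2}^2\to 0$. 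Combining (i)--(iii) via Chebyshev's inequality and the standard approximation lemma for weak convergence (Theorem~3.2 in Billingsley's \emph{Convergence of Probability Measures}) identifies the limit in~\eqref{eq:one_dim_convergence} as $\mathcal{M}_f(0)$ and completes the proof.
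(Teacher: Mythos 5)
Your proposal is correct and follows essentially the same route as the paper: approximate $f$ in the $\|\cdot\|_{M_2}$-seminorm by trigonometric polynomials with finite spectrum in $\mathcal{S}(f)$, use the Kronecker--Weyl/torus convergence for each fixed polynomial, control the error uniformly in $L$ via the identity $\E|f(V_{-L,L})-P_n(V_{-L,L})|^2=\frac{1}{2L}\int_{-L}^{L}|f-P_n|^2$, and conclude with Billingsley's Theorem~3.2 (the paper uses the raw partial sums $p_n$ of the Fourier series, justified by Theorem~14(i) in~\cite{Duy:2013}, where you use Bochner--Fej\'er polynomials). One small imprecision: the Bochner--Fej\'er polynomial $P_n$ has Fourier coefficients $d^{(n)}_\lambda f_\lambda$ with multipliers $d^{(n)}_\lambda\in[0,1]$, so $\mathcal{M}_{P_n}(0)$ is not literally a partial sum of the limiting series and your step (i) does not follow verbatim from Kolmogorov's two-series theorem; it is, however, immediately repaired by noting $\E|\mathcal{M}_{P_n}(0)-\mathcal{M}_f(0)|^2=\sum_{\lambda}|1-d^{(n)}_\lambda|^2|f_\lambda|^2\to 0$, or by using the partial sums as in the paper.
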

\begin{proof}
Let $f(t)~\sim~\sum_{\lambda\in\mathcal{S}(f)}f_{\lambda}\ee^{\ii \lambda t}$ be the formal Fourier series of $f$. The sequence of trigonometric polynomials
$$
p_n(t)=\sum_{k=1}^{n}f_{\lambda_k^{(c)}}\ee^{\ii \lambda_k^{(c)} t}+ \sum_{k=1}^{n}f_{-\lambda_k^{(c)}}\ee^{-\ii \lambda_k^{(c)} t} + \sum_{k=-n,k\neq 0}^{n}f_{\lambda_k^{(r)}}\ee^{\ii \lambda_k^{(r)} t},\quad n\in\mathbb{N},
$$
satisfies (by Theorem~14~(i) in~\cite{Duy:2013})
$$
\lim_{n\to\infty}\|f-p_n\|^2_{M_2}=\lim_{n\to\infty}\limsup_{L\to+\infty}\mathbb{E}\left|f(V_{-L,L})-p_n(V_{-L,L})\right|^2=0.
$$
For every fixed $n\in\mathbb{N}$, the set $\{\lambda_1^{(c)},\ldots,\lambda_n^{(c)},\lambda_1^{(r)},\ldots,\lambda_n^{(r)}\}$ is linearly independent over $\mathbb{Q}$. Thus, by the continuous multidimensional Kronecker-Weyl equidistribution theorem, see, for example, Eq.~(1.21) in~\cite{Beck:2018},
\begin{multline*}
p_n(V_{-L,L})=
\sum_{k=1}^{n}f_{\lambda_k^{(c)}}\ee^{\ii \lambda_k^{(c)} V_{-L,L}}+ \sum_{k=1}^{n}f_{-\lambda_k^{(c)}}\ee^{-\ii \lambda_k^{(c)} V_{-L,L}} + \sum_{k=-n,k\neq 0}^{n}f_{\lambda_k^{(r)}}\ee^{\ii \lambda_k^{(r)} V_{-L,L}}\\
~\todistr~\sum_{k=1}^{n}f_{\lambda_k^{(c)}}\mathcal{U}_k^{(c)}+ \sum_{k=1}^{n}f_{-\lambda_k^{(c)}}\overline{\mathcal{U}_k^{(c)}} + \sum_{k=-n,k\neq 0}^{n}f_{\lambda_k^{(r)}}\mathcal{U}_k^{(r)},\quad L\to+\infty.
\end{multline*}
It remains to note that, by Kolmogorov's two series theorem,
$$
\sum_{k=1}^{n}f_{\lambda_k^{(c)}}\mathcal{U}_k^{(c)}+ \sum_{k=1}^{n}f_{-\lambda_k^{(c)}} \overline{\mathcal{U}_k^{(c)}} + \sum_{k=-n,k\neq 0}^{n}f_{\lambda_k^{(r)}}\mathcal{U}_k^{(r)}~\toas~
\sum_{k\geq 1}f_{\lambda_k^{(c)}}\mathcal{U}_k^{(c)}+ \sum_{k\geq 1}f_{-\lambda_k^{(c)}}\overline{\mathcal{U}_k^{(c)}} +\sum_{k\in\Za}f_{\lambda_k^{(r)}}\mathcal{U}_k^{(r)},
$$
as $n\to\infty$, and apply Theorem 3.2 in~\cite{Billingsley}.
\end{proof}

The convergence of $f(V_{-L,L}+\cdot)$ on the space $B_2(\mathcal{S}(f))$, settled in Theorem~\ref{thm:conv_in_B2}, is a rather weak type of convergence of random processes, for it does not even  entail convergence in distribution at a fixed point. Proposition~\ref{prop:convergence_at_fixed_point} addresses this by establishing convergence in distribution at the fixed time $t=0$. Moreover, a simple argument can be used to extend this result to the convergence of any finite-dimensional distributions.
\begin{corollary}\label{cor:fidis_convergence}
Assume that $f\in B_2$ satisfies~\eqref{eq:lin_independence_gen}. Then
$$
(f(V_{-L,L}+t))_{t\in\mathbb{R}}~\Longrightarrow~(\mathcal{M}_{f}(t))_{t\in\mathbb{R}},\quad L\to+\infty
$$
in the sense of finite-dimensional distributions.
\end{corollary}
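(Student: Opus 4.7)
The plan is to reduce the claim to Proposition~\ref{prop:convergence_at_fixed_point} via the Cram\'er--Wold device. Fix $m\in\mathbb{N}$, real numbers $t_1,\ldots,t_m$ and complex coefficients $c_1,\ldots,c_m$, and introduce
$$
\tilde{h}(s):=\sum_{j=1}^{m}c_j f(s+t_j),\qquad s\in\mathbb{R}.
$$
Translation preserves $B_2$, so $\tilde{h}\in B_2$, and~\eqref{eq:Fourier_coefficient_shifted} gives the Fourier coefficients $\tilde{h}_\lambda=f_\lambda\sum_{j=1}^{m}c_j\ee^{\ii\lambda t_j}$ for $\lambda\in\mathcal{S}(f)$; in particular $\mathcal{S}(\tilde{h})\subseteq\mathcal{S}(f)$, so $0\notin\mathcal{S}(\tilde{h})$ and $\mathcal{S}(\tilde{h})$ has no finite accumulation points.

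Before invoking Proposition~\ref{prop:convergence_at_fixed_point} on $\tilde{h}$ I would verify that~\eqref{eq:lin_independence_gen} is inherited. Put $\mathcal{C}_{\tilde{h}}:=\{\lambda\in\mathcal{C}\cap\mathcal{S}(\tilde{h}):-\lambda\in\mathcal{S}(\tilde{h})\}$ and let $\mathcal{R}_{\tilde{h}}$ collect $\mathcal{R}\cap\mathcal{S}(\tilde{h})$ together with every $\lambda\in(\mathcal{C}\cup(-\mathcal{C}))\cap\mathcal{S}(\tilde{h})$ whose negative does not lie in $\mathcal{S}(\tilde{h})$. By construction $\mathcal{C}_{\tilde{h}}\cup\mathcal{R}_{\tilde{h}}$ selects at most one representative from each pair $\{\mu,-\mu\}$ with $\mu\in\mathcal{C}\cup\mathcal{R}$, so the $\mathbb{Q}$-linear independence of $\mathcal{C}\cup\mathcal{R}$ transfers to $\mathcal{C}_{\tilde{h}}\cup\mathcal{R}_{\tilde{h}}$. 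Proposition~\ref{prop:convergence_at_fixed_point} then yields
$$
\sum_{j=1}^{m}c_j f(V_{-L,L}+t_j)=\tilde{h}(V_{-L,L})~\todistr~\mathcal{M}_{\tilde{h}}(0),\qquad L\to+\infty.
$$

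The crucial step is to identify $\mathcal{M}_{\tilde{h}}(0)$ with $\sum_{j=1}^{m}c_j\mathcal{M}_f(t_j)$ in distribution. Interchanging the finite outer sum with the (almost surely convergent) series for each $\mathcal{M}_f(t_j)$ yields
$$
\sum_{j=1}^{m}c_j\mathcal{M}_f(t_j)=\sum_{\lambda\in\mathcal{S}(\tilde{h})}\mathcal{V}_\lambda\tilde{h}_\lambda,
$$
with the $\mathcal{V}_\lambda$ coming from the decomposition of $\mathcal{S}(f)$, whereas $\mathcal{M}_{\tilde{h}}(0)=\sum_{\lambda\in\mathcal{S}(\tilde{h})}\mathcal{V}_\lambda^{\tilde{h}}\tilde{h}_\lambda$ with phases attached to the decomposition of $\mathcal{S}(\tilde{h})$ from the previous paragraph. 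A short case analysis shows that in both constructions every pair $\{\lambda,-\lambda\}\subseteq\mathcal{S}(\tilde{h})$ contributes a conjugate couple $(\mathcal{U},\overline{\mathcal{U}})$ with $\mathcal{U}$ uniform on $\mathbb{T}$, every $\lambda\in\mathcal{S}(\tilde{h})$ with $-\lambda\notin\mathcal{S}(\tilde{h})$ contributes an independent uniform phase, and different pairs/unpaired indices are jointly independent. Hence $(\mathcal{V}_\lambda)_{\lambda\in\mathcal{S}(\tilde{h})}\od(\mathcal{V}_\lambda^{\tilde{h}})_{\lambda\in\mathcal{S}(\tilde{h})}$ and the identification holds in distribution.

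The conclusion follows from the Cram\'er--Wold device applied to $\mathbb{C}^m\simeq\mathbb{R}^{2m}$: every real linear functional on $\mathbb{R}^{2m}$ has the form $\mathbf{z}\mapsto\Re\sum_{j=1}^{m}c_j z_j$ for some $c_j\in\mathbb{C}$, and convergence in distribution of the complex linear combinations $\sum_j c_j f(V_{-L,L}+t_j)$ established above entails convergence of their real parts by continuous mapping. This delivers convergence of all finite-dimensional distributions. I expect the main obstacle to be the distributional matching of the two random-phase families in the identification of $\mathcal{M}_{\tilde{h}}(0)$ with $\sum_j c_j\mathcal{M}_f(t_j)$; once this bookkeeping is in place, together with the preliminary verification of~\eqref{eq:lin_independence_gen} for $\tilde h$, the corollary is immediate.
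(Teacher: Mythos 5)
Your proof is correct and takes essentially the same route as the paper: the Cram\'er--Wold device applied to $\tilde h(x)=\sum_j c_j f(x+t_j)\in B_2(\mathcal{S}(f))$, followed by an application of Proposition~\ref{prop:convergence_at_fixed_point} and identification of the limit. The only difference is that you spell out two points the paper leaves implicit --- that condition~\eqref{eq:lin_independence_gen} is inherited by $\tilde h$ even when some Fourier coefficients cancel, and that the random phases built from the decomposition of $\mathcal{S}(\tilde h)$ agree in distribution with those restricted from $\mathcal{S}(f)$ --- which is a welcome, if minor, tightening.
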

\begin{proof}
Fix $m\in\N$, $-\infty<t_1<\cdots<t_m<\infty$ and $\alpha_1,\ldots,\alpha_m\in\C$. According to the Cram\'{e}r-Wold device, it suffices to check that
\begin{multline}\label{eq:proof_fidis_C_R_conv}
\sum_{i=1}^{m}\alpha_i f(V_{-L,L}+t_i)~\todistr~\sum_{i=1}^{m}\alpha_i \mathcal{M}_{f}(t_i)\\
=\sum_{i=1}^{m}\alpha_i\left(\sum_{k\geq 1}\left(\mathcal{U}_k^{(c)}f_{\lambda_k^{(c)}}\ee^{\ii \lambda_k^{(c)} t_i}+\overline{\mathcal{U}_k^{(c)}}f_{-\lambda_k^{(c)}}\ee^{-\ii \lambda_k^{(c)} t_i}\right)+\sum_{k\in\Za}\mathcal{U}_k^{(r)}f_{\lambda_k^{(r)}}\ee^{\ii \lambda_k^{(r)} t_i}\right).
\end{multline}
The function $x\mapsto \sum_{i=1}^{m}\alpha_i f(x+t_i)=:\tilde{f}(x)$ belongs to the space $B_2(\mathcal{S}(f))$. According to Proposition~\ref{prop:convergence_at_fixed_point},
$$
\sum_{i=1}^{m}\alpha_i f(V_{-L,L}+t_i)~\todistr~\sum_{k\geq 1}(\mathcal{U}_k^{(c)}\tilde{f}_{\lambda_k^{(c)}}\ee^{\ii \lambda_k^{(c)} t_i}+\overline{\mathcal{U}_k^{(c)}}\tilde{f}_{-\lambda_k^{(c)}}\ee^{-\ii \lambda_k^{(c)} t_i})+\sum_{k\in\Za}\mathcal{U}_{k}^{(r)}\tilde{f}_{\lambda_k^{(r)}},\quad L\to\infty,
$$
where, for $k\geq 1$, $\widetilde{f}_{\pm\lambda_k^{(c)}}=f_{\pm\lambda_k^{(c)}}\sum_{i=1}^{m}\alpha_i \ee^{\pm\ii \lambda_k^{(c)} t_i}$ 
and $\widetilde{f}_{\lambda_k^{(r)}}=f_{\lambda_k^{(r)}}\sum_{i=1}^{m}\alpha_i \ee^{\pm\ii \lambda_k^{(r)} t_i}$ 
are the Fourier coefficients of $\tilde{f}$. This proves~\eqref{eq:proof_fidis_C_R_conv}.
\end{proof}

\subsection{Continuity of the limit process}

We shall now discuss continuity properties of the process $\mathcal{M}_f$. In general, the random process $\mathcal{M}_{f}$ might not be  continuous, even under assumption~\eqref{eq:lin_independence_gen}. A simple sufficient condition for continuity is
\begin{equation}\label{eq:fourier_coeffs_summable}
\sum_{\lambda\in\mathcal{S}(f)}|f_{\lambda}|<\infty,
\end{equation}
since the corresponding partial sums $\sum_{\lambda\in\mathcal{S}(f)\cap [-N,N]}f_{\lambda}\mathcal{V}_{\lambda} \ee^{\ii \lambda s}$ in this case converge to $\mathcal{M}_{f}(s)$ uniformly over $s\in\mathbb{R}$, as $N\to\infty$. Then $\mathcal{M}_{f}$ is also a Bohr almost periodic function with probability one.

If~\eqref{eq:fourier_coeffs_summable} does not hold we decompose $M_f$ into three sums
\begin{equation}\label{eq:continuity_decomposition}
\mathcal{M}_f(t)=\sum_{k\geq 1}\mathcal{U}_k^{(c)} f_{\lambda_k^{(c)}}\ee^{\ii \lambda_k^{(c)} t}+\sum_{k\geq 1}\overline{\mathcal{U}_k^{(c)}} f_{-\lambda_k^{(c)}}\ee^{\ii \lambda_k^{(c)} t}+\sum_{k\in\Za}\mathcal{U}_k^{(r)} f_{\lambda_k^{(r)}}\ee^{\ii \lambda_k^{(r)} t},\quad t\in\mathbb{R}
\end{equation}
and note that all of them are of the same type $\sum_{k}X_k\ee^{\ii \nu_k t}$ for a sequence $(X_k)$ of independent identically distributed random variables. For such random series, a sufficient condition of a.s.~sample path continuity can be found in Theorem 2 in~\cite{Hunt}. In the present case, this sufficient condition ensuring a.s.~continuity of $\mathcal{M}_f$ takes the form
\begin{equation*}
\sum_{k\geq 1}(|f_{\lambda_k^{(c)}}|^2+|f_{-\lambda_k^{(c)}}|^2)\log^{1+\varepsilon}_{+}|\lambda_k^{(c)}|+\sum_{k\in\Za}|f_{\lambda_k^{(r)}}|^2\log^{1+\varepsilon}_{+}|\lambda_k^{(r)}|<\infty\quad\text{for some}\quad \varepsilon>0
\end{equation*}
or, more compactly,
\begin{equation}\label{eq:hunt_continuous_suff}
\sum_{\lambda\in\mathcal{S}(f)}|f_{\lambda}|^2\log^{1+\varepsilon}_{+}|\lambda|<\infty\quad\text{for some}\quad \varepsilon>0.
\end{equation}
Assumptions~\eqref{eq:lambda_summatory_cond} and~\eqref{eq:a_summatory_cond} in the next result are variations of~\eqref{eq:hunt_continuous_suff} ensuring also H\"{o}lder 
continuity of the paths.
\begin{prop}\label{thm:continuity}
Suppose that $f\in B_2$ satisfies~\eqref{eq:lin_independence_gen}. Assume that for some $\alpha\in [0,2)$ and $\beta>0$,
\begin{equation}\label{eq:lambda_summatory_cond}
\sum_{\lambda\in\mathcal{S}(f)}\lambda^2 |f_{\lambda}|^2\1_{\{|\lambda|\leq x\}}=O(x^{\alpha}),\quad x\to+\infty,
\end{equation}
and also
\begin{equation}\label{eq:a_summatory_cond}
\sum_{\lambda\in\mathcal{S}(f)}|f_{\lambda}|^2\1_{\{|\lambda|>x\}}=O(x^{-\beta}),\quad x\to+\infty.
\end{equation}
Then the process $\mathcal{M}_{f}$ has a version with a.s.~locally $\gamma$-H\"{o}lder 
continuous paths for every $\gamma<\min(1-\alpha/2,\beta/2)$.
\end{prop}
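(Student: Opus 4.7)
The plan is a Kolmogorov--Chentsov argument: first compute the $L^2$-modulus of the increments of $\mathcal{M}_f$, then bootstrap to arbitrary polynomial moments via a Hoeffding-type bound, and finally invoke the continuity criterion. By the stationarity established in Theorem~\ref{thm:ergodic} it suffices to study the function
\begin{equation*}
\sigma^2(h) := \mathbb{E}\bigl|\mathcal{M}_f(h) - \mathcal{M}_f(0)\bigr|^2, \qquad h \in \mathbb{R}.
\end{equation*}
Expanding $\mathcal{M}_f(h) - \mathcal{M}_f(0)$ through the Fourier representation of $\mathcal{M}_f$ and using the fact that, for every unit-circle factor appearing in the series, $\mathbb{E}\mathcal{U}_{\lambda} = \mathbb{E}\mathcal{U}_{\lambda}^{2} = 0$, all off-diagonal cross terms cancel. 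This yields
\begin{equation*}
\sigma^2(h) = \sum_{\lambda \in \mathcal{S}(f)} |f_\lambda|^2 \bigl|\ee^{\ii \lambda h} - 1\bigr|^2.
\end{equation*}

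Next I quantify the decay of $\sigma^2(h)$ as $h \to 0$. Split the sum at the threshold $|\lambda| = 1/|h|$, using $|\ee^{\ii \lambda h} - 1|^{2} \leq \min(\lambda^2 h^2, 4)$. The low-frequency part is controlled by hypothesis~\eqref{eq:lambda_summatory_cond} to give $h^2 \cdot O(|h|^{-\alpha}) = O(|h|^{2-\alpha})$, while the high-frequency part is controlled by hypothesis~\eqref{eq:a_summatory_cond} to give $O(|h|^{\beta})$. Hence $\sigma^2(h) = O(|h|^{\delta})$ with $\delta := \min(2-\alpha, \beta) > 0$.

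Now I upgrade to higher moments. Decompose the increment as $\mathcal{M}_f(h) - \mathcal{M}_f(0) = \sum_{k\geq 1} X_k^{(c)} + \sum_{k \in \Za} X_k^{(r)}$, where
\begin{equation*}
X_k^{(c)} := \mathcal{U}_k^{(c)} f_{\lambda_k^{(c)}}(\ee^{\ii \lambda_k^{(c)} h} - 1) + \overline{\mathcal{U}_k^{(c)}} f_{-\lambda_k^{(c)}}(\ee^{-\ii \lambda_k^{(c)} h} - 1), \qquad X_k^{(r)} := \mathcal{U}_k^{(r)} f_{\lambda_k^{(r)}}(\ee^{\ii \lambda_k^{(r)} h} - 1).
\end{equation*}
These summands are mutually independent across $k$ and across the two families, have mean zero, and satisfy the uniform bounds $|X_k^{(c)}| \leq (|f_{\lambda_k^{(c)}}| + |f_{-\lambda_k^{(c)}}|)|\ee^{\ii \lambda_k^{(c)} h} - 1|$ and $|X_k^{(r)}| \leq |f_{\lambda_k^{(r)}}|\,|\ee^{\ii \lambda_k^{(r)} h} - 1|$. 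Applying Hoeffding's inequality separately to the real and imaginary parts (both real-valued series of independent, centered, bounded random variables) and using $(|f_{\lambda_k^{(c)}}|+|f_{-\lambda_k^{(c)}}|)^2 \leq 2(|f_{\lambda_k^{(c)}}|^2 + |f_{-\lambda_k^{(c)}}|^2)$, the sum of squared bounds is at most $2\sigma^2(h)$. The resulting sub-Gaussian tail bound and Fatou's lemma then give, for every integer $p \geq 1$,
\begin{equation*}
\mathbb{E}\bigl|\mathcal{M}_f(t+h) - \mathcal{M}_f(t)\bigr|^{2p} \leq C_p\, \sigma^2(h)^p \leq C_p'\, |h|^{p\delta}.
\end{equation*}

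Finally, for any prescribed $\gamma < \delta/2 = \min(1 - \alpha/2, \beta/2)$, choose an integer $p$ large enough that $p\delta - 1 > 2p\gamma$. The Kolmogorov--Chentsov continuity criterion then produces a version of $\mathcal{M}_f$ whose sample paths are locally $\gamma$-Hölder continuous on every compact interval; selecting a single version simultaneously for a countable sequence $\gamma_n \uparrow \delta/2$ delivers the claim. The only non-routine point is Step~3, since the coefficients in the Fourier series are unit-modulus rather than Gaussian, so higher moments do not drop out for free from the second moment; the structural observation that rescues the Hoeffding bound is that each pair $(\mathcal{U}_k^{(c)}, \overline{\mathcal{U}_k^{(c)}})$ is driven by a single angle, so full independence across $k$ is preserved even though within a pair the two factors are deterministically linked.
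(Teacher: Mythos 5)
Your argument is correct and follows the same overall strategy as the paper: compute the second moment of the increment, obtain
\begin{equation*}
\sigma^2(h)=\sum_{\lambda\in\mathcal{S}(f)}|f_{\lambda}|^2\,\bigl|\ee^{\ii\lambda h}-1\bigr|^2,
\end{equation*}
split the spectrum at $|\lambda|=|h|^{-1}$ to get $\sigma^2(h)=O(|h|^{\min(2-\alpha,\beta)})$ from \eqref{eq:lambda_summatory_cond} and \eqref{eq:a_summatory_cond}, bound the $2m$-th moment by $C_m\,\sigma^2(h)^m$, and close with Kolmogorov--Chentsov. The single substantive difference is the tool used for the higher-moment bound: the paper applies Rosenthal's inequality to each of the three independent series in \eqref{eq:continuity_decomposition} (observing that the Rosenthal maximum is attained by the $(\sum|c_k|^2)^m$ term, since $\sum|c_k|^{2m}\leq(\sum|c_k|^2)^m$), whereas you exploit the boundedness $|\mathcal{U}_\lambda|=1$ to get a Hoeffding sub-Gaussian tail for the real and imaginary parts and then integrate the tail. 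Both routes yield the identical estimate $\E|\mathcal{M}_f(t+h)-\mathcal{M}_f(t)|^{2m}\leq C_m'\,|h|^{m\min(2-\alpha,\beta)}$; your version gives marginally more (genuinely sub-Gaussian increments, hence amenable to chaining), while Rosenthal would survive a generalization to unbounded random coefficients with finite moments. Your closing remark correctly identifies why Hoeffding is applicable despite the deterministic linkage inside each pair $(\mathcal{U}_k^{(c)},\overline{\mathcal{U}_k^{(c)}})$: independence is only needed across $k$ and across the $(c)$/$(r)$ families, which holds under \eqref{eq:lin_independence_gen}.
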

\begin{proof}
The proof of Proposition~\ref{thm:continuity} is a standard application of the Kolmogorov-Chentsov theorem. First, observe that, by Rosenthal's inequality, see Theorem~3 in~\cite{Rosenthal_Subspaces:1970},
for some positive constant $C(m)>0$ which only depends on $m>1$,
\begin{align}
&\hspace{-1cm}\E |\mathcal{M}_{f}^{(r)}(t)-\mathcal{M}_{f}^{(r)}(0)|^{2m}=\E\left|\sum_{k\in\Za}\mathcal{U}_k^{(r)}f_{\lambda_k^{(r)}}(\ee^{\ii \lambda_{k}^{(r)}t}-1)\right|^{2m}\notag\\
&\leq C(m)\max\left\{ \left(\sum_{k\in\Za}|f_{\lambda_k^{(r)}}|^2|\ee^{\ii \lambda_k^{(r)}t}-1|^2\right)^m, \sum_{k\in\Za}|f_{\lambda_k^{(r)}}|^{2m}|\ee^{\ii \lambda_k^{(r)}t}-1|^{2m} \right\}\notag
\\
&= C(m) \left(\sum_{k\in\Za}|f_{\lambda_k^{(r)}}|^2|\ee^{\ii \lambda_k^{(r)}t}-1|^2\right)^m\label{eq:chentsov_proof_gen0},
\end{align}
where we have used that $(\mathcal{U}^{(r)}_{k})_{k\in\Za}$ are centered with 
unit variance and unit $2m$-th absolute moment. We note in passing that the previous inequality holds as an equality in the case $m=1$, namely, $$\E |\mathcal{M}_{f}^{(r)}(t)-\mathcal{M}_{f}^{(r)}(0)|^2= \sum_{k\in\Za}|f_{\lambda_k^{(r)}}|^2|\ee^{\ii \lambda_k^{(r)}t}-1|^2.$$ Since all the three summands in~\eqref{eq:continuity_decomposition} have the same structure,~\eqref{eq:chentsov_proof_gen0} and the inequalities
$$
\left(\frac{a+b+c}{3}\right)^{2m}\leq\frac{a^{2m}+b^{2m}+c^{2m}}{3},\quad a,b,c\geq 0
$$
entail
\begin{equation}\label{eq:chentsov_proof_gen1}
\E |\mathcal{M}_{f}(t)-\mathcal{M}_{f}(0)|^{2m}\leq 3^{2m-1}C(m)\left(\sum_{\lambda\in\mathcal{S}(f)}|f_{\lambda}|^2|\ee^{\ii \lambda t}-1|^2\right)^m.
\end{equation}
Decompose the sum on the right-hand side as
\begin{equation}\label{eq:chentsov_proof_gen}
\sum_{\lambda\in\mathcal{S}(f)}|\ee^{\ii \lambda t}-1|^2|f_{\lambda}|^2\\
\leq 4t^2\sum_{\lambda\in\mathcal{S}(f)}\lambda^2|f_{\lambda}|^2 \1_{\{|\lambda|\leq t^{-1}\}}+4\sum_{\lambda\in\mathcal{S}(f)}|f_{\lambda}|^2\1_{\{|\lambda|>t^{-1}\}}.
\end{equation}
The first sum is $O(t^{-\alpha})$ in view of~\eqref{eq:lambda_summatory_cond}, whereas the second sum is $O(t^{\beta})$ in view of~\eqref{eq:a_summatory_cond}. Thus, the right-hand side of~\eqref{eq:chentsov_proof_gen} is $O(t^{\min(2-\alpha,\beta)})$, as $t\to 0+$. Plugging this into~\eqref{eq:chentsov_proof_gen1} implies that, for every $m\in\mathbb{N}$ and $T>0$, there exists $K=K(m,T)>0$ such that
$$
\E |\mathcal{M}_{f}(t)-\mathcal{M}_{f}(0)|^{2m}\leq K |t|^{m\min(2-\alpha,\beta)},\quad |t|\leq T.
$$
Picking $m\in\mathbb{N}$ such that $m\min(2-\alpha,\beta)>1$ completes the proof.
\end{proof}


\subsection{The tangent process}
In this section, we prove a functional limit theorem for a suitably normalized ``chordal process'' $(\mathcal{M}_f(\varepsilon t)-\mathcal{M}_f(0))_{t\in\mathbb{R}}$, as $\varepsilon\to 0+$. The limit process is called the tangent process of $\mathcal{M}_f$, a notion introduced by Falconer~\cite{Falconer_tangent,Falconer_local_structure}.

Observe that associated with any $f\in B_2\setminus \{0\}$ is a probability measure $\mathfrak{S}_f$ supported by the spectrum $\mathcal{S}(f)$ and defined by the equality
$$
\mathfrak{S}_f(\{\lambda\})=\frac{|f_{\lambda}|^2}{\sum_{\lambda\in\mathcal{S}(f)}|f_{\lambda}|^2}\overset{\eqref{eq:square_suumable}}{=}\frac{|f_{\lambda}|^2}{\|f\|^2_{M_2}},\quad \lambda\in\mathcal{S}(f).
$$
If $\xi_f$ is a random variable on $(\Omega,\mathcal{F},\mathbb{P})$ which is distributed according to $\mathfrak{S}_f$, our conditions~\eqref{eq:lambda_summatory_cond} and~\eqref{eq:a_summatory_cond} 
take the form
\begin{equation}\label{eq:a_summatory_cond_rv}
\mathbb{E}[\xi_f^2 \1_{\{|\xi_f|\leq x\}}]=O(x^{\alpha}),\quad x\to +\infty,
\end{equation}
and
\begin{equation}\label{eq:lambda_summatory_cond_rv}
\mathbb{P}[|\xi_f|\geq x]=O(x^{-\beta}),\quad x\to+\infty,
\end{equation}
respectively. In order to identify the tangent process of $\mathcal{M}_f$, we need more 
information about the behavior of Fourier exponents and Fourier coefficients of $f$ (encoded by the probability measure $\mathfrak{S}_f$).
More precisely, we shall assume that the truncated second moment of $\mathfrak{S}_f$ is a function which varies regularly at $+\infty$ with index $\theta\in (0,2)$, that is,
\begin{equation}\label{eq:xi_f_reg_var_moment}
\mathbb{E}[\xi_f^2 \1_{\{|\xi_f|\leq x\}}]~\sim~x^{\theta}\ell(x),\quad x\to+\infty.
\end{equation}
According to Eq.~(5.16) on p.~577 in~\cite{Feller_Vol2}, relation~\eqref{eq:xi_f_reg_var_moment} implies
\begin{equation}\label{eq:xi_f_reg_var_tail}
\mathbb{P}\{|\xi_f|>x\}~\sim~\frac{\theta}{2-\theta}x^{\theta-2}\ell(x),\quad x\to+\infty.
\end{equation}
Fix any $\varepsilon>0$. Since $\lim_{x\to +\infty} x^{-\varepsilon}\ell(x)=0$, we conclude that~\eqref{eq:xi_f_reg_var_moment} and~\eqref{eq:xi_f_reg_var_moment} secure~\eqref{eq:a_summatory_cond_rv} and~\eqref{eq:lambda_summatory_cond_rv} with $\alpha=\theta+\varepsilon$ and $\beta=2-\theta-\varepsilon$, respectively. By Proposition~\ref{thm:continuity}, this yields $\gamma$-H\"{o}lder continuity of $\mathcal{M}_f$ for each $\gamma<1-\theta/2$.

Theorem~\ref{thm:tangent} below is the promised functional limit theorem for the tangent process $(\mathcal{M}_f(\varepsilon t)-\mathcal{M}_f(0))_{t\in\mathbb{R}}$ under the additional assumption that the spectrum of $f$ is symmetric with respect to the origin, $\mathcal{R}=\varnothing$,  and the Fourier coefficients satisfy
\begin{equation}\label{eq:symmetric_coeffs}
f_{-\lambda}=\overline{f_{\lambda}},\quad \lambda\in\mathcal{S}(f).
\end{equation}
Assumption~\eqref{eq:symmetric_coeffs} implies that the distribution of $\xi_f$ is symmetric in the sense $\xi_f\overset{{\rm d}}{=}-\xi_f$. Under~\eqref{eq:symmetric_coeffs}, the process $\mathcal{M}_f=\mathcal{M}_f^{(c)}$ is given by
$$
\mathcal{M}_f(t)=\sum_{k\geq 1}\left(\mathcal{U}_k^{(c)} f_{\lambda_k^{(c)}}\ee^{\ii \lambda_k^{(c)} t}+\overline{\mathcal{U}_k^{(c)}}\overline{f_{\lambda_k^{(c)}}}\ee^{-\ii \lambda_k^{(c)} t}\right)=2\Re\left(\sum_{k\geq 1}\mathcal{U}_k^{(c)} f_{\lambda_k^{(c)}}\ee^{\ii \lambda_k^{(c)} t}\right).
$$
In particular, it is real-valued.

\begin{theorem}\label{thm:tangent}
Suppose that $f\in B_2$ satisfies~\eqref{eq:lin_independence_gen} with $\mathcal{R}=\varnothing$ and~\eqref{eq:symmetric_coeffs}. Assume that~\eqref{eq:xi_f_reg_var_moment} holds with some $\theta\in(0,2)$ and $\ell$ slowly varying at infinity.
Then,  as $\varepsilon\to 0+$,
\begin{equation}\label{eq:tangent_claim}
(T_{\varepsilon}(t))_{t\in\mathbb{R}}:=\left(\frac{\mathcal{M}_f(\varepsilon t)-\mathcal{M}_f(0)}{\sqrt{\varepsilon^{2-\theta}\ell(\varepsilon^{-1})}}\right)_{t\in\mathbb{R}}~\Longrightarrow~(B_{1-\theta/2}(t))_{t\in\mathbb{R}},
\end{equation}
on the space $C(\mathbb{R},\mathbb{R})$ with the locally uniform topology. Here $B_{1-\theta/2}$ is a centered fractional Brownian motion with the covariance
$$
\E B_{1-\theta/2}(t)B_{1-\theta/2}(s)=C_{f,\theta}(|t|^{2-\theta}+|s|^{2-\theta}-|t-s|^{2-\theta}),\quad t,s\in\mathbb{R},
$$
where $C_{\theta,f}$ is a positive constant given by equality~\eqref{eq:constant_def} below.
\end{theorem}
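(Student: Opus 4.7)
My plan is to prove \eqref{eq:tangent_claim} via the classical three-step route: (a) compute the asymptotic behavior of the covariance of $T_\varepsilon$ and show that it matches the fBM covariance; (b) upgrade to weak convergence of finite-dimensional distributions by a Lindeberg--Feller central limit theorem applied to the independent-phase series representation of $\mathcal{M}_f$; (c) prove tightness in $C(\mathbb{R},\mathbb{R})$ with the locally uniform topology via a Kolmogorov--Chentsov moment estimate based on Rosenthal's inequality, as in the proof of Proposition~\ref{thm:continuity}. A Gaussian limit with continuous paths and the stated covariance is necessarily $B_{1-\theta/2}$, which closes the argument.

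The analytic heart is a standard Tauberian calculation. Writing $\psi(u):=K_{\mathcal{M}_f}(0)-K_{\mathcal{M}_f}(u)=\|f\|_{M_2}^2\,\mathbb{E}[1-\cos(\xi_f u)]$, integration by parts rewrites this as $\int_0^\infty \sin v\cdot\mathbb{P}\{|\xi_f|>v/u\}\,\mathrm{d}v$. Plugging in the tail asymptotic \eqref{eq:xi_f_reg_var_tail}, using slow variation of $\ell$, and applying dominated convergence (with Potter's bounds providing the dominating function) yields
\begin{equation}\label{eq:constant_def}
\psi(u)\,\sim\,C_{f,\theta}\,u^{2-\theta}\ell(u^{-1}),\qquad C_{f,\theta}:=\|f\|_{M_2}^2\,\frac{\theta}{2-\theta}\int_0^\infty v^{\theta-2}\sin v\,\mathrm{d}v,\qquad u\to 0+,
\end{equation}
the last integral being finite and positive for $\theta\in(0,2)$. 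Since $\mathbb{E}[(\mathcal{M}_f(\varepsilon t)-\mathcal{M}_f(0))(\mathcal{M}_f(\varepsilon s)-\mathcal{M}_f(0))]=\psi(\varepsilon t)+\psi(\varepsilon s)-\psi(\varepsilon(t-s))$ by stationarity, \eqref{eq:constant_def} together with the slow variation of $\ell$ gives $\mathbb{E}[T_\varepsilon(t)T_\varepsilon(s)]\to C_{f,\theta}(|t|^{2-\theta}+|s|^{2-\theta}-|t-s|^{2-\theta})$, i.e., exactly the target fBM covariance.

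For finite-dimensional convergence, the Cram\'er--Wold device reduces matters to the scalar CLT for $\sum_j\alpha_j T_\varepsilon(t_j)=\sum_k X_k^\varepsilon$, a sum of independent real random variables (thanks to the independence of the phases $\mathcal{U}_k^{(c)}$). Step (a) supplies the limiting variance. For the Lindeberg condition, one exploits the observation that regular variation of $U(x):=\|f\|_{M_2}^2\mathbb{E}[\xi_f^2\mathbf{1}_{|\xi_f|\leq x}]$ with positive index forces the relative size of its jumps to vanish, i.e., $|f_\lambda|^2\lambda^2/U(\lambda)\to 0$ as $|\lambda|\to\infty$, which together with the uniform bound $|X_k^\varepsilon|\leq C_{\alpha,t}|f_{\lambda_k^{(c)}}|(|\lambda_k^{(c)}|\varepsilon\wedge 1)/\sqrt{\varepsilon^{2-\theta}\ell(\varepsilon^{-1})}$ and Potter's bounds on $\ell$ yields $\max_k|X_k^\varepsilon|\to 0$; the Lindeberg--Feller theorem then delivers a centered Gaussian limit with the covariance from Step (a). For tightness, the Rosenthal estimate used in the proof of Proposition~\ref{thm:continuity} gives
\[
\mathbb{E}|T_\varepsilon(t)-T_\varepsilon(s)|^{2m}\leq\frac{K_m\,\psi(\varepsilon|t-s|)^m}{(\varepsilon^{2-\theta}\ell(\varepsilon^{-1}))^m},
\]
and upgrading \eqref{eq:constant_def} to the uniform estimate $\psi(u)\leq C'u^{2-\theta}\ell(u^{-1})$ on $(0,u_0]$, again with Potter's bounds to control $\ell((\varepsilon|t-s|)^{-1})/\ell(\varepsilon^{-1})$, converts the right-hand side into $K_{m,T,\delta}|t-s|^{m(2-\theta-\delta)}$ uniformly for $|t|,|s|\leq T$ and small $\varepsilon$; choosing $\delta>0$ small and $m$ large enough that $m(2-\theta-\delta)>1$ closes the Kolmogorov--Chentsov criterion.

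The main obstacle will be the Lindeberg verification in step (b): although the summands $X_k^\varepsilon$ have a transparent deterministic bound, individual atoms of the spectrum can a priori contribute at the critical scale, and one needs the non-trivial consequence of strong regular variation that excludes lacunary spectra, namely the vanishing of the relative jumps of $U$. Once this is settled, the covariance computation and the tightness estimate are relatively routine applications of Karamata/Potter theory, and the identification of the limit as $B_{1-\theta/2}$ follows automatically from uniqueness in the Gaussian class.
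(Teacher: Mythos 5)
Your proposal is correct and follows essentially the same route as the paper's proof: the covariance asymptotics via the Karamata--Pitman Tauberian estimate for $\E[1-\cos(\xi_f u)]$ (your integral expression for $C_{f,\theta}$ agrees with the paper's Gamma-function form via the reflection formula), a Lindeberg--Feller argument for the finite-dimensional distributions whose crux is exactly the paper's ``negligibility of atoms'' lemma (that regular variation of the truncated second moment and of the tail forces the relative jumps to vanish), and tightness via the Rosenthal/Kolmogorov--Chentsov estimate from Proposition~\ref{thm:continuity} with uniform Karamata-type bounds. The only cosmetic differences are that you verify Lindeberg through a deterministic sup-bound on the summands where the paper checks a fourth-moment Lyapunov condition, and you invoke Potter's bounds where the paper uses the uniform convergence theorem.
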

\begin{proof}[Proof of Theorem~\ref{thm:tangent}]
Observe that
$$
\mathcal{M}_f(\varepsilon t)-\mathcal{M}_f(0)=\sum_{k\geq 1}\left(\mathcal{U}_k^{(c)} f_{\lambda_k^{(c)}}(\ee^{\ii \lambda_k^{(c)} \varepsilon t}-1)+\overline{\mathcal{U}_k^{(c)}} \overline{f_{\lambda_k^{(c)}}}(\ee^{-\ii \lambda_k^{(c)} \varepsilon t}-1)\right)
$$ is the sum of independent random variables. Thus, a standard way to check the convergence is to apply the Lindeberg-Feller sufficient conditions. To this end, it suffices to check (i) convergence of covariances, (ii) the Lindeberg condition (we shall check the Lyapunov condition instead) and (iii) tightness.

\noindent
{\sc Convergence of covariances.} 
Using $\mathbb{E}\mathcal{U}_k^{(c)}=\mathbb{E}(\mathcal{U}_k^{(c)})^2=0$ and independence, we conclude that
\begin{align*}
\mathbb{E}[T_{\varepsilon}(t)T_{\varepsilon}(s)]&=\frac{1}{\varepsilon^{2-\theta}\ell(\varepsilon^{-1})}\sum_{k\geq 1}|f_{\lambda_k^{(c)}}|^2((\ee^{\ii \lambda_k^{(c)} \varepsilon t}-1)(\ee^{-\ii \lambda_k^{(c)} \varepsilon s}-1)+(\ee^{-\ii \lambda_k^{(c)} \varepsilon t}-1)(\ee^{\ii \lambda_k^{(c)} \varepsilon s}-1))\\
&=\frac{2}{\varepsilon^{2-\theta}\ell(\varepsilon^{-1})}\sum_{k\geq 1}|f_{\lambda_k^{(c)}}|^2 (\cos(\lambda_k^{(c)}\varepsilon (t-s))-\cos(\lambda_k^{(c)} \varepsilon t)-\cos(\lambda_k^{(c)} \varepsilon s)+1)\\
&=\frac{1}{\varepsilon^{2-\theta}\ell(\varepsilon^{-1})}\sum_{\lambda\in\mathcal{S}(f)}|f_{\lambda}|^2 (\cos(\lambda \varepsilon (t-s))-\cos(\lambda \varepsilon t)-\cos(\lambda \varepsilon s)+1)\\
&=\frac{\|f\|_{M_2}}{\varepsilon^{2-\theta}\ell(\varepsilon^{-1})}(v(\varepsilon t)+v(\varepsilon s)-v(\varepsilon (t-s)),
\end{align*}
where $v(t):=\mathbb{E}(1-\cos(\xi_f t))$, for $t\in\mathbb{R}$. In view of~\eqref{eq:xi_f_reg_var_tail}, Theorem 8.1.10 in~\cite{BGT} ensures that
\begin{equation}\label{eq:pitman_asymp}
\mathbb{E}[1-\cos(\xi_f \varepsilon)]~\sim~\frac{\pi\theta \varepsilon^{2-\theta}\ell(\varepsilon^{-1})}{2\Gamma(3-\theta)\sin(\pi\theta/2)},\quad \varepsilon\to 0+.
\end{equation}
Thus, for every fixed $t,s\in\mathbb{R}$,
\begin{equation}\label{eq:tangent_process_covariance}
\mathbb{E}[T_{\varepsilon}(t)T_{\varepsilon}(s)]~\to~ C_{f,\theta}(|t|^{2-\theta}+|s|^{2-\theta}-|t-s|^{2-\theta}),\quad \varepsilon\to 0+,
\end{equation}
where
\begin{equation}\label{eq:constant_def}
C_{f,\theta}:=\frac{\pi\theta\|f\|_{M_2}}{2\Gamma(3-\theta)\sin(\pi\theta/2)}.
\end{equation}

\noindent
{\sc Negligibility of atoms.} Before we can proceed with the verification of the Lyapunov condition, we show that each individual atom in $\mathcal{S}(f)\cap[-x,x]^c$ (respectively, in $\mathcal{S}(f)\cap[-x,x]$) of the distribution of $\xi_f$ makes a negligible contribution to the tail (respectively, truncated second moment), as $x\to\infty$.
\begin{lemma}
Assuming~\eqref{eq:xi_f_reg_var_moment} (hence, also~\eqref{eq:xi_f_reg_var_tail}) the following holds true
\begin{equation}\label{eq:xi_f_reg_var_tail_balance}
\begin{aligned}
&\frac{\max\{\mathbb{P}\{\xi_f=\lambda\}\,:\,|\lambda| > x,\lambda\in\mathcal{S}(f)\}}{\mathbb{P}\{|\xi_f|>x\}}~\to~0,\quad x\to+\infty,\\
&\frac{\max\{\mathbb{P}\{\xi_f=\lambda\}\lambda^2\,:\,|\lambda| \leq x,\lambda\in\mathcal{S}(f)\}}{\mathbb{E}[\xi_f^2 \1_{\{|\xi_f|\leq x\}}]}~\to~0,\quad x\to+\infty.
\end{aligned}
\end{equation}
\end{lemma}
\begin{proof}
To prove the first relation, let $R(x) := \mathbb{P}\{|\xi_f|>x\}$ for $x\geq 0$. By~\eqref{eq:xi_f_reg_var_tail},  $R(x)$ is regularly varying at $\infty$ with negative index $\theta-2$. Fix $\varepsilon>0$. By the uniform convergence theorem, see Theorem~1.5.2 in~\cite{BGT}, $0\leq 1 - \frac{R(y+1)}{R(y)}\leq \varepsilon$ for all sufficiently large $y$. It follows that, for sufficiently large $x$,
\begin{align*}
\frac{\max\{\mathbb{P}\{\xi_f=\lambda\}\,:\,|\lambda| > x,\lambda\in\mathcal{S}(f)\}}{\mathbb{P}\{|\xi_f|>x\}}
\leq \frac{\sup_{y\geq x} (R(y)- R(y+1))}{R(x)}\leq \frac{\varepsilon \sup_{y\geq x} 
R(y)}{R(x)}=\varepsilon.
\end{align*}
The proof of the second relation is similar. Define $T(x) := \mathbb{E}[\xi_f^2 \1_{\{|\xi_f|\leq x\}}]$ for $x\geq 0$. By~\eqref{eq:xi_f_reg_var_moment}, $T(x)$ is regularly varying at $\infty$ with positive index $\theta$. Fix $\varepsilon>0$. Again, by Theorem~1.5.2 in~\cite{BGT}, 
$0\leq \frac{T(y+1)}{T(y)} -1 \leq \varepsilon$ whenever $y\geq c_0$ for some sufficiently large $c_0$. Therefore, for sufficiently large $x$,
\begin{multline*}
\frac{\max\{\mathbb{P}\{\xi_f=\lambda\}\lambda^2\,:\,c_0 \leq |\lambda| \leq x,\lambda\in\mathcal{S}(f)\}}{\mathbb{E}[\xi_f^2 \1_{\{|\xi_f|\leq x\}}]}\\
\leq \frac{\sup_{y\in [c_0,x]} (T(y+1) - T(y))}{T(x)} \leq
\frac{\varepsilon\sup_{y\in [c_0,x]} T(y)}{T(x)} \leq \varepsilon,
\end{multline*}
which implies the second relation because 
the maximal atom of $|\xi_f|$ in $[0,c_0]$ is bounded, whereas $T(x)\to\infty$.
\end{proof}

\noindent
{\sc The Lyapunov condition.} Fix $t\in\mathbb{R}$ and put
$\mathcal{Y}_{t,k}(\varepsilon):=2\Re(\mathcal{U}_k^{(c)}f_{\lambda_k^{(c)}}(\ee^{\ii \lambda_k^{(c)} \varepsilon t}-1))$. Then the Lindeberg condition reads as follows: for every fixed $\delta>0$ and $t\in\mathbb{R}$,
\begin{equation}\label{eq:lindeberg}
\frac{1}{\varepsilon^{2-\theta}\ell(\varepsilon^{-1})}\sum_{k\geq 1}\mathbb{E}[|\mathcal{Y}_{t,k}(\varepsilon)|^2 \1_{\{|\mathcal{Y}_{t,k}(\varepsilon)|^2>\delta \varepsilon^{2-\theta}\ell(\varepsilon^{-1})\}}]\to 0,\quad \varepsilon\to0+.
\end{equation}
We shall prove~\eqref{eq:lindeberg} by checking a stronger Lyapunov condition
\begin{equation}\label{eq:lyapunov}
\frac{1}{(\varepsilon^{2-\theta}\ell(\varepsilon^{-1}))^2}\sum_{k\geq 1}\mathbb{E}[|\mathcal{Y}_{t,k}(\varepsilon)|^4]\to 0,\quad \varepsilon\to0+.
\end{equation}
Using
$$
|\mathcal{Y}_{t,k}(\varepsilon)|^4\leq 16|f_{\lambda_k^{(c)}}|^4|\ee^{\ii \lambda_k^{(c)} \varepsilon t}-1|^4
$$
we conclude that
\begin{align*}
\sum_{k\geq 1}\mathbb{E}[|\mathcal{Y}_{t,k}(\varepsilon)|^4]&\leq 16\max_{k\geq 1}\left(|f_{\lambda_k^{(c)}}|^2|\ee^{\ii \lambda_k^{(c)} \varepsilon t}-1|^2\right)\sum_{k\geq 1}|f_{\lambda_k^{(c)}}|^2|\ee^{\ii \lambda_k^{(c)} \varepsilon t}-1|^2\\
&=8\|f\|^2_{M_2}\max_{k\geq 1}\left(|f_{\lambda_k^{(c)}}|^2|\ee^{\ii \lambda_k^{(c)} \varepsilon t}-1|^2\right) \mathbb{E}|\ee^{\ii \xi_f \varepsilon t}-1|^2\\
&\leq 16\|f\|^2_{M_2}\max_{\lambda\in\mathcal{S}(f)}\left(|f_{\lambda}|^2|\ee^{\ii \lambda \varepsilon t}-1|^2\right) \mathbb{E}(1-\cos(\xi_f \varepsilon t)).
\end{align*}
According to~\eqref{eq:pitman_asymp}, relation~\eqref{eq:lyapunov} follows once we can check that
\begin{equation*}
\frac{\max_{\lambda\in\mathcal{S}(f)}\left(|f_{\lambda}|^2|\ee^{\ii \lambda \varepsilon t}-1|^2\right)}{\varepsilon^{2-\theta}\ell(\varepsilon^{-1})}~\to~0,\quad \varepsilon\to0+.
\end{equation*}
Observe that
\begin{align*}
\max_{\lambda\in\mathcal{S}(f)}\left(|f_{\lambda}|^2|\ee^{\ii \lambda \varepsilon t}-1|^2\right)\leq\max_{\lambda\in\mathcal{S}(f)\,:\,\lambda>\varepsilon^{-1}}\left(|f_{\lambda}|^2|\ee^{\ii \lambda \varepsilon t}-1|^2\right)+\max_{\lambda\in\mathcal{S}(f)\,:\,\lambda\leq \varepsilon^{-1}}\left(|f_{\lambda}|^2|\ee^{\ii \lambda \varepsilon t}-1|^2\right)\\
\leq 4\max_{\lambda\in\mathcal{S}(f)\,:\,\lambda>\varepsilon^{-1}}|f_{\lambda}|^2+4t^2\varepsilon^2\max_{\lambda\in\mathcal{S}(f)\,:\,\lambda\leq \varepsilon^{-1}}\left(|f_{\lambda}|^2\lambda^2\right).
\end{align*}
Upon dividing by $\varepsilon^{2-\theta}\ell(\varepsilon^{-1})$ the first summand tends to zero by the first part of~\eqref{eq:xi_f_reg_var_tail_balance}. The second summand does so by the second part of~\eqref{eq:xi_f_reg_var_tail_balance}. Thus,~\eqref{eq:lindeberg} holds true.

\vspace{2mm}
\noindent
{\sc Tightness.} For the proof of tightness observe that,~\eqref{eq:chentsov_proof_gen1} and~\eqref{eq:chentsov_proof_gen} imply, for $t\neq s$ and $m\in\mathbb{N}$,
\begin{multline}\label{eq:tightness_tangent1}
\E [T_{\varepsilon}(t)-T_{\varepsilon}(s)]^{2m}=\frac{\E [\mathcal{M}_{f}(\varepsilon t)-\mathcal{M}_{f}(\varepsilon s)]^{2m}}{(\varepsilon^{2-\theta}\ell(\varepsilon^{-1}))^m}=\frac{\E [\mathcal{M}_{f}(\varepsilon (t-s))-\mathcal{M}_{f}(0)]^{2m}}{(\varepsilon^{2-\theta}\ell(\varepsilon^{-1}))^m}\\
\leq \frac{4^m 3^{2m-1}C(m)}{(\varepsilon^{2-\theta}\ell(\varepsilon^{-1}))^m}\left((\varepsilon |t-s|)^2\sum_{\lambda\in\mathcal{S}(f)}\lambda^2|f_{\lambda}|^2 \1_{\{|\lambda|\leq (\varepsilon|t-s|)^{-1}\}}+\sum_{\lambda\in\mathcal{S}(f)}|f_{\lambda}|^2\1_{\{|\lambda|>(\varepsilon |t-s|)^{-1}\}}\right)^m.
\end{multline}
Since $2-\theta>0$, the limit relations
$$
\frac{1}{\varepsilon^{2-\theta}\ell(\varepsilon^{-1})}(\varepsilon z)^2\sum_{\lambda\in\mathcal{S}(f)}\lambda^2|f_{\lambda}|^2 \1_{\{|\lambda|\leq (\varepsilon z)^{-1}\}}~\to~ \|f\|_{M_2}^2 z^{2-\theta},\quad \varepsilon\to 0+
$$
and
$$
\frac{1}{\varepsilon^{2-\theta}\ell(\varepsilon^{-1})}\sum_{\lambda\in\mathcal{S}(f)}|f_{\lambda}|^2 \1_{\{|\lambda|> (\varepsilon z)^{-1}\}}~\to~ \|f\|_{M_2}^2\frac{\theta}{2-\theta}z^{2-\theta},\quad \varepsilon\to 0+
$$
hold uniformly in $z\in (0,T]$ for every fixed $T>0$, 
see Theorem 1.5.2 in~\cite{BGT}. Thus,~\eqref{eq:tightness_tangent1} implies that, for every $\varepsilon_0>0$, $T>0$ and $m\in\mathbb{N}$ there is a constant $C=C(\varepsilon_0,m,T)>0$ such that
$$
\E [T_{\varepsilon}(t)-T_{\varepsilon}(s)]^{2m}\leq C|t-s|^{(2-\theta)m},\quad \varepsilon\in (0,\varepsilon_0),\quad 0<|t-s|\leq T.
$$
Picking $m\in\mathbb{N}$ such that $(2-\theta)m>1$ we conclude that the family $(T_{\varepsilon}(t))_{t\in\mathbb{R}}$, $\varepsilon>0$ is tight in $C(\mathbb{R},\mathbb{C})$.
\end{proof}

\begin{example}\label{exmaple:tangent}
A general example of $f\in B_2$ satisfying all the assumptions of Theorem~\ref{thm:tangent} can be constructed as follows. Let $\ell_1,\ell_2$ be two functions which are slowly varying at infinity and $A>0$ 
a positive constant. We take a countable set $\mathcal{C}$ of positive real numbers without finite accumulation points and such that $\mathcal{C}$ is linearly independent over $\mathbb{Q}$. We enumerate the points of $\mathcal{C}$ in the increasing order
$$
0<\lambda_1<\lambda_2<\cdots<\lambda_n<\cdots.
$$
Suppose that the sequence $(\lambda_k)_{k\in\mathbb{N}}$ is regularly varying at infinity with some index $\mathfrak{a}>0$, that is, $\lambda_n~\sim~n^{\mathfrak{a}}\ell_1(n)$, as $n\to\infty$. It is known that the function
$$
\lambda^{\leftarrow}(x):=\inf\{k\geq 1\,:\,\lambda_k > x\},\quad x > 0,
$$
is regularly varying at $+\infty$ with index $\mathfrak{a}^{-1}$, see Theorem 1.5.12 in~\cite{BGT}. Denote by $\mathcal{N}$ a random variable on $\mathbb{N}$ with distribution
\begin{equation}\label{eq:exmaple_N_reg_var}
\mathbb{P}\{\mathcal{N}=n\}~\sim~n^{-(\mathfrak{b}+1)}\ell_2(n),\quad n\to\infty
\end{equation}
for some $0<\mathfrak{b}<2\mathfrak{a}$. Let $f\in B_2$ be any function with the Fourier spectrum $\mathcal{S}(f)=\mathcal{C}\cup(-\mathcal{C})$ and the Fourier exponents $(f_{\lambda})_{\lambda\in\mathcal{S}(f)}$ satisfying\footnote{Note that the arguments of the complex numbers $f_{\lambda_k}$ are allowed to take any values in $\mathbb{T}$.}
$$
|f_{\lambda_k}|^2=A\mathbb{P}\{\mathcal{N}=k\}\quad\text{and}\quad f_{-\lambda_k}=\overline{f_{\lambda_k}},\quad k\geq 1
$$
for some fixed $A>0$. Observe that
$$
\mathbb{P}\{|\xi_f| > x\}=2\sum_{k\geq 1}\1_{\{\lambda_k > x\}}\mathbb{P}\{\mathcal{N}=k\}=2\mathbb{P}\{\mathcal{N}> \lambda^{\leftarrow}(x)\}.
$$
Since~\eqref{eq:exmaple_N_reg_var} implies that $x\mapsto \mathbb{P}\{\mathcal{N}>x\}$ is regularly varying at infinity with index $-\mathfrak{b}$, we conclude that $x\mapsto \mathbb{P}\{|\xi_f| > x\}$ satisfies~\eqref{eq:xi_f_reg_var_moment} and~\eqref{eq:xi_f_reg_var_tail} with $\theta=2-\frac{\mathfrak{b}}{\mathfrak{a}}$.
\end{example}

\subsection{Locally uniform convergence: a conjecture}

Corollary~\ref{cor:fidis_convergence} establishes that the finite-\-dim\-en\-sion\-al distributions of $(f(V_{-L,L}+t))_{t\in\mathbb{R}}$ converge to those of $(\mathcal{M}_f(t))_{t\in\mathbb{R}}$, while Theorem~\ref{thm:conv_in_B2} addresses the convergence in the space  $B_2(\mathcal{S}(f))$. This raises the question of whether a convergence also holds in a more classical functional space, such as the space of continuous functions $C(\mathbb{R},\mathbb{C})$ or the Skorokhod space $D(\mathbb{R},\mathbb{C})$. Naturally, for this question to be meaningful, the converging and limiting processes have to belong to the corresponding 
spaces with probability one.

Recall that either~\eqref{eq:hunt_continuous_suff} or \eqref{eq:lambda_summatory_cond} and~\eqref{eq:a_summatory_cond} ensure that $\mathcal{M}_{f}$ has a.s.~continuous sample paths. It also clear that $f(V_{-L,L}+\cdot)$ is a.s.~continuous if, and only if, so is $f$. We conjecture that, under these assumptions, the convergence of finite-dimensional distributions in Corollary~\ref{cor:fidis_convergence} can be lifted to the convergence in $C(\mathbb{R},\mathbb{C})$ endowed with the topology of locally uniform convergence.

\begin{hypothesis}\label{hyp:conv_in_C_R}
Assume that $f\in B_2\cap C(\R,\C)$ satisfies~\eqref{eq:lin_independence_gen} and either~\eqref{eq:hunt_continuous_suff}, or \eqref{eq:lambda_summatory_cond} and \eqref{eq:a_summatory_cond}, hold true. Then,
$$
(f(V_{-L,L}+t))_{t\in\mathbb{R}}~\Longrightarrow~(\mathcal{M}_{f}(t))_{t\in\mathbb{R}},\quad L\to+\infty,
$$
on $C(\mathbb{R},\C)$ endowed with the topology of locally uniform convergence.
\end{hypothesis}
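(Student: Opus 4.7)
The plan is to combine the finite-dimensional convergence from Corollary~\ref{cor:fidis_convergence} with tightness in $C(\mathbb{R},\mathbb{C})$ under the locally uniform topology. Since this topology is induced by the countable family of sup-seminorms on compact intervals $[-T,T]$, it suffices to establish tightness on $C([-T,T],\mathbb{C})$ for each $T>0$; the hypothesis $f\in C(\mathbb{R},\mathbb{C})$ guarantees that the prelimit paths $t\mapsto f(V_{-L,L}+t)$ lie in this space. For tightness I would invoke the Kolmogorov--Chentsov moment criterion, reducing the problem to finding $m\in\mathbb{N}$ and $\eta>0$ such that
\begin{equation*}
\sup_{L\geq 1}\mathbb{E}\bigl|f(V_{-L,L}+t)-f(V_{-L,L}+s)\bigr|^{2m}=\sup_{L\geq 1}\frac{1}{2L}\int_{-L}^{L}\bigl|f(x+t)-f(x+s)\bigr|^{2m}\,\mathrm{d}x\leq C\,|t-s|^{1+\eta}
\end{equation*}
for all $s,t\in[-T,T]$.

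The target moment bound is already available in the limit: under~\eqref{eq:lambda_summatory_cond}--\eqref{eq:a_summatory_cond}, the Rosenthal-based argument of Proposition~\ref{thm:continuity} yields $\mathbb{E}|\mathcal{M}_f(t)-\mathcal{M}_f(s)|^{2m}\leq C|t-s|^{1+\eta}$, and a slightly weaker estimate can be extracted from~\eqref{eq:hunt_continuous_suff} in the spirit of Hunt's theorem. The transfer to the prelimits would proceed by approximation by a Bochner--Fej\'{e}r trigonometric polynomial $P=\sigma_N(f)$. For such a $P$ the integrand $x\mapsto|P(x+t)-P(x+s)|^{2m}$ is Bohr uniformly almost periodic, and continuity of translation in $U$ makes the family $\{|P(\cdot+t)-P(\cdot+s)|^{2m}\}_{s,t\in[-T,T]}$ compact in $U$, so Bohr's theorem gives convergence of the $L$-averages to the mean values uniformly in $(s,t)\in[-T,T]^2$. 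Those mean values coincide with $\mathbb{E}|\mathcal{M}_P(t)-\mathcal{M}_P(s)|^{2m}$ (by Proposition~\ref{prop:convergence_at_fixed_point} together with uniform integrability, since $P$ is bounded) and satisfy the required H\"{o}lder-type bound by a direct Rosenthal computation on the finite Fourier expansion of $P$.

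The main obstacle, and presumably the reason the statement is left as a conjecture, is the passage from a Bochner--Fej\'{e}r approximant $\sigma_N(f)$ back to $f$. The convergence $\|f-\sigma_N(f)\|_{M_2}\to 0$ provides only averaged-in-$x$ control, whereas the Kolmogorov--Chentsov criterion demands a pointwise $L^{2m}$ bound uniform in the time translation parameter. If $f$ is Bohr uniformly almost periodic (for instance, under the stronger condition~\eqref{eq:fourier_coeffs_summable}), the classical uniform convergence $\|f-\sigma_N(f)\|_\infty\to 0$ closes the gap immediately and I expect the conjecture to be provable without serious difficulty. Under the weaker hypotheses of the conjecture, however, neither $f$ nor its Bochner--Fej\'{e}r partial sums need be globally bounded or possess a uniform modulus of continuity, and bridging this gap is likely to require a chaining or truncation argument on a dyadic grid in $[-T,T]$ which simultaneously exploits the continuity of $f$ and a rate of approximation $\sigma_N(f)\to f$ finer than the bare $B_2$ bound --- possibly a Fernique-type maximal estimate on random translates of the tail $f-\sigma_N(f)$. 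This is the step I would expect to be the hard part.
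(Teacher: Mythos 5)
The statement you are addressing is stated in the paper as a \emph{conjecture}, and the authors explicitly decline to prove it: they observe that the finite-dimensional convergence is already settled by Corollary~\ref{cor:fidis_convergence}, that tightness in $C(\mathbb{R},\mathbb{C})$ is ``the only part that requires a proof,'' and they only point to Theorems 1 and 2 of F{\o}lner as a possible tool. Your proposal is therefore not being measured against a proof in the paper, and it is to your credit that you do not claim to have one: your outline correctly reduces the problem to tightness on each $C([-T,T],\mathbb{C})$, correctly notes that the limiting moment bound $\E|\mathcal{M}_f(t)-\mathcal{M}_f(s)|^{2m}\leq C|t-s|^{1+\eta}$ is available from the Rosenthal computation in Proposition~\ref{thm:continuity}, and correctly identifies the genuine obstruction: the Bochner--Fej\'{e}r approximation $\sigma_N(f)\to f$ is controlled only in the $M_2$ seminorm, which gives an averaged second-moment bound at a single time and no uniform-in-$(s,t)$ control of the $2m$-th moments of the error needed for Kolmogorov--Chentsov. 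This diagnosis coincides exactly with why the authors leave the statement open.

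What your proposal does not do is close that gap, so it is a programme rather than a proof. Two remarks on how it compares with the authors' suggestion. First, the tool they point to (F{\o}lner's theorems) provides \emph{uniform} approximation of $f$ by trigonometric polynomials on a subset of $\mathbb{R}$ of upper density arbitrarily close to one; this is precisely the kind of ``finer than bare $B_2$'' control you say is needed, with the exceptional set of small density playing the role of your truncation. A natural continuation of your argument would be to run the Kolmogorov--Chentsov bound on the event that $V_{-L,L}+[-T,T]$ avoids the exceptional set and to estimate the probability of the complement by its density. Second, a small technical point: when you identify the mean value $M(|P(\cdot+t)-P(\cdot+s)|^{2m})$ with $\E|\mathcal{M}_P(t)-\mathcal{M}_P(s)|^{2m}$ you are implicitly using the Kronecker--Weyl identification of the limit law, so this step does rely on assumption~\eqref{eq:lin_independence_gen} and not merely on Proposition~\ref{prop:convergence_at_fixed_point} plus boundedness; worth making explicit. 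You should also record the one-point tightness of $f(V_{-L,L})$ (immediate from Proposition~\ref{prop:convergence_at_fixed_point}), which Kolmogorov--Chentsov requires in addition to the increment bound.
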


Since our primary motivation stems from analytic number theory, and the almost periodic functions that arise in our applications are not continuous, we do not pursue a proof of this conjecture. Instead, we only
note thatTheorems 1 and 2 in~\cite{Folner:1945} may be useful for checking tightness, which is the only part that requires a proof. The aforementioned theorems justify a uniform approximation of $f$ by trigonometric polynomials 
on a subset of $\mathbb{R}$ of upper density arbitrarily close to one.

\section{Applications in analytic number theory}\label{sec:examples}

Almost periodic functions play a significant role in number theory, particularly in the analysis
of asymptotic distributions of arithmetic functions. In what follows we denote by $\mathcal{P}$ the set of prime numbers and by $\zeta$ the Riemann zeta-function defined by the series $\zeta(s):=\sum_{n\geq 1}\frac{1}{n^s}$ for $\Re(s)>1$ and by an analytic continuation for all other $s\in\C$, $s\neq 1$.

\subsection{The von Mangoldt function and its partial sums}\label{subsec:mangoldt}
To the best of our knowledge, the first connection between almost periodic functions and analytic number theory is due to Aurel Wintner~\cite{Wintner:1941} who considered partial sums of the von Mangoldt function. Recall that the von Mangoldt function is a mapping $\Lambda:\N \to \R$ defined by
$$
\Lambda(n)=
\begin{cases}
\log p,&\text{if }n=p^j\text{ for some }p\in\mathcal{P}\text{ and }j\in\N,\\
0,& \text{otherwise}.
\end{cases}
$$
Its partial sum (or summatory) function defined by $\psi(x)=\sum_{n\leq x}\Lambda(n)$, $x\geq 0$, is called the second Chebyshev function. It is known that the prime number theorem is equivalent to the asymptotic relation
\begin{equation}\label{eq:pnt_equiv_chebyshev}
\psi(x)~\sim~x,\quad x\to+\infty.
\end{equation}
A link between functions $\Lambda$ and $\zeta$ is settled by the formula
$$
\frac{\zeta'(s)}{\zeta(s)}=-\sum_{n\geq 1}\frac{\Lambda(n)}{n^s},\quad \Re(s)>1,
$$
which follows by taking the logarithmic derivative on both sides of Euler's product $$
1/\zeta(s)=\prod_{p\in\mathcal{P}}(1-p^{-s}).
$$
Equivalently, one can write
\begin{equation}\label{eq:zeta_recipro-chebyshev}
\frac{\zeta'(s)}{\zeta(s)}=-s\int_1^{\infty}\frac{\psi(x)}{x^{s+1}}\d x,\quad \Re(s)>1.
\end{equation}
Put $\widetilde{\psi}(x):=(\psi(x+0)+\psi(x-0))/2$. By Perron's inversion formula, see Theorem 11.18 in~\cite{Apostol}, we infer from~\eqref{eq:zeta_recipro-chebyshev}
\begin{equation}\label{eq:zeta_recipro-chebyshev_inverse}
\widetilde{\psi}(x)=-\frac{1}{2\pi\ii}\int_{c-\ii\infty}^{c+\ii\infty}\frac{\zeta'(s)x^s}{\zeta(s)s}\d s,\quad x>1,
\end{equation}
where $c>1$ is an arbitrary fixed constant.

Inversion formula~\eqref{eq:zeta_recipro-chebyshev_inverse} suggests that the asymptotic behavior of $\widetilde{\psi}$ is regulated by the poles of the integrands which are precisely the zeros of $\zeta$. Throughout this section we assume the Riemann hypothesis (RH) saying that all zeros of $\zeta$ in the critical strip $0<\Re(s)<1$ lie on the line $\Re(s)=1/2$, thus take the form $\rho=1/2+\ii \gamma$, $\gamma\in\R$. Assuming the RH we shall use the following notation for zeros of $\zeta$ and their imaginary parts. Put
$$
\mathcal{I}_{\zeta}:=\{\gamma>0:\zeta(1/2 + \ii \gamma)=0\},\quad \mathcal{Z}^{\pm}_{\zeta}:=1/2\pm\ii\cdot \mathcal{I}_{\zeta}.
$$
Thus, under the RH, the set of all zeros of $\zeta$ in the critical strip is $\mathcal{Z}_{\zeta}=\mathcal{Z}^{+}_{\zeta}\cup \mathcal{Z}^{-}_{\zeta}$. For $\rho\in \mathcal{Z}_{\zeta}$, we denote by $\gamma=\gamma(\rho)$ the imaginary part of $\rho$. The points in $\mathcal{Z}_{\zeta}$ are enumerated by non-zero integers $\Z^{\ast}:=\Z\setminus\{0\}$ such that $k<m$ implies $\gamma_k<\gamma_m$ and $\gamma_{-1}<0<\gamma_1$. Thus, a sum of the form $\sum_{\rho\in\mathcal{Z}_{\zeta}}a_{\rho}$ is understood as $\sum_{k\in\Za}a_{\rho_k}$.  Moreover, if the series does not converge absolutely we understand the latter as the limit
$\lim_{T\to +\infty}\sum_{k\in\Za:|\gamma_k|\leq T}a_{\rho_k}$.

Having introduced the above notation we can now state the von Mangoldt formula which is essentially a residue expansion in~\eqref{eq:zeta_recipro-chebyshev_inverse}, see Theorem 29 in~\cite{Ingham} for the formal derivation,
\begin{equation}\label{eq:von_mangoldt_formula}
\widetilde{\psi}(x)=x-\sum_{k\in\Za} \frac{x^{\rho_k}}{\rho_k}-\frac{\zeta'(0)}{\zeta(0)}-\frac{1}{2}\log\left(1-\frac{1}{x^2}\right),\quad x>1.
\end{equation}
Replacing here $x$ with $\ee^t$ and rearranging, we obtain
\begin{equation}\label{eq:von_mangoldt_formula_normalized}
\widehat{\psi}(t):=\frac{\widetilde{\psi}(\ee^t)-\ee^t}{\ee^{t/2}}=-\sum_{k\in\Za} \frac{\ee^{\ii\gamma_k t}}{\rho_k}-\frac{\zeta'(0)}{\zeta(0)}\ee^{-t/2}-\frac{\ee^{-t/2}}{2}\log\left(1-\ee^{-2t}\right)=-\psi_{ap}(t)+R_{\psi}(t),\quad t>0,
\end{equation}
where $\psi_{ap}(t)=\sum_{k\in\Za} \frac{\ee^{\ii\gamma_k t}}{\rho_k}$.

As the difference of a continuous and a discontinuous functions, $\psi_{ap}$ is not continuous. It is also not c\'{a}dl\'{a}g for, by definition, its value at a discontinuity point is half the
sum of left and right limits at this point. In particular, $\psi_{ap}$ cannot be a Bohr almost periodic function. On the other hand, we claim that $\psi_{ap}$ is a Besicovitch almost periodic function. Indeed, it is known, see pp.~97-100 in~\cite{Davenport}, that
\begin{equation}\label{eq:counting_zeros}
\sum_{\rho\in\mathcal{Z}_{\zeta}^{+}}\1_{\{\gamma\leq x\}}=\sum_{k\geq 1}\1_{\{\gamma_k\leq x\}}~\sim~\frac{x\log x}{2\pi},\quad x\to+\infty.
\end{equation}
By a standard Tauberian argument this implies that, for $s>1/2$,
\begin{equation}\label{eq:counting_inverse_zeros}
\sum_{\rho\in\mathcal{Z}_{\zeta}^{+}}\frac{\1_{\{\gamma>x\}}}{|\rho|^{2s}}=\sum_{k\geq 1}\frac{\1_{\{\gamma_k>x\}}}{|\rho_k|^{2s}}~\sim~\frac{x^{1-2s}\log x}{2\pi(2s-1)},\quad x\to+\infty,
\end{equation}
and  that the series $\sum_{\rho\in\mathcal{Z}_{\zeta}^{+}}|\rho|^{-2s}$ converges if, and only if, $s>1/2$. Using this fact, with $s=1$, demonstrates that $\psi_{ap}$ is a Besicovitch almost periodic function with the Fourier spectrum $\mathcal{Z}_{\zeta}$ and the Fourier coefficients $\{1/\rho\,:\,\rho\in\mathcal{Z}_{\zeta}\}$ by the Riesz--Fischer theorem, see p.~110 in~\cite{Besicovitch}. Observing that $R_{\psi}(t)\to 0$, as $t\to+\infty$, Wintner inferred using a one-sided version of formula~\eqref{eq:one_dim_convergence}, that $\widehat{\psi}(V_{0,L})$ converges in distribution, as $L\to+\infty$, to a nondegenerate random variable. Note that for $t=V_{0,L}$, formula~\eqref{eq:von_mangoldt_formula_normalized} holds with probability one when $\widetilde{\psi}$ is replaced by $\psi$.

The results of Section~\ref{sec:ap_processes} allow us to analyze {\em the process} $\widehat{\psi}(V_{0,L}+\cdot)$. The remainder term $R_{\psi}$ is an elementary function and its analysis is trivial. For example, it is clear that
$$
\sup_{t\in [a,b]}|R_{\psi}(V_{0,L}+t)|~\overset{\mathbb{P}}{\to}~0,\quad L\to+\infty,
$$
for every fixed $0<a<b$. For the principal term $\psi_{ap}$  we have the following generalization of Wintner's result.
\begin{prop}\label{prop:mangoldt_functional_B2}
Assume (RH). As $L\to+\infty$, the random processes $\psi_{ap}(V_{0,L}+\cdot)$ converge in distribution on the space $B_2(\mathcal{Z}_{\zeta})$ (and, therefore, on the space $B_2$) to the stationary random process $\mathcal{M}_{\psi_{ap}}$.
\end{prop}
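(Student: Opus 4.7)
The plan is to reduce the statement to a direct application of Corollary~\ref{cor:one_sided_convergence_in_B2} by verifying its sole hypothesis, namely that $\psi_{ap} \in B_2$ with an explicitly identified Fourier spectrum and Fourier coefficients. Most of the work has in fact already been done in the paragraph preceding the proposition.

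First, I would recall that~\eqref{eq:counting_inverse_zeros} with $s=1$ gives $\sum_{\rho\in \mathcal{Z}_{\zeta}} |\rho|^{-2} < \infty$, so that $(1/\rho_k)_{k\in\Za}$ is a square-summable sequence indexed by the (two-sided) sequence of imaginary parts $(\gamma_k)_{k\in\Za}$. By the Riesz--Fischer theorem in the Besicovitch setting (p.~110 of~\cite{Besicovitch}), there is a unique element of $B_2$ whose formal Fourier series is $\sum_{k\in\Za} (1/\rho_k)\ee^{\ii \gamma_k t}$, and one checks that it coincides (in $B_2$, i.e.\ modulo $Z$) with the symmetric-limit sum defining $\psi_{ap}$ in~\eqref{eq:von_mangoldt_formula_normalized}. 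Hence
$$
\mathcal{S}(\psi_{ap}) = \{\gamma_k : k \in \Za\}, \qquad F_{\psi_{ap}}(\gamma_k) = \frac{1}{\rho_k}, \quad k\in\Za,
$$
and we identify $B_2(\mathcal{Z}_\zeta)$ with $B_2(\mathcal{S}(\psi_{ap}))$ via the bijection $\rho_k \leftrightarrow \gamma_k$.

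Second, the hypotheses of Theorem~\ref{thm:conv_in_B2} (and therefore of Corollary~\ref{cor:one_sided_convergence_in_B2}) are met with $f=\psi_{ap}$. Applying the corollary yields
$$
\psi_{ap}(V_{0,L}+\cdot) ~\Longrightarrow~ \mathcal{M}_{\psi_{ap}}, \qquad L\to+\infty,
$$
in $B_2(\mathcal{S}(\psi_{ap}))=B_2(\mathcal{Z}_\zeta)$, where $\mathcal{M}_{\psi_{ap}}$ is the random element of $B_2$ whose formal Fourier series is $\sum_{k\in\Za} \mathcal{V}_{\gamma_k}\rho_k^{-1}\ee^{\ii \gamma_k t}$, with $(\mathcal{V}_{\gamma_k})_{k\in\Za}$ distributed as the weak limit of $(\ee^{\ii \gamma_k V_{-L,L}})_{k\in\Za}$ on the torus $\mathbb{T}_{\psi_{ap}}$. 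Convergence on $B_2$ follows because $B_2(\mathcal{Z}_\zeta)$ is a closed subspace of $B_2$ and the embedding is continuous. Note that no arithmetic assumption beyond (RH) is required here: linear independence of the ordinates $\gamma_k$ over $\mathbb{Q}$ enters only when one wishes to describe $\mathcal{M}_{\psi_{ap}}$ via independent uniform variables $\mathcal{U}_k^{(c)}$, which is not part of the present claim.

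The only non-trivial point is the identification of the Fourier series of the pointwise-defined $\psi_{ap}$ with the Riesz--Fischer representative. This is the step that invokes the delicate conditional convergence of the explicit formula. I would handle it by truncating at $|\gamma_k|\le T$, computing the $M_2$-distance between the truncation and the Riesz--Fischer limit directly via Parseval, and then observing that the tails of the conditionally convergent series also vanish in $M_2$-seminorm thanks to~\eqref{eq:counting_inverse_zeros}. With this minor verification in place the proposition is an immediate corollary of Corollary~\ref{cor:one_sided_convergence_in_B2}.
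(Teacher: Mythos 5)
Your proposal is correct and follows essentially the same route as the paper: establish that $\psi_{ap}\in B_2$ with spectrum $\mathcal{Z}_{\zeta}$ and square-summable Fourier coefficients $1/\rho_k$ via~\eqref{eq:counting_inverse_zeros} and the Riesz--Fischer theorem, then apply Corollary~\ref{cor:one_sided_convergence_in_B2}; your added remark that~\eqref{eq:assumption_li} is not needed here, and your sketch of identifying the conditionally convergent symmetric partial sums with the Riesz--Fischer representative in $M_2$, match (and slightly elaborate on) what the paper leaves implicit.
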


The following hypothesis
\begin{equation}\label{eq:assumption_li}
\text{\em{The set }}\mathcal{I}_{\zeta}\text{ \em{is linearly independent over }}\Q,
\end{equation}
is widely believed to be true but its proof seems to be currently out of reach. Under this hypothesis,~\eqref{eq:lin_independence_gen} holds true with $\mathcal{C}=\mathcal{Z}_{\zeta}^{+}$ and $\mathcal{R}=\varnothing$. Moreover, the Fourier coefficients satisfy $\rho_{-k}=\overline{\rho_k}$ for $k\geq 1$. The process $\mathcal{M}_{\psi_{ap}}$ turns out to be a.s.~continuous, see Figure~\ref{fig:ChebMang} for a sample path realisation and Figure~\ref{fig:ChebMang_corr} for the correlation function, which is a Bohr almost periodic function.

\begin{prop}
Assume (RH) and~\eqref{eq:assumption_li}. Then, for all $t\in\mathbb{R}$, the limit process in Proposition~\ref{prop:mangoldt_functional_B2} has the Fourier expansion
$$
\mathcal{M}_{\psi_{ap}}(t)=\sum_{k\geq 1}(\rho_k^{-1}\mathcal{U}_k\ee^{\ii \gamma_k t}+\overline{\rho_k}^{-1}\overline{\mathcal{U}_k}\ee^{-\ii \gamma_k t})=2\Re\left(\sum_{k\geq 1}\rho_k^{-1}\mathcal{U}_k\ee^{\ii \gamma_k t}\right),
$$
which converges a.s. almost everywhere with $(\mathcal{U}_{k})_{k\geq 1}$ being independent random variables with the uniform distribution on the unit circle. The process $\mathcal{M}_{\psi_{ap}}$ has $\delta$-H\"{o}lder-continuous paths for every $\delta<1/2$. Moreover, $(\psi_{ap}(V_{0,L}+t))_{t\in\mathbb{R}}$ converge, as $L\to\infty$, in the sense of finite-dimensional distributions to $(\mathcal{M}_{\psi_{ap}}(t))_{t\in\mathbb{R}}$.
\end{prop}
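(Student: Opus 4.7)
The plan is to read the proposition as a direct application of three results already in the paper: Theorem~\ref{thm:conv_in_B2} (together with the explicit description of the limiting collection $\mathcal{V}$ given just before it) for the Fourier expansion, Proposition~\ref{thm:continuity} for the H\"older regularity, and Corollary~\ref{cor:fidis_convergence} adapted to the one-sided average for the finite-dimensional convergence.

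First I would check that $\psi_{ap}$ fits into the framework of~\eqref{eq:lin_independence_gen} under (RH) and~\eqref{eq:assumption_li}. The spectrum equals $\mathcal{S}(\psi_{ap})=\mathcal{I}_\zeta\cup(-\mathcal{I}_\zeta)$ and is linearly independent over $\mathbb{Q}$ up to the opposite pairs $\pm\gamma_k$, so~\eqref{eq:lin_independence_gen} is in force with $\mathcal{C}=\mathcal{I}_\zeta$ and $\mathcal{R}=\varnothing$; the identity $\rho_{-k}=\overline{\rho_k}$ gives the conjugation symmetry~\eqref{eq:symmetric_coeffs}. Specialising Theorem~\ref{thm:conv_in_B2} to this setting yields exactly the displayed formula for $\mathcal{M}_{\psi_{ap}}$. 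Almost-sure convergence for each fixed $t$ follows from Kolmogorov's two-series theorem together with $\sum_{k\geq 1}|\rho_k|^{-2}<\infty$ (a consequence of~\eqref{eq:counting_inverse_zeros} with $s=1$), while a.s.\ convergence for a.e.\ $t$ is the Jessen result quoted in the paragraph just before Theorem~\ref{thm:ergodic}.

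For the H\"older regularity I would verify the two summability hypotheses of Proposition~\ref{thm:continuity} using the counting asymptotics~\eqref{eq:counting_zeros} and~\eqref{eq:counting_inverse_zeros}. Since $|f_{\pm\gamma_k}|^2=|\rho_k|^{-2}$ and $\gamma_k^2/|\rho_k|^2\leq 1$,
$$
\sum_{\lambda\in\mathcal{S}(\psi_{ap})}\lambda^2|f_\lambda|^2\1_{\{|\lambda|\leq x\}}=O(x\log x)=O(x^\alpha)\quad\text{for every }\alpha\in(1,2),
$$
and~\eqref{eq:counting_inverse_zeros} with $s=1$ gives
$$
\sum_{\lambda\in\mathcal{S}(\psi_{ap})}|f_\lambda|^2\1_{\{|\lambda|>x\}}=O((\log x)/x)=O(x^{-\beta})\quad\text{for every }\beta\in(0,1).
$$
Proposition~\ref{thm:continuity} then produces a locally $\delta$-H\"older version for every $\delta<\min(1-\alpha/2,\beta/2)$; letting $\alpha\downarrow 1$ and $\beta\uparrow 1$ covers every $\delta<1/2$.

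Finally, Corollary~\ref{cor:fidis_convergence} gives the finite-dimensional convergence for the two-sided shift $V_{-L,L}$. To pass to $V_{0,L}$ I would run the conditioning argument of Corollary~\ref{cor:one_sided_convergence_in_B2}: the conditional law of the vector $(\psi_{ap}(V_{-L,L}+t_i))_{i=1}^m$ given $V_{-1,1}\geq 0$ coincides with the law of $(\psi_{ap}(V_{0,L}+t_i))_{i=1}^m$, and joint convergence of this vector together with $V_{-1,1}$ to $(\mathcal{M}_{\psi_{ap}}(t_1),\ldots,\mathcal{M}_{\psi_{ap}}(t_m),\mathcal{U})$ with $\mathcal{U}$ \emph{independent} of the limit process can be obtained by coupling the Cram\'er--Wold reduction of Corollary~\ref{cor:fidis_convergence} with the independence output of Theorem~\ref{cor:conv_in_B2_one_sided}. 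The conditioning is then removed exactly as in the proof of Corollary~\ref{cor:one_sided_convergence_in_B2}. The whole argument is essentially bookkeeping; the only mildly delicate point is this last independence statement, but that is precisely what Theorem~\ref{cor:conv_in_B2_one_sided} was designed to supply.
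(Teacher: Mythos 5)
Your proposal is correct and follows essentially the same route as the paper: the paper's proof is a two-line application of Proposition~\ref{thm:continuity}, verifying~\eqref{eq:lambda_summatory_cond} with any $\alpha>1$ via~\eqref{eq:counting_zeros} and~\eqref{eq:a_summatory_cond} with any $\beta<1$ via~\eqref{eq:counting_inverse_zeros}, exactly as you do, with the Fourier expansion and finite-dimensional convergence taken as immediate from the general framework. Your extra care in passing from $V_{-L,L}$ to $V_{0,L}$ via the conditioning/independence argument of Theorem~\ref{cor:conv_in_B2_one_sided} is a detail the paper leaves implicit, and it is handled correctly.
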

\begin{proof}
The claim follows from Proposition~\ref{thm:continuity}. Indeed, in view of~\eqref{eq:counting_zeros}, formula~\eqref{eq:lambda_summatory_cond} holds with any $\alpha>1$, whereas~\eqref{eq:counting_inverse_zeros} implies that~\eqref{eq:a_summatory_cond} is true for any $\beta<1$.
\end{proof}
\begin{rem}
Note that condition~\eqref{eq:hunt_continuous_suff} is also satisfied, thereby ensuring a.s.~continuity of $\mathcal{M}_{\psi_{ap}}$ without the H\"{o}lder property.
\end{rem}
According to Theorem 25c in~\cite{Ingham}, asymptotic relation~\eqref{eq:counting_zeros} can be inverted to get
\begin{equation}\label{eq:ingham_gamma_growth}
\gamma_n~\sim~2\pi n/\log n,\quad n\to+\infty.
\end{equation}
Therefore, we are in the setting of Example~\ref{exmaple:tangent} with $\mathcal{C}=\mathcal{I}_{\zeta}$, $\mathfrak{a}=\mathfrak{b}=1$, which means that Theorem~\ref{thm:tangent} holds true for the process $\mathcal{M}_{\psi_{ap}}$ and the limit process $B_{1-\theta/2}=B_{1/2}$ is a standard Brownian motion.

\begin{figure}
\centering
\includegraphics[width=0.7\textwidth]{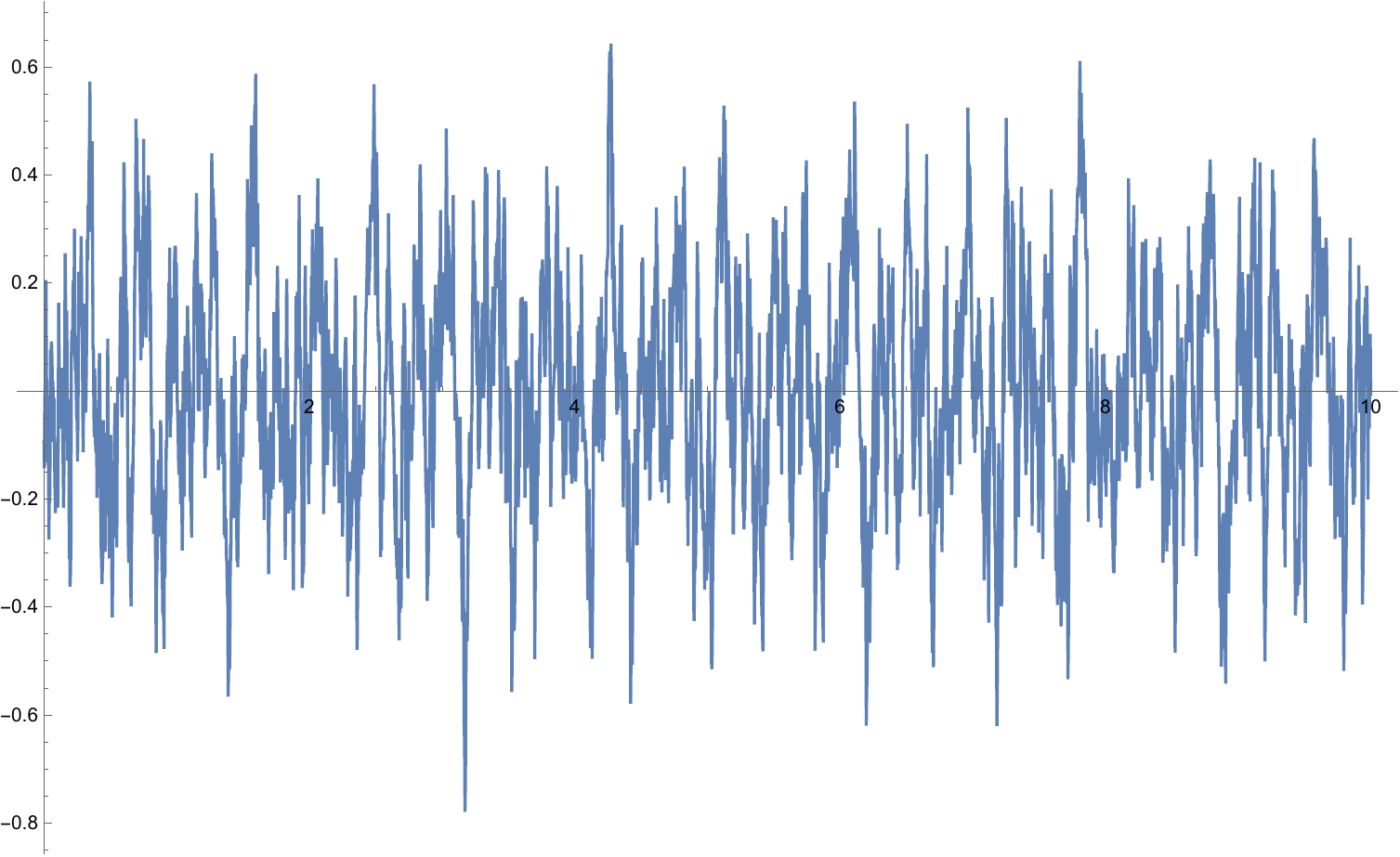}
\caption{Realisation of the process $\mathcal{M}_{\psi_{ap}}(t)=2\Re(\sum_{k\geq 1}\rho_k^{-1}\mathcal{U}_k\ee^{\ii \gamma_k t})$, $t\in [0,10]$, appearing in Section~\ref{subsec:mangoldt}.}
    \label{fig:ChebMang}
\end{figure}

\begin{figure}
\centering
\includegraphics[width=0.7\textwidth]{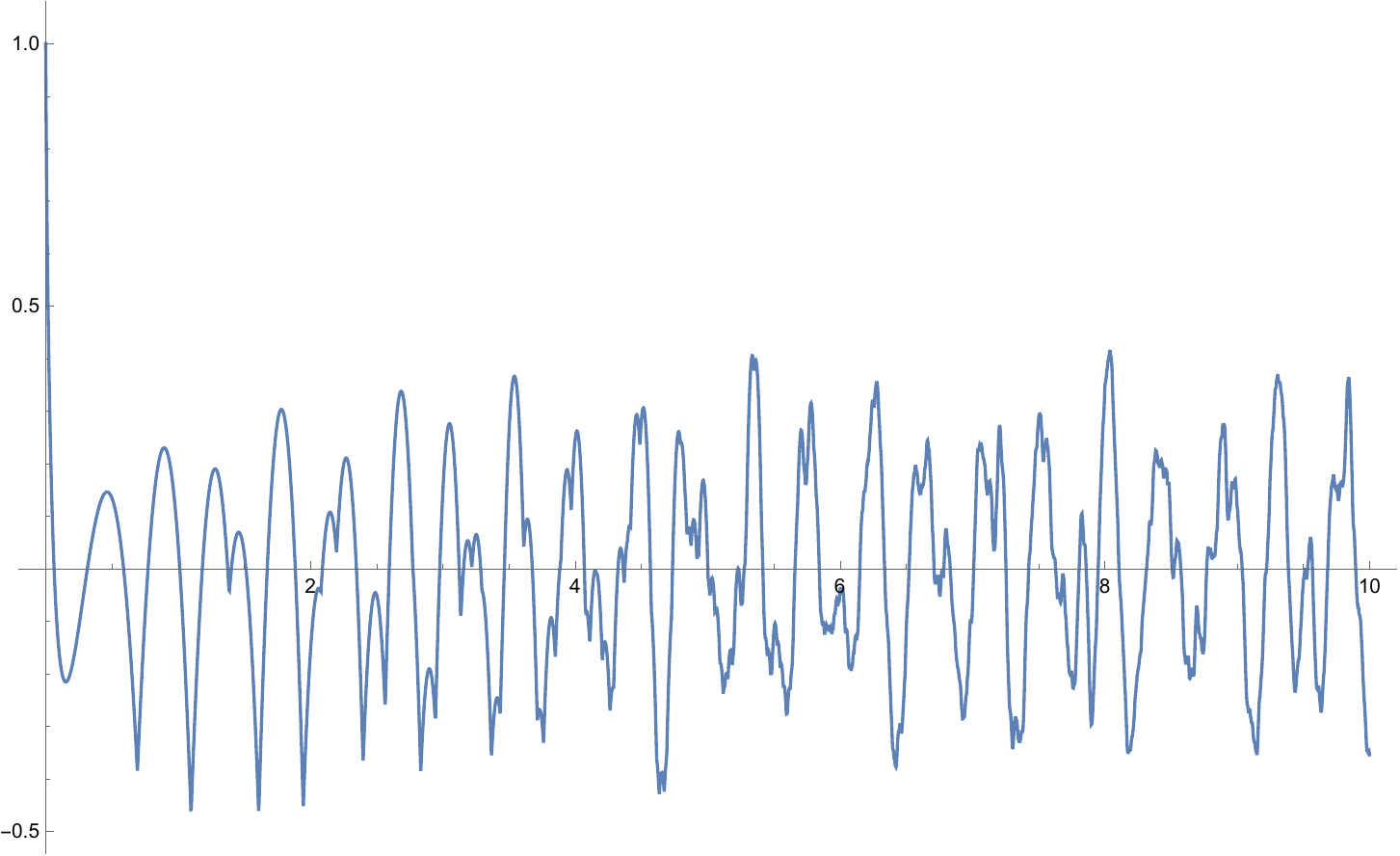}
\caption{Correlation function of the process $\mathcal{M}_{\psi_{ap}}$ appearing in Section~\ref{subsec:mangoldt}, for $t\in [0,10]$.}
    \label{fig:ChebMang_corr}
\end{figure}

\subsection{The M\"{o}bius and Mertens functions}\label{sec:mobius}
An analogue of Wintner's analysis of the Chebyshev function remainder has been carried out for the M\"{o}bius summatory function in~\cite{Ng:2004}. Recall that the M\"{o}bius function $\mu:\N\to\{-1,0,1\}$ is defined by
$$
\mu(n)=
\begin{cases}
1,&\text{if }n=1,\\
0,&\text{if }n\text{ is not square-free},\\
(-1)^k,&\text{if }n=p_1 p_2\cdots p_k\text{ for pair-wise distinct primes }p_1,\ldots,p_k. 
\\
\end{cases}
$$
The summatory function $M(x)=\sum_{n\leq x}\mu(n)$ is called the Mertens function. A counterpart of~\eqref{eq:zeta_recipro-chebyshev_inverse} can be derived by noting that
\begin{equation}\label{eq:zeta_recipro-mertens}
\frac{1}{\zeta(s)}=\prod_{p\in\mathcal{P}}(1-p^{-s})=\sum_{n=1}^{\infty}\frac{\mu(n)}{n^s}=s\int_1^{\infty}\frac{M(x)}{x^{s+1}}\d x,\quad \Re(s)>1,
\end{equation}
which by Perron's inversion yields
\begin{equation}\label{eq:zeta_recipro-mertens_inverse}
\widetilde{M}(x)=\frac{1}{2\pi\ii}\int_{c-\ii\infty}^{c+\ii\infty}\frac{x^s}{s\zeta(s)}\d s,\quad x>1,
\end{equation}
where $c>1$ is an arbitrary fixed constant. Here, 
$\widetilde{M}(x)=(M(x+0)+M(x-0))/2$. Assuming (RH) and also that all non-trivial zeros of $\zeta$ are simple, a residue expansion of the right-hand side in~\eqref{eq:zeta_recipro-mertens_inverse} yields
\begin{equation}\label{eq:mertens_via_zeros}
\frac{\widetilde{M}(x)}{\sqrt{x}}=\sum_{k\in\Za}\frac{x^{\ii\gamma_k}}{\rho_k\zeta'(\rho_k)}-\frac{2}{\sqrt{x}}+\sum_{n\geq 1}\frac{(-1)^{n-1}(2\pi)^{2n}x^{-2n-1/2}}{(2n)!n\zeta(2n+1)},\quad x>1,
\end{equation}
see Theorem 14.27 in~\cite{Titchmarsh:1986}. The series representing the first term on the right-hand side of~\eqref{eq:mertens_via_zeros} does not converge absolutely and is understood as the limit of the partial sums $\sum_{k\in\Za:|\gamma_k|\leq T}\frac{x^{\ii\gamma_k}}{\rho_k\zeta'(\rho_k)}$ along an appropriate sequence $T=T_n$ diverging to $+\infty$. Replacing $x$ with $\ee^{t}$, we obtain
$$
\widehat{M}(t):=\ee^{-t/2}\widetilde{M}(\ee^t)=:M_{ap}(t)+R_{M}(t),\quad t>0,
$$
where $M_{ap}(t)=\sum_{k\in\Za}\frac{\ee^{\ii \gamma_k t}}{\rho_k\zeta'(\rho_k)}$ is an almost periodic part and $R_{M}$ is the remainder.

In contrast with the case of the Chebyshev function, the condition of square summability of the Fourier coefficients which would guarantee that $M_{ap}\in B_2$, namely,
\begin{equation}\label{eq:mertens_square_summability}
\sum_{k\in\Za}\frac{1}{|\rho_k\zeta'(\rho_k)|^2}=2\sum_{k\geq 1}\frac{1}{|\rho_k\zeta'(\rho_k)|^2}<\infty,
\end{equation}
does not hold a priori. Hence, it has to be postulated for our purposes. A known sufficient condition for~\eqref{eq:mertens_square_summability} is the weak Mertens hypothesis
\begin{equation}\label{eq:mertens_square_summability_suff1}
\int_{0}^{T}\left(\frac{M(x)}{x}\right)^2{\rm d}x=O(\log T),\quad T\to+\infty,
\end{equation}
see Theorem 14.29(B) in~\cite{Titchmarsh:1986}. Another sufficient condition which will also play an important role in what follows is the weak Gonek conjecture
\begin{equation}\label{eq:gonek}
J_{-1}(T)=\sum_{k\in\Za:|\gamma_k|\leq T}\frac{1}{|\zeta'(\rho_k)|^2}=O(T),\quad T\to +\infty,
\end{equation}
see~\cite{Gonek} and also the discussion in the introduction to~\cite{Ng:2004}. It is clear that the weak Gonek
conjecture implicitly assumes that all zeros on the critical line are simple. Furthermore, according to Lemma 1 in~\cite{Ng:2004}, it entails~\eqref{eq:mertens_square_summability_suff1} and hence~\eqref{eq:mertens_square_summability}. Assuming (RH) and~\eqref{eq:gonek}, Theorem 2 in~\cite{Ng:2004} states that $\widehat{M}(V_{0,L})$ converges in distribution\footnote{Note that the remainder $R_M$ clearly satisfies $R_M(t)\to 0$, as $t\to+\infty$.}. Here is a functional version of this result in the space $B_2$, which follows immediately from Corollary~\ref{cor:one_sided_convergence_in_B2}.

\begin{prop}\label{prop:mertens_functional_B2}
Assume (RH) and~\eqref{eq:mertens_square_summability}. As $L\to+\infty$, the random processes $M_{ap}(V_{0,L}+\cdot)$ converge in distribution on the space $B_2(\mathcal{Z}_{\zeta})$ (and, therefore, on the space $B_2$) to the stationary random process $\mathcal{M}_{M_{ap}}$. If, in addition,~\eqref{eq:assumption_li} holds true, then $(M_{ap}(V_{0,L}+t))_{t\in\mathbb{R}}$ converge to
$(\mathcal{M}_{M_{ap}}(t))_{t\in\mathbb{R}}$ also in the sense of finite-dimensional distributions.
\end{prop}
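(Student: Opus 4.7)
The first claim reduces to a direct application of Corollary~\ref{cor:one_sided_convergence_in_B2} once we verify that $M_{ap}$ is a well-defined element of $B_2$ with Fourier spectrum contained in $\mathcal{I}_\zeta\cup(-\mathcal{I}_\zeta)$. This mirrors the treatment of $\psi_{ap}$ in Section~\ref{subsec:mangoldt}: the square-summability hypothesis~\eqref{eq:mertens_square_summability} and the Riesz--Fischer theorem guarantee that the formal series $\sum_{k\in\Za}\tfrac{1}{\rho_k\zeta'(\rho_k)}\ee^{\ii\gamma_k t}$ uniquely defines an element of $B_2$ with Fourier coefficients $f_{\gamma_k}=1/(\rho_k\zeta'(\rho_k))$, and this element is precisely $M_{ap}$. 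Invoking Corollary~\ref{cor:one_sided_convergence_in_B2} with $f=M_{ap}$ yields convergence in distribution on $B_2(\mathcal{S}(M_{ap}))\subseteq B_2(\mathcal{Z}_\zeta)$.

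For the second claim, I would first check that $M_{ap}$ satisfies~\eqref{eq:lin_independence_gen} under~\eqref{eq:assumption_li}. The spectrum $\mathcal{I}_\zeta\cup(-\mathcal{I}_\zeta)$ has no finite accumulation point by~\eqref{eq:counting_zeros}; it excludes $0$ by the paper's very definition $\mathcal{I}_\zeta=\{\gamma>0:\zeta(1/2+\ii\gamma)=0\}$; and $\mathbb{Q}$-linear independence of $\mathcal{I}_\zeta$ delivers the decomposition $\mathcal{S}(M_{ap})=\mathcal{C}\cup(-\mathcal{C})$ with $\mathcal{C}=\mathcal{I}_\zeta$ and $\mathcal{R}=\varnothing$. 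Corollary~\ref{cor:fidis_convergence} then gives the convergence of finite-dimensional distributions of $M_{ap}(V_{-L,L}+\cdot)$ to those of $\mathcal{M}_{M_{ap}}$.

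It remains to upgrade the finite-dimensional convergence from the two-sided averages $V_{-L,L}$ to the one-sided averages $V_{0,L}$. I would replay the Cram\'er--Wold reduction from the proof of Corollary~\ref{cor:fidis_convergence}: for arbitrary $\alpha_1,\ldots,\alpha_m\in\mathbb{C}$ and $t_1,\ldots,t_m\in\mathbb{R}$ the function $\tilde{f}(x):=\sum_{i=1}^{m}\alpha_i M_{ap}(x+t_i)$ lies in $B_2(\mathcal{S}(M_{ap}))$, so it suffices to show $\tilde{f}(V_{0,L})\todistr\mathcal{M}_{\tilde{f}}(0)$. A joint convergence statement $(\tilde{f}(V_{-L,L}),V_{-1,1})\Rightarrow(\mathcal{M}_{\tilde{f}}(0),\mathcal{U})$ with independent components, obtained by a Fourier-transform calculation on $\mathbb{T}_{\tilde{f}}\times\mathbb{R}$ identical in spirit to the one in the proof of Theorem~\ref{cor:conv_in_B2_one_sided}, combined with conditioning on $V_{-1,1}\geq 0$ as in the proof of Corollary~\ref{cor:one_sided_convergence_in_B2}, produces the claim, since $\mathcal{M}_{\tilde{f}}(0)=\sum_i\alpha_i\mathcal{M}_{M_{ap}}(t_i)$ by linearity of the Fourier transform. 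The only non-routine bookkeeping is assembling this joint convergence, which is not isolated as a lemma in the paper but is an immediate variant of Theorem~\ref{cor:conv_in_B2_one_sided}; no genuine obstacle arises, and the proof is a clean recombination of the material in Section~\ref{sec:ap_processes}.
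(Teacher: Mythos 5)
Your proposal is correct and follows the same route as the paper, which gives no separate proof but simply asserts that the proposition ``follows immediately from Corollary~\ref{cor:one_sided_convergence_in_B2}'' (for the $B_2$ convergence) and, implicitly, from Corollary~\ref{cor:fidis_convergence} (for the finite-dimensional distributions). You supply more detail than the paper does --- notably the Riesz--Fischer verification that $M_{ap}\in B_2$ under~\eqref{eq:mertens_square_summability}, the check of~\eqref{eq:lin_independence_gen} under~\eqref{eq:assumption_li}, and the one-sided analogue of the fidi convergence obtained by conditioning on $V_{-1,1}\geq 0$ --- but no new ideas are involved.
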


Under the assumptions of Proposition \ref{prop:mertens_functional_B2}, including~\eqref{eq:assumption_li}, 
the random Fourier series
$$
\mathcal{M}_{M_{ap}}(t)=\sum_{k\geq 1}\left((\rho_k\zeta'(\rho_k))^{-1}\mathcal{U}_k\ee^{\ii \gamma_k t}+(\overline{\rho_k\zeta'(\rho_k)})^{-1}\overline{\mathcal{U}_k}\ee^{-\ii \gamma_k t}\right)=2\Re\left(\sum_{k\geq 1}(\rho_k\zeta'(\rho_k))^{-1}\mathcal{U}_k\ee^{\ii \gamma_k t}\right)
$$
converges a.s.~almost everywhere. Furthermore, in this case, Proposition~\ref{prop:mertens_functional_B2} recovers Theorem 2 in~\cite{Ng:2004} because
$$
\sup_{t\in [a,b]}|R_M(V_{0,L}+t)|\overset{\mathbb{P}}{\to} 0,\quad L\to+\infty 
$$
for all $0<a<b$. If a slightly stronger version of~\eqref{eq:mertens_square_summability} holds true, namely,
\begin{equation}\label{eq:eq:mertens_square_summability_with_log}
\sum_{k\geq 1}\frac{\log_{+}^{1+\varepsilon}\gamma_k}{|\rho_k\zeta'(\rho_k)|^2}<\infty 
\end{equation}
for some $\varepsilon>0$, then $\mathcal{M}_{M_{ap}}$ is a.s.~continuous in accordance with~\eqref{eq:hunt_continuous_suff}. Assuming~\eqref{eq:gonek}, which is stronger than~\eqref{eq:eq:mertens_square_summability_with_log} as follows from the proof of formula
~\eqref{eq:below} given below, we deduce H\"{o}lder continuity of $\mathcal{M}_{M_{ap}}$.

\begin{prop}\label{prop:mertens_continuity_limit}
Assume (RH),~\eqref{eq:assumption_li} and~\eqref{eq:gonek}. The process $\mathcal{M}_{M_{ap}}$ is a.s.~$\delta$-H\"{o}lder-continuous with $\delta<1/2$.
\end{prop}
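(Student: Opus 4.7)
The plan is a direct application of Proposition~\ref{thm:continuity} to the function $f=M_{ap}$, whose spectrum is $\mathcal{S}(M_{ap})=\mathcal{Z}_\zeta$ and whose Fourier coefficients are $f_{\rho_k}=(\rho_k\zeta'(\rho_k))^{-1}$, $k\in\Za$. Under~\eqref{eq:assumption_li} assumption~\eqref{eq:lin_independence_gen} is met with $\mathcal{C}=\mathcal{Z}_\zeta^{+}$ and $\mathcal{R}=\varnothing$, so the only remaining task is to verify conditions~\eqref{eq:lambda_summatory_cond} and~\eqref{eq:a_summatory_cond} with $\alpha=\beta=1$; this will yield H\"{o}lder continuity of every order $\delta<\min(1-\alpha/2,\beta/2)=1/2$.

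For~\eqref{eq:lambda_summatory_cond}, I will use that $|\rho_k|^2=\tfrac14+\gamma_k^2$, so that $\gamma_k^2/|\rho_k|^2\le 1$. Therefore
\[
\sum_{\rho\in\mathcal{Z}_\zeta}\gamma^2|\rho\zeta'(\rho)|^{-2}\1_{\{|\gamma|\le x\}}
=\sum_{k\in\Za:|\gamma_k|\le x}\frac{\gamma_k^2}{|\rho_k|^2}\cdot\frac{1}{|\zeta'(\rho_k)|^2}
\le J_{-1}(x)=O(x)
\]
by the weak Gonek conjecture~\eqref{eq:gonek}, which gives $\alpha=1$.

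For~\eqref{eq:a_summatory_cond} I will apply Abel (partial) summation. Writing $A(t):=\sum_{k\in\Za:|\gamma_k|\le t}|\zeta'(\rho_k)|^{-2}=J_{-1}(t)=O(t)$, using that $|\rho_k|^2\asymp \gamma_k^2$ for $|\gamma_k|\ge 1$ and integrating by parts, for any $Y>x$ large enough,
\[
\sum_{x<|\gamma_k|\le Y}\frac{1}{|\rho_k\zeta'(\rho_k)|^2}
\ll \frac{A(Y)}{Y^2}-\frac{A(x)}{x^2}+\int_x^Y\frac{2A(t)}{t^3}\d t.
\]
Letting $Y\to\infty$ and using $A(t)=O(t)$ yields $\sum_{|\gamma_k|>x}|\rho_k\zeta'(\rho_k)|^{-2}=O(x^{-1})$, i.e.~$\beta=1$. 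The same computation, with an extra factor $\log_+^{1+\varepsilon}t$ in the integrand, gives~\eqref{eq:eq:mertens_square_summability_with_log} and is what the text refers to as the ``proof of formula~\eqref{eq:below}''.

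Plugging $\alpha=\beta=1$ into Proposition~\ref{thm:continuity} furnishes a version of $\mathcal{M}_{M_{ap}}$ with locally $\delta$-H\"{o}lder continuous paths for every $\delta<1/2$, which completes the proof. The only non-routine ingredient is the Gonek bound~\eqref{eq:gonek}, which is assumed; everything else is elementary partial summation and the elementary inequality $\gamma_k^2\le |\rho_k|^2$.
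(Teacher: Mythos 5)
Your proposal is correct and follows the same overall strategy as the paper: apply Proposition~\ref{thm:continuity} with $\alpha=\beta=1$, and verify~\eqref{eq:lambda_summatory_cond} exactly as the paper does, via $\gamma_k^2\le|\rho_k|^2$ and the weak Gonek bound $J_{-1}(x)=O(x)$. The only point of divergence is the verification of~\eqref{eq:a_summatory_cond}: the paper splits the tail into dyadic blocks $2^{j-1}x<\gamma_k\le 2^jx$ and invokes Lemma~1(ii) of~\cite{Ng:2004} for the bound $K_1/(2^{j-1}x)$ on each block, whereas you obtain the same $O(x^{-1})$ tail directly by Abel summation against $A(t)=J_{-1}(t)=O(t)$ (using $|\rho_k|^2\ge\gamma_k^2$, so the boundary term $-A(x)/x^2$ and the integral $\int_x^\infty A(t)t^{-3}\,\d t$ are both $O(x^{-1})$). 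The two computations are essentially equivalent --- the cited lemma is itself a dyadic repackaging of the same partial summation --- but your version is more self-contained, deriving everything from the single hypothesis~\eqref{eq:gonek} without the external reference, and it transparently yields~\eqref{eq:eq:mertens_square_summability_with_log} with the extra factor $\log_{+}^{1+\varepsilon}t$, which the paper instead establishes by the separate dyadic computation~\eqref{eq:below}.
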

\begin{proof}We apply Proposition~\ref{thm:continuity}. The left-hand side of~\eqref{eq:lambda_summatory_cond} reads
\begin{multline*}
\sum_{k\in\Za}\frac{\gamma_k^2}{|\rho_k\zeta'(\rho_k)|^2}\1_{\{|\gamma_k|\leq x\}}=2\sum_{k\geq 1}\frac{\gamma_k^2}{|\rho_k\zeta'(\rho_k)|^2}\1_{\gamma_k \leq x\}}\\
\leq 2\sum_{k\geq 1}\frac{1}{|\zeta'(\rho_k)|^2}\1_{\{\gamma_k
\leq x\}}=J_{-1}(x)=O(x),\quad x\to+\infty,
\end{multline*}
where the last $O$-estimate is just the weak Gonek conjecture~\eqref{eq:gonek}. The left-hand side of~\eqref{eq:a_summatory_cond} in the present situation is
$$
\sum_{k\in\Za}\frac{1}{|\rho_k\zeta'(\rho_k)|^2}\1_{\{|\gamma_k|>x\}}=2\sum_{k\geq 1}\frac{1}{|\rho_k\zeta'(\rho_k)|^2}\1_{\{\gamma_k>x\}}=\sum_{j\geq 1}\sum_{k\geq 1}\frac{1}{|\rho_k\zeta'(\rho_k)|^2}\1_{\{2^{j-1} x<\gamma_k \leq 2^{j}x\}}.
$$
By Lemma 1(ii) in~\cite{Ng:2004},~\eqref{eq:gonek} ensures that there exists a constant $K_1>0$ such that
\begin{equation}\label{eq:gonek2}
\sum_{k\geq 1}\frac{1}{|\rho_k\zeta'(\rho_k)|^2}\1_{\{2^{j-1} x< \gamma_k \leq 2^{j}x\}}\leq \frac{K_1}{2^{j-1}x} 
\end{equation}
for all $x$ large enough and $j\geq 1$. Summing over $j\geq 1$ yields
$$
\sum_{k\geq 1}\frac{1}{|\rho_k\zeta'(\rho_k)|^2}\1_{\{\gamma_k >x\}}=O(1/x),\quad x\to\infty.
$$
In order to see that~\eqref{eq:gonek2} (hence~\eqref{eq:gonek}) entails~\eqref{eq:eq:mertens_square_summability_with_log} observe that 
\begin{multline}\label{eq:below}
\sum_{k\geq 1}\frac{\log_{+}^{1+\varepsilon}\gamma_k}{|\rho_k\zeta'(\rho_k)|^2}=\sum_{j\geq 1}\sum_{k\geq 1}\frac{\log_{+}^{1+\varepsilon}\gamma_k}{|\rho_k\zeta'(\rho_k)|^2}\1_{\{2^{j-1}< \gamma_k\leq 2^{j}\}}\\
\leq \sum_{j\geq 1} \log_{+}^{1+\varepsilon}2^j \sum_{k\geq 1}\frac{1}{|\rho_k\zeta'(\rho_k)|^2}\1_{\{2^{j-1}< \gamma_k\leq 2^{j}\}}\leq 2K_1\sum_{j\geq 1}\frac{\log_{+}^{1+\varepsilon}2^j}{2^j}<\infty.
\end{multline}
\end{proof}

\subsection{The Liouville function and its partial sums}

Let $\lambda:\mathbb{N}\mapsto \{-1,+1\}$ be the Liouville function defined as $\lambda(n)=1$  if $n$
is the product of an even number of primes, and $\lambda(n)=-1$ if it is the product of an odd number of primes,
counted with multiplicities. Put $L(x):=\sum_{n\leq x}\lambda(n)$ and $\widetilde{L}(x):=(L(x+0)+L(x-0))/2$.

Embarking on the relation
$$
\frac{\zeta(2s)}{\zeta(s)}=\prod_{p\in\mathcal{P}}\frac{1}{1+p^{-s}}=\prod_{p\in\mathcal{P}}\sum_{k\geq 0}\frac{(-1)^k}{p^{sk}}=\sum_{n\geq 1}\frac{\lambda(n)}{n^s},\quad \Re(s)>1
$$
and using again Perron's inversion with the subsequent residue decomposition one can check that under (RH)
$$
\widehat{L}(t):=\ee^{-t/2}\widetilde{L}(\ee^t)=\frac{1}{\zeta(1/2)}+\sum_{k\in\Za}\frac{\zeta(2\rho_k)}{\rho_k\zeta'(\rho_k)}\ee^{\ii \gamma_k t}+R_{L}(\ee^t),\quad t>0 
$$
for an appropriate remainder $R_L$ satisfying $R_L(x)\to 0$, as $x\to+\infty$, see, for example, Eq.~(4.21) in~\cite{Akbary+Ng+Shahabi:2014}. Like similar sums discussed earlier,
the sum $L_{ap}(t):=\sum_{k\in\Za}\frac{\zeta(2\rho_k)}{\rho_k\zeta'(\rho_k)}\ee^{\ii \gamma_k t}$ is understood as the limit of
$\sum_{k\in\Za:|\gamma_k|\leq T}\frac{\zeta(2\rho_k)}{\rho_k\zeta'(\rho_k)}\ee^{\ii \gamma_k t}$ along an appropriate sequence $T=T_n$ diverging to infinity.

Invoking 
\begin{equation}\label{eq:zeta_on_the_line1}
\zeta(2\rho_k)=\zeta(1+2\gamma_k)=O((\log \gamma_k \log\log \gamma_k)^{3/4}),\quad k\to\infty,
\end{equation}
see Theorem 6.14 in~\cite{Titchmarsh:1986}, and the results of Section~\ref{sec:mobius} we conclude the following.

\begin{prop}\label{prop:Liouville}
Assume (RH) and~\eqref{eq:gonek}. As $L\to+\infty$, the random processes $L_{ap}(V_{0,L}+\cdot)$ converge in distribution on the space $B_2(\mathcal{Z}_{\zeta})$ (and, therefore, on the space $B_2$) to the stationary random process $\mathcal{M}_{L_{ap}}$. If, in addition,~\eqref{eq:assumption_li} holds true, then $(L_{ap}(V_{0,L}+t))_{t\in\mathbb{R}}$ converge to
$(\mathcal{M}_{L_{ap}}(t))_{t\in\mathbb{R}}$ also in the sense of finite-dimensional distributions.
\end{prop}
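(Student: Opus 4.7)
The plan is to follow the same template used in the proof of Proposition~\ref{prop:mertens_functional_B2}, reducing everything to Corollary~\ref{cor:one_sided_convergence_in_B2} and Corollary~\ref{cor:fidis_convergence} applied with $f=L_{ap}$. The only non-automatic step is to verify that $L_{ap}$ actually lies in $B_2(\mathcal{Z}_\zeta)$, that is, that its formal Fourier coefficients $\{\zeta(2\rho_k)/(\rho_k\zeta'(\rho_k)):k\in\Za\}$ form a square-summable family. Once this is established, the Riesz--Fischer theorem identifies $L_{ap}$ with a genuine element of $B_2$ having the declared Fourier series and spectrum $\mathcal{Z}_\zeta$, and the $B_2(\mathcal{Z}_\zeta)$ convergence is then immediate from Corollary~\ref{cor:one_sided_convergence_in_B2}.

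To establish square-summability I would use the dyadic decomposition already deployed in~\eqref{eq:below}. Grouping zeros according to $\gamma_k\in(2^{j-1},2^j]$ for $j\geq 1$, the bound~\eqref{eq:zeta_on_the_line1} yields $|\zeta(2\rho_k)|^2=O((j\log j)^{3/2})$ uniformly on the $j$-th block, while Lemma 1(ii) in~\cite{Ng:2004} (the content of~\eqref{eq:gonek2} evaluated at $x=1$) provides
$$
\sum_{k\geq 1}\frac{1}{|\rho_k\zeta'(\rho_k)|^2}\1_{\{2^{j-1}<\gamma_k\leq 2^j\}}\leq \frac{K_1}{2^{j-1}}
$$
for all sufficiently large $j$. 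Multiplying the two estimates and summing over $j\geq 1$ produces a series with generic term $O((j\log j)^{3/2}\,2^{-j})$, which is clearly convergent. A trivial bound absorbs the finitely many small-$j$ terms.

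For the finite-dimensional statement under~\eqref{eq:assumption_li}, the remaining task is to check that the spectrum of $L_{ap}$ satisfies~\eqref{eq:lin_independence_gen}. All Fourier coefficients are nonzero because $2\rho_k=1+2\ii\gamma_k$ is never a trivial zero of $\zeta$ under (RH), so the spectrum equals $\{\pm\gamma_k:k\geq 1\}$; this set has no finite accumulation points, is symmetric around the origin, and admits the decomposition $\mathcal{C}=\mathcal{I}_\zeta$, $\mathcal{R}=\varnothing$, with $\mathcal{C}$ linearly independent over $\mathbb{Q}$ by~\eqref{eq:assumption_li}. Noting additionally that $\rho_{-k}=\overline{\rho_k}$ yields $f_{-\gamma_k}=\overline{f_{\gamma_k}}$, so Corollary~\ref{cor:fidis_convergence} applies and delivers the finite-dimensional convergence. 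The single genuine obstacle is the dyadic summability estimate, but it ultimately rests on the exponential factor $2^{-j}$ from~\eqref{eq:gonek2} swamping the polylogarithmic growth coming from~\eqref{eq:zeta_on_the_line1}; everything else is bookkeeping.
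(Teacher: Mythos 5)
Your proposal is correct and follows essentially the same route as the paper: the paper's own (very terse) proof likewise reduces the claim to the Mertens-function analysis of Section~\ref{sec:mobius} via Corollaries~\ref{cor:one_sided_convergence_in_B2} and~\ref{cor:fidis_convergence}, with the only new ingredient being that the extra factor $\zeta(2\rho_k)=O((\log\gamma_k\log\log\gamma_k)^{3/4})$ from~\eqref{eq:zeta_on_the_line1} is absorbed by exactly the dyadic estimate~\eqref{eq:gonek2}/\eqref{eq:below} you invoke. Your explicit block computation $O((j\log j)^{3/2}2^{-j})$ is just that argument written out, so there is nothing to add.
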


Under the assumptions of Proposition~\ref{prop:Liouville}, including~\eqref{eq:assumption_li}, the random Fourier series
$$
\mathcal{M}_{L_{ap}}(t)=2\Re\left(\sum_{k\geq 1}\frac{\zeta(2\rho_k)}{\rho_k\zeta'(\rho_k)}\mathcal{U}_{k}\ee^{\ii \gamma_k t}\right)
$$
converges a.s.~almost everywhere. Using~\eqref{eq:zeta_on_the_line1} and calculations similar to those in the proof of Proposition~\ref{prop:mertens_continuity_limit} we arrive at a proposition.
\begin{prop}
Assume (RH),~\eqref{eq:assumption_li} and~\eqref{eq:gonek}. The process $\mathcal{M}_{L_{ap}}$ is a.s.~$\gamma$-H\"{o}lder-continuous with $\gamma<1/2$.
\end{prop}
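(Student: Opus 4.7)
The plan is to mimic the proof of Proposition~\ref{prop:mertens_continuity_limit}, applying Proposition~\ref{thm:continuity} to $f=L_{ap}$. The Fourier spectrum is again $\mathcal{Z}_\zeta$ (enumerated by $\gamma_k$, $k\in\Za$), but now the coefficients are $f_{\rho_k}=\zeta(2\rho_k)/(\rho_k\zeta'(\rho_k))$. Hence the only new ingredient to push through is the bound~\eqref{eq:zeta_on_the_line1}, which gives $|\zeta(2\rho_k)|^2=O((\log\gamma_k\,\log\log\gamma_k)^{3/2})$. I would first verify condition~\eqref{eq:lambda_summatory_cond}: factoring out this polylogarithmic bound over $k$ with $\gamma_k\le x$ yields
\[
\sum_{k\in\Za}\gamma_k^2\frac{|\zeta(2\rho_k)|^2}{|\rho_k\zeta'(\rho_k)|^2}\1_{\{|\gamma_k|\le x\}}\le C(\log x\,\log\log x)^{3/2}\sum_{k\ge 1}\frac{1}{|\zeta'(\rho_k)|^2}\1_{\{\gamma_k\le x\}},
\]
and the weak Gonek conjecture~\eqref{eq:gonek} bounds the inner sum by $O(x)$. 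The total is therefore $O(x^{1+\varepsilon})$ for every $\varepsilon>0$, so~\eqref{eq:lambda_summatory_cond} holds with any $\alpha>1$.

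For condition~\eqref{eq:a_summatory_cond} I would use exactly the dyadic decomposition from the proof of Proposition~\ref{prop:mertens_continuity_limit}, namely
\[
\sum_{k\in\Za}\frac{|\zeta(2\rho_k)|^2}{|\rho_k\zeta'(\rho_k)|^2}\1_{\{|\gamma_k|>x\}}=2\sum_{j\ge 1}\sum_{k\ge 1}\frac{|\zeta(2\rho_k)|^2}{|\rho_k\zeta'(\rho_k)|^2}\1_{\{2^{j-1}x<\gamma_k\le 2^j x\}}.
\]
On the dyadic block $2^{j-1}x<\gamma_k\le 2^j x$, the polylogarithmic bound on $|\zeta(2\rho_k)|^2$ is at most $C(\log(2^j x)\log\log(2^j x))^{3/2}$, and by~\eqref{eq:gonek2} the inner sum is at most $K_1/(2^{j-1}x)$. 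Summing over $j\ge 1$ gives $O(x^{-1}(\log x)^{3/2}(\log\log x)^{3/2})=O(x^{-1+\varepsilon})$ for every $\varepsilon>0$, so~\eqref{eq:a_summatory_cond} holds with any $\beta<1$.

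Applying Proposition~\ref{thm:continuity} with $\alpha$ arbitrarily close to $1$ from above and $\beta$ arbitrarily close to $1$ from below, the allowed H\"older exponent $\gamma<\min(1-\alpha/2,\beta/2)$ sweeps out the entire interval $(0,1/2)$, yielding the claim. The only obstacle is purely bookkeeping: making sure the polylogarithmic prefactor $(\log x\,\log\log x)^{3/2}$ is absorbed into an arbitrarily small power $x^\varepsilon$, which costs us the ability to attain $\gamma=1/2$ but leaves every $\gamma<1/2$ accessible; essentially no new analytic input beyond~\eqref{eq:zeta_on_the_line1} and~\eqref{eq:gonek} is required.
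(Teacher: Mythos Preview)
Your proposal is correct and follows precisely the approach the paper indicates: the paper's own proof merely says ``Using~\eqref{eq:zeta_on_the_line1} and calculations similar to those in the proof of Proposition~\ref{prop:mertens_continuity_limit}'', and you have spelled out exactly those calculations, inserting the polylogarithmic factor $(\log x\,\log\log x)^{3/2}$ from~\eqref{eq:zeta_on_the_line1} into the two estimates~\eqref{eq:lambda_summatory_cond} and~\eqref{eq:a_summatory_cond} and absorbing it into an arbitrary $x^{\varepsilon}$. There is nothing to add.
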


\subsection{Further examples}
Assuming (RH) and~\eqref{eq:assumption_li} it was shown in~\cite{Rubinstein+Sarnak:1994} that the remainders in the prime counting functions in arithmetic progressions possess limiting distributions. The results of the present paper can be used to derive functional versions of that result.  Many further examples of this flavor can be found in~\cite{Akbary+Ng+Shahabi:2014}.

\section*{Acknowledgments}

The work of AI was supported by the National Research Foundation of Ukraine (project number 2023.03/0059 `Contribution to modern theory of random series'). AM gratefully acknowledges financial support from the Alexander von Humboldt Foundation. ZK has been supported by the German Research Foundation under Germany’s Excellence Strategy EXC 2044~-~ 390685587, Mathematics M\"{u}nster: Dynamics - Geometry - Structure and by the DFG priority program SPP 2265 Random Geometric Systems.

\bibliographystyle{amsplain}
\bibliography{Biblio.bib}

\end{document}